\newcommand{\cross}{\mathop{\backslash\mathllap{/}}}
\newcommand{\dbars}{\mathop{||}}
\newcommand{\tikzcross}[4]{
	%Draw the cross lines
	\draw[color=#3] (#1,#2)+(0,0) -- +(2,0.2);
	\draw[color=#4] (#1,#2)+(0,.2) -- +(2,0);
	%Erase the interval lines
	\draw[color=white] (#1,#2)+(0,0) -- +(0,0.2);
	\draw[color=white] (#1,#2)+(2,0) -- +(2,0.2);
}
\newcommand{\tikzdbars}[4]{
	%Draw the cross lines
	\draw[color=#3] (#1,#2)+(0,0) -- +(2,0);
	\draw[color=#4] (#1,#2)+(0,.2) -- +(2,0.2);
	%Erase the interval lines
	\draw[color=white] (#1,#2)+(0,0) -- +(0,0.2);
	\draw[color=white] (#1,#2)+(2,0) -- +(2,0.2);
}
\definecolor{myblue}{RGB}{93.9,129.2,181}
\definecolor{mypurple}{RGB}{120,94,240}
\definecolor{myred}{RGB}{220,38,127}
\definecolor{myorange}{RGB}{254,97,0}
\definecolor{myyellow}{RGB}{224.6,155.8,36.2}
\definecolor{graphblue}{RGB}{93.9,129.2,181}%{0.368417,0.506779,0.709798}
\definecolor{graphbrown}{RGB}{224.6,155.8,36.2}%{0.880722,0.611041,0.142051}
\let\oldsqrt\sqrt
\def\sqrt{\mathpalette\DHLhksqrt}
\def\DHLhksqrt#1#2{%
\setbox0=\hbox{$#1\oldsqrt{#2\,}$}\dimen0=\ht0
\advance\dimen0-0.2\ht0
\setbox2=\hbox{\vrule height\ht0 depth -\dimen0}%
{\box0\lower0.4pt\box2}}
\newcommand{\ord}{\mathop{\operatorname{ord}}}
\newcommand{\pred}{\mathop{\operatorname{pred}}}
\newcommand{\supp}{\mathop{\operatorname{supp}}}
\newcommand{\betaplus}{\mathop{\beta^*}}
\newcommand{\Tbeta}{\mathop{%[0,\beta)_\per
		\mathbb T_{\beta}}}
\newcommand{\dreg}{$d$-ary }
\newcommand{\FK}{\mathop{F_K}}
\newcommand{\FKb}{\mathop{\bar F_K}}
\numberwithin{equation}{section}
\theoremstyle{plain}
\newtheorem{thrm}{Theorem}[section]
\newtheorem{prop}[thrm]{Proposition}
\newtheorem{lem}[thrm]{Lemma}
\theoremstyle{definition}
\newtheorem{ex}[thrm]{Example}
\newtheorem{rem}[thrm]{Remark}
\newtheorem{cprob}[thrm]{Combinatorial Problem}
\begin{document}

% Rescales the size of section and subsection titles and distances around them (makes everything a little bit smaller)
%\makeatletter
%\renewcommand\section{\@startsection {section}{1}{\z@}%
%	{-0.5ex \@plus -2ex \@minus -.2ex}%
%	{0.3ex \@plus.2ex}%
%	{\normalfont\Large\bfseries}}% from \Large
%\renewcommand\subsection{\@startsection {subsection}{1}{\z@}%
%	{-0.5ex \@plus -2ex \@minus -.2ex}%
%	{0.3ex \@plus.2ex}%
%	{\normalfont\large\bfseries}}% from \Large
%\makeatother	

	\title{Sharp phase transition for random loop models on trees}
	\author{Volker Betz\footnote{betz@mathematik.tu-darmstadt.de}, Johannes Ehlert\footnote{ehlert@mathematik.tu-darmstadt.de}, Benjamin Lees\footnote{benjamin.lees@bristol.ac.uk}, Lukas Roth\footnote{lroth@mathematik.tu-darmstadt.de}
		}
	\date{ %}\address{
		\small Technische Universit\"{a}t Darmstadt, Germany}
	\maketitle
	\begin{abstract} We investigate the random loop model on the $d$-ary tree. 
		For $d \geq 3$, we establish %the existence of 
		a (locally) sharp phase transition for the existence of infinite loops. Moreover, we derive rigorous bounds that in principle allow to determine the value of the critical parameter with arbitrary precision. 
		Additionally, we prove the existence of an asymptotic expansion for the critical parameter in terms of $d^{-1}$. The corresponding coefficients can be determined in a schematic way and we calculate them up to order $6$.
	\end{abstract}
	
	\section{Introduction} \label{sec:intro}
Let $G=(V,E)$ be an undirected (simple) graph and let 
$\Tbeta:=\mathbb R / \beta \mathbb Z$ be the one-dimensional torus with 
length $\beta>0$. A {\em link configuration} on  $E \times \Tbeta$ is a 
family $X=(X^{e, \star})_{e \in E, \star \in \{\cross, \dbars\}}$ of measures on $\Tbeta$, such that
$X^{e,\cross}+X^{e,\dbars}$ is a simple and finite atomic measure on $\Tbeta$ for each $e$, i.e. it is a finite sum of Dirac measures $\delta_{t_i}$ with $t_i \neq t_j$ when $i \neq j$.
%Moreover, $X^{e,\cross}+
The atoms of $X^{e,\star}$ are called {\em links} and each link of $X$ is specified by a triple $(e,t,\star)$, where $\star\in \{ \cross, \dbars\}$ is the type and $t$ the position/time of the link on the edge $e$.

Each link configuration induces a {\em loop configuration}, which is a 
collection of open subsets of the set 
$V \times \Tbeta$. The rigorous definition of the map from a link 
configuration to a loop configuration, which will be given shortly, 
is a bit technical; its essence however can be conveniently 
grasped from Figure \ref{fig:loop-definition}: A link of type $\cross$ on 
an edge connects those regions on $\Tbeta$ on the two 
vertices adjacent to its edge that are on opposing sides of its position, 
while a link of type $\dbars$ connects regions on the same side of its 
position. Regions on the same vertex are always separated by links 
on adjacent edges.
After extension by transitivity, this yields a partition of $V \times 
\Tbeta$ into the closed set $\{(x,t): x \in V, t \in \supp X^{e,\star} 
\text{ for some } e \ni x, \star \in \{\cross,\dbars\} \}$, and the open 
sets of mutually connected points (the {\em loops}). 

The relevant quantity for loop models is the size of typical 
loops (in our case measured in the number of visited vertices, 
although the arc length 
is also a conceivable quantity of interest) when the link configuration is random. 
More precisely, the question is whether a given 
family of loop models has a percolation phase transition in the parameter
$\beta$, i.e.\ whether (for an infinite graph) the probability that a 
given fixed vertex is contained in an infinite loop is positive for some $
\beta$ and zero for others. 
The apparently simplest case is $G = \mathbb Z^d$, 
$X^{e,\dbars} = 0$ for all edges $e$ and the $X^{e,\cross}$ are iid Poisson point
processes of rate $1$. While numerical results 
\cite{Barp}
strongly suggest the existence of a phase 
transition, on a rigorous level the question is completely 
open in this case.

The main difficulty in the loop model is the 
lack of 
monotonicity, i.e. more links do not 
necessarily mean longer loops. This can already be 
seen in Figure \ref{fig:loop-definition}: removing 
one of the links between the two middle vertices 
merges the red and blue loop into one%, enlarging the number of vertices that the red loop visits
. Moreover, 
local changes of the loop configuration can connect 
or disconnect intervals in very different regions 
of $G$, so the model is highly non-local in this 
sense. These two obstacles have so far prevented 
the development of efficient tools to investigate 
percolation on loop models on most graphs, leading 
to a relative scarcity of results; however, a few results exist, and we will 
review them now.

	\begin{figure}
		\begin{tikzpicture}[scale=0.75]
		%Vertices at t=0
		\node[circle,scale=0.3,fill=black,draw] at (0,0) (x1) {};
		\node[circle,scale=0.3,fill=black,draw] at (2,0) (x2) {};
		\node[circle,scale=0.3,fill=black,draw] at (4,0) (x3) {};
		\node[circle,scale=0.3,fill=black,draw] at (6,0) (x4) {};
		%Link positions
		%	\node[inner sep=0pt] at (0,1.9) (l1) {};
		%	\node[inner sep=0pt] at (2,1.2) (l2) {};
		%	\node[inner sep=0pt] at (4,3.1) (l3) {};
		%	\node[inner sep=0pt] at (4,0.6) (l4) {};
		%	\node[inner sep=0pt] at (2,2.3) (l5) {};
		%Edges connecting these vertices
		\draw[dotted] (x1) -- (x2) -- (x3) -- (x4);
		%Coordinate axes
		\draw[->]  (-.5,-.5) -- node[below] {$G=(V,E)$} +(1.5,0) ;
		\draw[->] (-.5,-.5) -- +(0,5) node[above] {position};
		\draw (-.6,0) node[left] {$0$} -- (-.4,0);
		\draw (-.6,4) node[left] {$\beta$} -- (-.4,4);
		%Draw events
		\tikzcross{0}{1.9}{black}{black};
		\tikzdbars{2}{1.2}{black}{black};
		\tikzdbars{2}{2.3}{black}{black};
		\tikzcross{4}{3.1}{black}{black};
		\tikzdbars{4}{0.6}{black}{black};
		\end{tikzpicture}
		\hfill 
		\begin{tikzpicture}[scale=0.75]
		%Vertices at t=0
		\node[circle,scale=0.3,fill=black,draw] at (0,0) (x1) {};
		\node[circle,scale=0.3,fill=black,draw] at (2,0) (x2) {};
		\node[circle,scale=0.3,fill=black,draw] at (4,0) (x3) {};
		\node[circle,scale=0.3,fill=black,draw] at (6,0) (x4) {};
		%Link positions
		%	\node[inner sep=0pt] at (0,1.9) (l1) {};
		%	\node[inner sep=0pt] at (2,1.2) (l2) {};
		%	\node[inner sep=0pt] at (4,3.1) (l3) {};
		%	\node[inner sep=0pt] at (4,0.6) (l4) {};
		%	\node[inner sep=0pt] at (2,2.3) (l5) {};
		%Edges connecting these vertices
		\draw[dotted] (x1) -- (x2) -- (x3) -- (x4);
		%Coordinate axes
		\draw[->]  (-.5,-.5) -- node[below] {$G=(V,E)$} +(1.5,0) ;
		\draw[->] (-.5,-.5) -- +(0,5) node[above] {position};
		\draw (-.6,0) node[left] {$0$} -- (-.4,0);
		\draw (-.6,4) node[left] {$\beta$} -- (-.4,4);
		%Draw coloured loops
		\draw[color=myred] (x1) -- +(0,1.9) -- +(2,2.1) -- +(2,2.3) -- +(4,2.3) -- +(4,1.4) -- +(2,1.4) -- +(2,1.9) -- +(0,2.1) -- +(0,4);
%		\path[draw, color=green] (x1) -- ++(0,1.9) edge[dashed] ++(2,0.2);
		\draw[color=myblue] (x2) -- +(0,1.2) -- +(2,1.2) -- +(2,0.8) -- +(4,0.8) -- +(4,3.1) -- +(2,3.3) -- +(2,4);
		\draw[color=myblue] (x3) -- +(0,0.6) -- +(2,0.6) -- +(2,0);
		\draw[color=myblue] (x2)+(0,4) -- +(0,2.5) -- +(2,2.5) -- +(2,3.1) -- +(4,3.3) -- +(4,4);
		\end{tikzpicture}
		\caption{Example of a small finite graph $G$ and a link configuration $X$ (left) leading to the two depicted loops (right, {\color{myred}{red}} and {\color{myblue}{blue}}).}
		\label{fig:loop-definition}
	\end{figure}
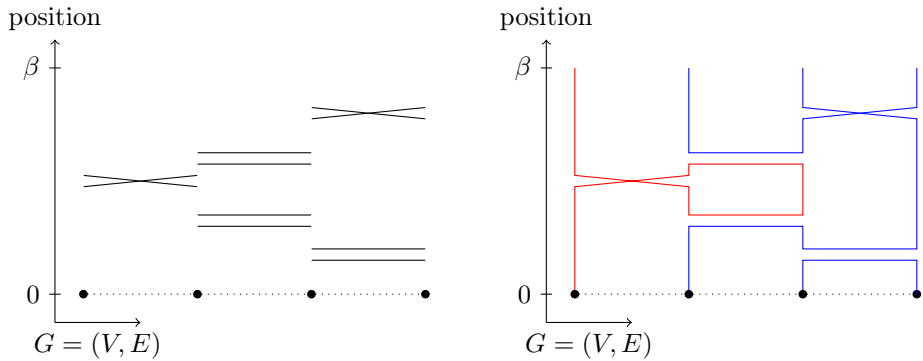
	
	In the context of probability theory, the loop 
	model goes back to the random stirring process, introduced by Harris \cite{Har-72}. This process $(\sigma_t)_{t\in [0,\beta]}$ of permutations on $V$ corresponds to the random loop model mentioned above, i.e.\ with $X^{e,\dbars} = 0$ and $X^{e,\cross}$ iid Poisson point processes. 
Namely, given a link configuration $X$ and setting 
$\sigma_0$ to be the identity permutation, we % $\sigma_0$ and 
	increase time $t$ and if there is a link on an edge $\{x,y\}$ at the position $t$ that we currently consider, we compose $\sigma_{t-}$ with the transposition of $x$ and $y$. It is easy to see that two vertices $x$ and $y$ are contained within the same cycle of $\sigma_\beta$ iff $(x,0)$ and $(y,0)$ share a loop.\\
	Note that, on arbitrary connected graphs with bounded degree, the critical parameter $\beta_c$ is strictly larger than the percolation threshhold for edges carrying at least one link \cite{Muehlbacher}.
	For the random stirring model on the (finite) complete graph, the phase transition occurs at $\beta_c=|V|^{-1}$ in the limit of $|V|\to \infty$, see e.g. \cite{Ber-11, BG-15}. Moreover, for a time-discrete model where one link occurs at each step and for time-scales above a critical value corresponding to $\beta_c=|V|^{-1}$ in our setting, Schramm showed in % Schramm
	\cite{Schramm05} that the distribution of cycle sizes within the 
	%macroscopic percolation 
	giant
	component converges (after renormalisation) to a Poisson-Dirichlet distribution of parameter $1$. In \cite{BKLM-18}, this result has been extended to include links of type $\dbars$, too.\\
	Apart from the complete graph and the $2$-dimensional Hamming graph \cite{SenMil-16}, another graph for which progress has been made in the context of the random stirring model is  the \dreg tree. Angel \cite{Ang-03} showed the existence of two different phases for $d\geq 4$, and the existence of infinite cycles for $\beta \in (d^{-1}+\tfrac{13}{6} d^{-2},\ln(3))$ 
	in the asymptotic regime $d \to \infty$. 
Hammond then showed in \cite{Ham-13} that (for $d\geq 2$) there is a value $\beta_0$ above which $\sigma_\beta$ contains infinite cycles and that for $d \geq 55$, one may chose $\beta_0=101 d^{-1}$. Furthermore and for even larger $d$, strict bounds for this critical parameter have been found in \cite{Ham-15} and it was shown that the transition from finite to infinite cycles is sharp. In the recent work of Hammond and Hegde \cite{Ham-18}, these bounds have been proven to hold for $d \geq 56$ while even including links of type $\dbars$. 
	Moreover, Bj\"{o}rnberg and Ueltschi \cite{BU-18-1} 
determined the critical parameter $\beta_c$ of the loop model %on the \dreg tree
	up to second order in $d^{-1}$ as $d\to \infty$. The reader should note that 
	the majority of the above results rely on graph degrees being comparatively large, or are even just asymptotic in them. 
	
	In the present paper we significantly improve the existing 
	results for $d$-ary trees and achieve a rather 
	complete picture of 
	the random loop model in these cases. We focus on the 
	case where  
	the $(X^{e,\star})$ are iid Poisson point processes, but it 
	should be clear how our method extends to other families of
	independent point processes. 
	While a simple percolation argument shows that almost surely there are no infinite loops for $\beta \leq d^{-1}$, our methods aim at the critical region $d^{-1} < \beta \leq d^{-1/2}$. 
	In 
	Theorem \ref{thrm:sharp-phase-transition} below, we establish the 
	existence of a sharp phase transition for all 
	$d \geq 3$ within this region, comparable results previously existed only up to $d^{-1}+2d^{-2}$ and for $d \geq 26$ \cite{Ham-18}. Additionally, in Theorem \ref{prop:betac-asymptotics},
	we provide an asymptotic expansion of the critical value 
	in powers of $1/d$, with coefficients depending on the
	parameter $u$ controlling the relative intensities of 
	the point processes $X^{e,\cross}$ and $X^{e,\dbars}$. 
	
	Our proofs rely on a natural idea: the central object are those edges that carry precisely one link.
	It is not difficult to see that at such edges a renewal event occurs: 
	Removing any edge $e$ splits the \emph{tree} $G$ into 
%	if $G$ ist a tree, removing any the edge $e$ splits $G$ into 
	two disconnected subtrees $G_1$ and $G_2$. 
%	If
	Thus, in the case that
	$e$ only carries a single link, and if the loop through that link is finite on $G_1$, say, then that loop has to pass through $e$ in both directions. Consequently, in this case 
	the loop structure on $G_2$ depends only on the link structure of $G_2$ and not on the link structure on $G_1$. This allows to construct renewal schemes that use single-link edges as `new roots'. 
	
	The first such renewal scheme was presented in the work of Angel \cite{Ang-03}. The paper uses a single-link edge $e = \{x,y\}$ as a renewal edge if the arrival time $t_e$ of its link is uncovered, meaning that 
	none of its siblings has a pair of links whose arrival times separate the time $t_e$ of the link on $e$ from the first time the loop meets the parent $x$ of $e$, 
	in the topolgy of the torus $\mathbb T_\beta$.  This guarantees that any loop arriving at the parent $x$ of $e$ either is already infinite or will eventually pass through $e$. The proof then consists in identifying 
	conditions under which infinitely many renewal edges exist with positive probability. The main limitation of this scheme is that being uncovered is a rather strong restriction on a single-link edge, 
	and that an un-interrupted chain of uncovered edges is needed from the root to infinity with positive probability. 
	Thus the criterion leads to conditions that are far from optimal. In particular they are only accurate to first order in $1/d$ for large $d$, and only work for $d \geq 4$. 
	
	Our approach is a more systematic one: we consider multilink-clusters, i.e., the finite subtrees of the infinite tree whose edges 
	all have more than one link, and use as renewal edges all single-link edges protruding from these subtrees that carry the loop entering the subtree 
	at its root. In comparison to the method of \cite{Ang-03}, this allows not only for covered single-link edges to be used, but it (in principle) allows us to cross any number of edges that have multiple links.
	
%	Of course, such an approach is useful only if we can determine what edges are renewal edges in a given subtree.\todo{Unclear} One of our results is that by a Galton-Watson argument, 
%	all we need to know is that the expected number of such edges emerging from a multilink-cluster is greater than one. This allows for a systematic treatment of the problem: we find an explicit formula for 
%	the expected number of renewal edges, for an (in principle) arbitrary finite multilink-cluster with an arbitrary number of links on their edges. Since this formula contains non-trivial combinatorics, 
%	we only get explicit expressions for multilink-clusters up to a certain size and link number. However, since the number of renewal edges clearly is a non-negative random variable, we obtain a lower bound to its 
%	expected value by just considering finitely many such subtrees, and we can improve this bound systematically by considering more of them. When combined a (relatively simple) upper bound for the number of 
%	expected renewal edges, we obtain very precise estimates for the value of $\beta_c$, and modulo the difficulty of evaluating the aforementioned combinatorial quantities, 
%	our method actually gives a full asymptotic expansion of  $\beta_c$ in $1/d$. 
	
	One limitation that our method does have is that it relies on a sufficiently high probability for the multilink-cluster to be \emph{finite}. This poses no limitation for $\beta \leq d^{-1/2}$ %1/\sqrt{d}$ 
	as this is below the percolation threshold for these clusters, but becomes an obstacle for higher $\beta$. Since $\beta_c \sim 1/d$ for large $d$, no problem appears
	in view of the asymptotic estimates for $\beta_c$, and in the regime of small $d$ our results are sufficient
%	ly sharp to allow for results in the previously unknown cases $d=3,4$. 
	to identify $\beta_c$ with high precision.
	However, for the proof that there is no phase of almost surely finite loops beyond $\beta_c$, we need to rely on results obtained by Hammond and Hegde \cite{Ham-18}. It is not inconcievable that this could be improved by suitable lower bounds for the expected number of renewal edges for an infinite (or very large) multilink-cluster, but such an investigation is beyond the scope of the current work and left for future investigations.

	Apart from random stirring, a strong motivation for 
	studying random loop models comes from their relation to quantum mechanical models. More precisely, in \cite{AizNa} and \cite{Toth-iLB} stochastic representations of the spin-$\tfrac{1}{2}$ quantum Heisenberg antiferromagnet and ferromagnet, respectively, were studied. 
	Recently, Ueltschi \cite{Ueltschi-13} introduced the random loop model as a common generalisation that interpolates between those representations and also includes a representation of the spin-$\tfrac{1}{2}$ XY model. For these representations, each link configuration receives a weight proportional to $\theta^{\# \text{loops}}$, so for $\theta \neq 1$, links on different edges are no longer independent. Also, the model cannot be directly defined on an infinite graph. Thus, it has to be constructed via an infinite volume limit. Physcially, 
	$\theta = 2$ is the most relevant case. The occurrence of infinite loops is then related to % the respective quantum models showing 
	non-decay of correlations for the quantum spin systems. Therefore, in order to see that these % quantum spin 
	systems %mechanical models 
	undergo a phase transition and to determine the critical inverse temperature $\beta_c$ at which it occurs%occuring within these models
	, one possibility is to investigate the different phases of the random loop model. 
	%Adapting the method reflection positivity from \cite{DLS-78}, Ueltschi showed the existence of a phase transition on $\mathbb Z^d$ for
	\\
	As it is the case in the random stirring model, the most interesting (but also apparently the most challenging) graph to study these models on is $\mathbb Z^d$. % the above papers considered $\mathbb Z^d$,
	Mathematical results exist for %other graphs such as %the complete graph \cite{BKLM-18,Schramm05}, 
	the complete graph \cite{Bj15,BFU18}, the $2$-dimensional Hamming graph \cite{AKM-18}, Galton-Watson trees \cite{BEL-18} and the \dreg tree \cite{%Ang-03,%BU-18-1,
		BU-18-2%,Ham-13,Ham-15,Ham-18
	}, again in the regime of high degrees. 
	Unfortunately, for $\theta \neq 1$, the weighted measures involve intricate correlations and the techniques of our paper do not directly apply.  
	
This paper is organized as follows: In Section \ref{sec:results},
we state our precise assumptions and results. In Section 
\ref{sec:exploration-scheme}, we introduce {\em exploration
schemes}, a recursive construction with a renewal structure that 
constitutes the core of our proof as it gives a Galton-Watson process whose survival is related to the event that the loop containing the root at time $0$ is infinite. 
This enables us to distinguish the phases by
considering the expected value for the first generation of this process 
%enables us to distinguish the phases 
and 
without much further work, we are 
then already able to establish a locally sharp phase transition
%prove Theorem \ref{thrm:sharp-phase-transition} %\ref{thrm:exploration-scheme-conditions} 
for all $d \geq 5$.
Afterwards, within Section \ref{sec:asymptotic-expansion}, we will turn our attention to the asymptotic expansion and on the way to its proof, we will discover sufficient (and computable) conditions for the two phases. 
Finally, in Section 
\ref{sec:combinatorics}, we will then establish the necessary computations that
enable us to push our results to $d=3$ and to calculate coefficients within the asymptotic expansion.
	
\section{Main results} \label{sec:results}	
		
We start by giving a proper definition of the map from link 
configurations to loop configurations. 
Suppose that 
	$X = (X^{e,\star})_{e \in E, \star \in \{ 
		{\cross}, {\dbars} \}}$ is a link configuration. 
We call $X$ {\em admissible} if $X^{e,\star}$ and $X^{e',\star'}$ 
	are mutually singular whenever $e \neq e'$ but 
	$e \cap e' \neq \emptyset$, and also when $e = e'$ and 
	$\star \neq \star'$. This guarantees that the construction of loops given below
	is well defined. When fixed link configurations are given, we will always assume that they are admissible, and that all our link-configuration-valued random variables will produce admissible link configurations almost surely. 
	
	% for the given link configuration. 
	Given an admissible link configuration $X$, a loop is an equivalence class of elements of $V \times \Tbeta$ 
	induced by the following \textit{connectedness relation}:
We equip $V$ with the discrete and $\Tbeta$ with the quotient topology and say that two points $(x_0,t_0)$ and %$(x_1,t_1)$ in \linebreak{} $V\times\Tbeta$ 
	$(x_1,t_1) \in V\times\Tbeta$
	are {\em connected} iff there is no link on an edge incident to $x_i$ at position $t_i$, $i=0,1$, and there is a piecewise continuous path %\linebreak{}
	$\Gamma =(\Gamma_1,\Gamma_2) \colon [0,1] \to V \times \Tbeta$ from $(x_0,t_0)$ to $(x_1,t_1)$ such that
	\begin{itemize}
		\item $\Gamma_2$ is continuous everywhere and differentiable at every point of continuity of $\Gamma$. Where the derivative $\Gamma_2'$ exists, its absolute value is % with $|\gamma_2'|$ being 
		a fixed constant.
		%	{\color{red}{\\Do we need to identify $\Tbeta$ with $[0,\beta)$?}}
		\item If $\Gamma$ is discontinuous at $s \in (0,1)$, then there is a link on $\{\Gamma_1(s-),\Gamma_1(s+)\}$ at position $\Gamma_2(s)$.
		\item For all links $(\{x,y\},t,\star)$ of $X$ such that 
		$\Gamma(s-)=(x,t)$ (or $\Gamma(s+)=(x,t)$) for some $s\in (0,1)$ we have $\Gamma(s+)=(y,t)$ (or $\Gamma(s-)=(y,t)$, respectively) as well as
		\begin{align*}
		\Gamma_2'(s+)=\left \{ \begin{array}{cl} + \Gamma_2'(s-) & \text{ if }\star=\cross, \\ - \Gamma_2'(s-) & \text{ if }\star=\dbars. \end{array}  \right.
		\end{align*}
	\end{itemize}
	Note that a loop $\gamma$ is by definition a subset of $V \times \Tbeta$. Nevertheless, in a slight abuse of notation, we write $x \in \gamma$ iff there is a $t \in \Tbeta$ with $(x,t) \in \gamma$. Similarly, we set
	\begin{align*}
	|\gamma|:=|\{x \in V : x \in \gamma \}|.
	\end{align*}
	Now that we have defined loops, let us fix our assumptions. 
	We write $T = (V,E)$ for the \dreg tree with root $r \in V$, i.e. the tree where 
	each vertex has $d$ `children' and (except for $r$) one `parent'.
	We assume that the link configuration is given
	by an independent family 
	$(X^{e,\star})_{e \in E, \star \in \{\cross,\dbars\} }$ 
	of homogeneous Poisson point processes, where for each $e \in E$, 
	$X^{e, \cross}$ has rate $u \in [0,1]$ and
	$X^{e, \dbars}$ has rate $1-u$. Under these 
	assumptions, we have: 
		
	\medskip
	\begin{thrm}[Existence and local sharpness of the phase transition] \label{thrm:sharp-phase-transition} $~$\\
		Let $\gamma_T$ be the loop on $T$ containing $(r,0)$. Then for all $d \geq 3$ and for all $u \in [0,1]$ there exist
		$\betaplus \geq d^{-1/2}$ and $\beta_c \in (0,\betaplus)$
%		$\beta_c>0$ and $\betaplus>\beta_c$ 
		such that
			\begin{enumerate}[label=\textnormal{(\textit{\roman*})}]
				\item $|\gamma_T|<\infty$ almost surely for all $\beta \leq \beta_c$,
				\item $|\gamma_T|=\infty$ with positive probability for all $\beta \in (\beta_c,\betaplus)$.
			\end{enumerate}
%			Moreover, $\betaplus\geq d^{-1/2} %\tfrac{1}{\sqrt{d}}
%			$ for all $d \geq 3$ and $\betaplus=\infty$ for $d \geq 16$.
	\end{thrm}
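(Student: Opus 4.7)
The plan is to analyse the loop $\gamma_T$ containing $(r,0)$ through the recursive exploration scheme advertised in the introduction: starting at $(r,0)$ we descend the tree, and at each newly visited vertex $x$ we maintain the collection of open time-intervals on $\{x\}\times\mathbb T_\beta$ that still need to be resolved, revealing the Poisson links on each fresh edge $(x,c)$ as we cross into the sub-tree below. Because $T$ is a tree, the links inside the sub-tree rooted at $c$ are independent of those outside, and conditionally on the interval data transported across $(x,c)$ the induced exploration below has the same law as a fresh exploration on an isomorphic $d$-ary sub-tree. This renewal structure is the engine of the proof: it embeds the exploration inside a Galton--Watson-type branching process whose types encode the interval data at one vertex, and whose offspring are generated by the Poisson links on a single edge together with $d$ independent copies of the exploration below.

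Write $\lambda(\beta,u,d)$ for the Perron eigenvalue of the mean offspring operator of this branching process, set $\beta_c := \inf\{\beta>0 : \lambda(\beta,u,d)>1\}$, and let $\betaplus$ be the supremum of the connected interval of $\{\beta>\beta_c : \lambda(\beta,u,d)>1\}$ containing $\beta_c$. Part~\textit{(i)} then follows from the standard subcritical estimate: for $\beta<\beta_c$ the expected number of pending intervals at generation $n$ decays geometrically, so the exploration terminates almost surely and $|\gamma_T|<\infty$ a.s. Part~\textit{(ii)} follows from the converse: on $(\beta_c,\betaplus)$ the process is strictly supercritical, and the classical dichotomy gives positive survival probability, which in the present setting is precisely the probability that $|\gamma_T|=\infty$.

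The explicit bound $\betaplus\geq d^{-1/2}$ would come from a scalar lower bound on $\lambda(\beta,u,d)$ retaining only the cheapest contributions to the offspring count---typically the canonical sub-interval produced by the first link on each of the $d$ outgoing edges, together with a Poisson tail controlling the probability that no second link appears close enough to cancel it. For $d\geq 16$ one would expect the same calculation to remain supercritical for all $\beta>0$, giving $\betaplus=\infty$. The main obstacle is the non-monotonicity emphasised in the introduction: monotonicity in $\beta$ would trivially promote a single supercritical value to all larger ones, but as this fails, the window $(\beta_c,\betaplus)$ is genuinely intrinsic, and verifying $\lambda(\cdot,u,d)>1$ on a whole interval requires control of more than just the mean of the offspring distribution. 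This is also why the bare exploration scheme only yields the theorem for $d\geq 5$, and why pushing it down to $d=3$ must wait for the combinatorial identities developed in Section~\ref{sec:combinatorics}.
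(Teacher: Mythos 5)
Your high-level plan---descend the tree, exploit conditional independence across edges, embed the exploration in a branching process, and read off a phase transition from a criticality criterion---points in the right direction, but the specific branching process you propose does not exist in a usable form, and you are missing the one structural observation that makes the paper's construction work.

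The paper does not build a multi-type branching process whose types are ``interval configurations at a vertex.'' That object is problematic: the interval data at a vertex $x$ (which arcs of $\{x\}\times\Tbeta$ belong to $\gamma_T$) cannot be determined by the links above $x$ alone, because a loop can leave $x$, wander into the subtree below, and return; whether two arcs at $x$ fuse or split therefore depends on the \emph{entire} subtree, not on an ancestor-to-descendant flow of state. This is exactly the non-locality the introduction warns about, and it means the ``mean offspring operator'' you invoke is not well defined as a forward transition kernel, nor is it clear that a Perron eigenvalue makes sense on an infinite-dimensional type space. Instead, the paper's renewal points are edges carrying \emph{exactly one} link: Lemma~\ref{lem:loop-splitting} shows that across such an edge the loop decomposes exactly into its two halves, and this is what cuts the non-local dependence. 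The exploration then reveals the whole finite cluster $C_x$ of $\geq 2$-link edges at once (finite a.s.\ for $\beta\leq d^{-1/2}$), resolves the loop structure inside it, and restarts at the one-link boundary edges. This yields a genuine \emph{single-type} Galton--Watson process $(|M_n|)_n$ (Lemma~\ref{lem:Mn-GW}), so the criticality criterion is just $\mathbb E_\beta(|M_1|)>1$, with no spectral theory needed.

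Two further points. First, you say verifying supercriticality on a whole $\beta$-interval ``requires control of more than just the mean of the offspring distribution'' because of non-monotonicity; in fact the paper proves $\beta\mapsto\mathbb E_\beta(|M_1|)$ is strictly increasing on $(0,d^{-1/2}]$ (Proposition~\ref{prop:betac-characterisation}), so the mean alone suffices there, and the restriction to $\beta\leq d^{-1/2}$ is precisely how the non-monotonicity obstacle is circumvented. Second, the claim $\betaplus=\infty$ for $d\geq 16$ does \emph{not} come from pushing the same calculation: it is imported from Hammond--Hegde \cite{Ham-18}, who show infinite loops for all $\beta\geq 4d^{-1}$ when $d\geq 16$; the paper's own method only reaches $\beta\leq d^{-1/2}$. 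Your expectation that the in-house estimate would cover all $\beta$ for large $d$ has no basis in the argument and is exactly what the paper cannot do without the external input.
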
	

	Note that, for $d=1$, there is no phase transition since $|\gamma_T|<\infty$ almost surely (non-zero probability of empty edges). Moreover, the case $d=2$ technically is accessible with our method. However, it would take much more computational effort to prove a similar statement in this case, see Remark \ref{rem:d2transition}. Furthermore note that a re-entry into the phase of finite loops for $\beta > d^{-1/2}$ is quite implausible. 
	Nevertheless, we cannot exclude this behaviour as our method %rule out such a re-entry since our approach 
	is tailored %to work in the regime 
	for $\beta$ up to $d^{-1/2}$. Still, in combination with \cite[Proposition 1.2 (2),(4)]{Ham-18}, Theorem \ref{thrm:sharp-phase-transition} suffices to
	show 
%	we may combine our  to find 
	that there is no re-entry and that the phase transition is thus globally sharp for all $d \geq 16$, therefore improving the previously known lower bound of $d \geq 56$ from \cite{Ham-18}.

	\begin{figure}
	\resizebox{\textwidth}{!}{
		\begin{tabular}{p{0.5\textwidth} p{0.5\textwidth}}
			\includegraphics[height=165px]{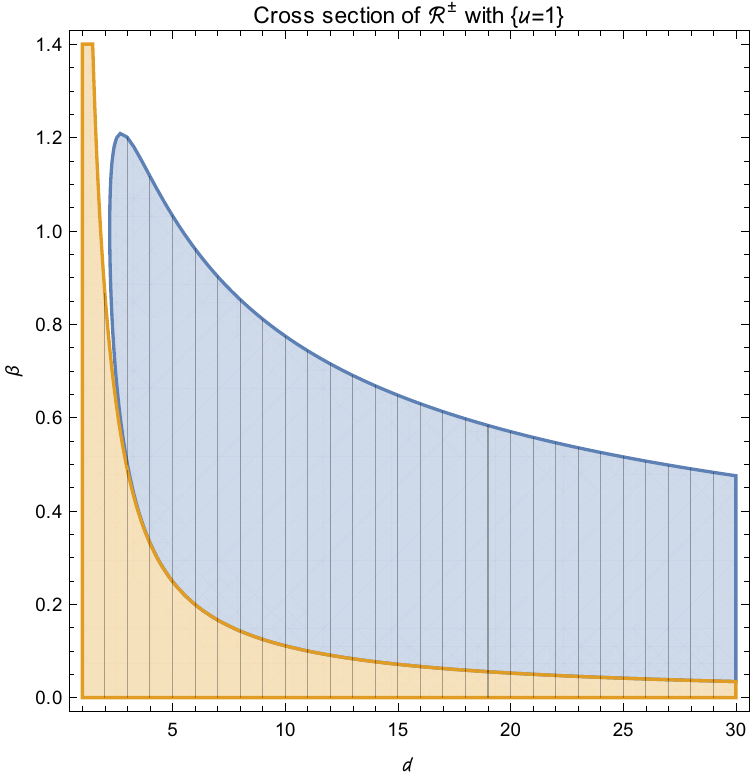} &
			\includegraphics[height=166px]{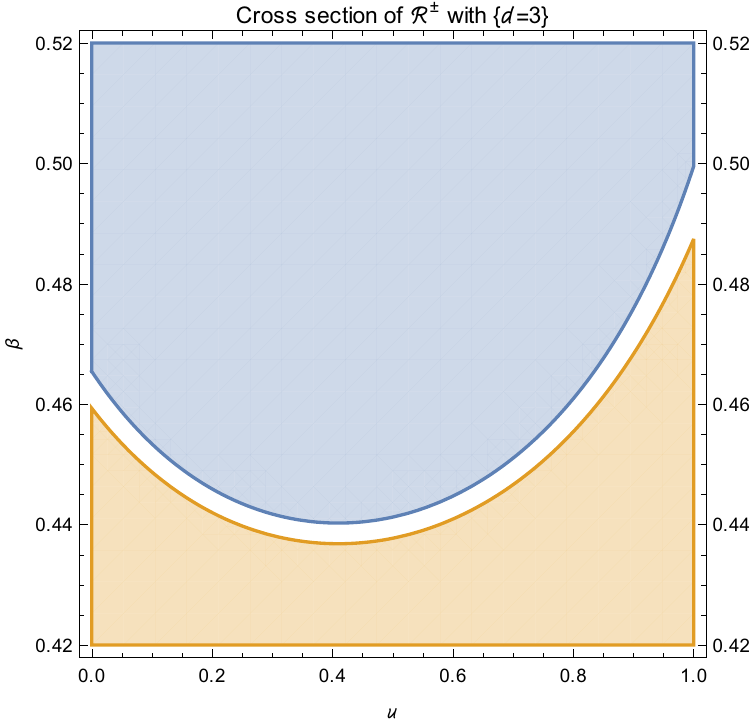}
		\end{tabular}
	}
	\caption{Regions $\mathcal R_5^\pm$ of parameters $(\beta,d,u)$ where we can guarantee that $\gamma_T$ is infinite with positive probability (upper/blue region $\mathcal R^+_5$) and that $\gamma_T$ is finite almost surely (lower/sandybrown region $\mathcal R^-_5$), respectively. See (\ref{eq:Rplusdef}) and (\ref{eq:Rminusdef}) for a precise definition of these regions.
	}
	\label{fig:phase-subregions}
\end{figure}
	
	In addition to establishing a phase transition, the tools we develop %to establish the phase transition 
	also yield an equation in $\beta$ that is solved by $\beta_c$, compare Proposition \ref{prop:betac-characterisation} and (\ref{eq:EM1rep}). We may then approximate its terms systematically to find sharp bounds on the critical parameter $\beta_c$ for every $d \geq 3$.
	These estimations rely on solving a certain combinatorial problem associated to finite edge-weighted trees, giving implicit conditions about the phase regions. 
	Instead of providing explicit but imprecise bounds for $\beta$ (which would also be possible, cf. (\ref{eq:betacestimationexplicit})), we rather check whether one of these implicit conditions is satisfied. Thereby,
%	In particular, 
	we obtain a region of parameters $(\beta,d,u)$ where $\gamma_T$ is infinite with positive probability (blue region in Figure \ref{fig:phase-subregions}) and a region where it is finite almost surely (sandybrown region in Figure \ref{fig:phase-subregions}). The critical parameter $\beta_c$ thus lies within the small (white) gap between these regions. 
	For more details on these implicit conditions and the corresponding combinatorial problem, we refer to Lemma \ref{lem:EM1-estimates} 
%	as well as
	and
%	The calculations 
%	leading to Figure \ref{fig:phase-subregions} can be found in 
	Section \ref{sec:combinatorics}.

	A further analysis of the terms within the determining equation for $\beta_c$  yields the following result.
	
	\medskip
	\begin{thrm}[Asymptotic expansion of $\beta_c$]\label{prop:betac-asymptotics} ~\\
	There exist polynomials $\alpha_0, \alpha_1, \alpha_2,
	\ldots$ such that for any $K \in \mathbb N_0$ the critical parameter is asymptotically given by
	\begin{align} \label{eq:betacexpansion}
	\beta_c = \sum_{k=1}^{K+1} \frac{\alpha_{k-1}(u)}{d^k} + \mathcal O(d^{-(K+2)})
	\end{align}
	as $d \to \infty$.	
	\end{thrm}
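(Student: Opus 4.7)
The plan is to start from the implicit characterization of $\beta_c$ that comes out of the exploration scheme of Section \ref{sec:exploration-scheme}, namely a renewal-type equation $F(\beta,d,u)=0$ whose terms are sums over finite edge-weighted trees with weights $\beta^{|E(t)|}$ times a polynomial in $u$. After the rescaling $\beta = z/d$, I expect $F(z/d,d,u)$ to admit a formal expansion $\Phi_0(z,u) + d^{-1}\Phi_1(z,u) + d^{-2}\Phi_2(z,u)+\cdots$, where each $\Phi_j$ is real-analytic in $z$ and a polynomial in $u$. The polynomial character at each fixed order should follow from the fact that, after multiplying by the appropriate power of $d$, only finitely many tree shapes contribute to $\Phi_j$ (intuitively, each additional edge beyond the minimal contributing topology costs a factor $\beta\cdot d = z = \mathcal O(1)$ but imposes extra tree-shape constraints of order $d^{-1}$). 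The leading equation $\Phi_0(z,u)=0$ should have a unique positive simple root $z_0(u)=\alpha_0(u)$, compatible with the leading order $\beta_c \sim \alpha_0(u)/d$ expected from Hammond--Hegde and Bj\"ornberg--Ueltschi.

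Once this formal expansion is in place, I would apply a quantitative implicit function argument. Substituting the ansatz $\beta_c = \sum_{k=1}^{K+1} \alpha_{k-1}(u)\,d^{-k}$ into $F(\beta,d,u)=0$, expanding both sides in powers of $d^{-1}$, and matching coefficients, one obtains a triangular recursion whose $k$-th step has the form $\partial_z \Phi_0(z_0(u),u)\cdot \alpha_k(u) = P_k(\alpha_0(u),\ldots,\alpha_{k-1}(u);u)$, where $P_k$ is a polynomial expression assembled from derivatives of $\Phi_0,\ldots,\Phi_k$. Simplicity of the root $z_0(u)$ ensures that $\partial_z \Phi_0(z_0(u),u)$ is nonzero; if it is itself a polynomial in $u$ (or a rational function whose denominator does not vanish on $[0,1]$) then each $\alpha_k$ is polynomial in $u$, as claimed. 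This step is essentially algebraic and should be short once the structure of $F$ is known.

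The main obstacle is the error bound $\mathcal O(d^{-(K+2)})$. This is a uniform estimate that requires controlling the tail of the tree sum defining $F$ simultaneously in $d$, in $u\in[0,1]$, and in a neighborhood of $\alpha_0(u)/d$ large enough to contain $\beta_c$. I would split the sum into trees of size at most some threshold $N(K)$, which are treated exactly and contribute to the polynomials $\alpha_0,\ldots,\alpha_K$, and trees of size larger than $N(K)$, whose total weight must be shown to be $\mathcal O(d^{-(K+2)})$. Because each extra vertex in the exploration scheme contributes a factor bounded by $\beta\,d = \mathcal O(1)$ paired with a combinatorial factor that gains a $d^{-1}$ per unit of excess structure, such a bound should be available from the estimates already used in Section \ref{sec:combinatorics} to construct the regions $\mathcal R_5^\pm$; the task is to repackage them with an explicit dependence on tree size. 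Combining this tail bound with the implicit function argument above yields the existence of $\beta_c$ with the stated expansion and error, and gives an effective scheme to compute $\alpha_0(u),\ldots,\alpha_K(u)$.
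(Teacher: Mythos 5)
Your proposal follows essentially the same route as the paper's proof: characterize $\beta_c$ as the root of $\mathbb E_\beta(|M_1|)=1$ (Proposition \ref{prop:betac-characterisation}), rescale to $\alpha=\beta d$, $h=d^{-1}$, truncate the tree sum so that each retained coefficient is a finite (hence polynomial-in-$u$) sum, prove a tail bound uniform in $u$ and $\alpha$ (Lemma \ref{lem:EM1-estimates}), and run an analytic implicit function / coefficient-matching recursion, converting the two-sided bounds on the expectation into two-sided bounds on $\beta_c$ via monotonicity. The only differences are bookkeeping: the paper truncates by the order $n(S)-|E(S)|$ rather than tree size, needs the tail only at order $d^{-(K+1)}$ in the expectation (the extra factor $d^{-1}$ for $\beta_c$ comes from $\beta=\alpha/d$), and the nondegenerate derivative $\partial_\alpha F_K(1,0,u)=1$ is a constant, which is precisely what makes the $\alpha_k$ genuinely polynomial in $u$ rather than merely rational.
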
 
	
In fact, we know somewhat more than just the existence of the polynomials $\alpha_0,\ldots, \alpha_{K}$: the degree of $\alpha_k$ is at most $2k$ and each $\alpha_k$ is explicitly given in terms of $\alpha_0,\ldots,\alpha_{k-1}$ as well as derivatives of a function $F_k$, see (\ref{eq:alphakrecursion}). However, the evaluation of $F_k$ relies on solving the aforementioned combinatorial problems associated with fixing the multilink-cluster and the total number of links on its edges
such that the difference between this number of links and the number of edges is at most $k$. 
Thus, it becomes increasingly time-consuming to determine $F_k$ as $k$ increases
%
%can be determined by solving the aforementioned combinatorial problem for a finite number of cases 
and we have implemented this computation up to $K=5$. In particular, we find that $\alpha_0$ and $\alpha_1$ coincide with the result of \cite{BU-18-1}. Interestingly, the polynomials that we found exhibit an intriguing property: they are convex functions of $u$, and writing $\alpha_k$ with respect to the basis of Bernstein polynomials of degree $2k$, i.e.
	\begin{align} \label{eq:alphakrep}
	\alpha_k(u) = \sum_{j=0}^{2k} \alpha_{k,j} %B_{j,2(k-1)}(u)$, where $B_{j,2(k-1)}(u) = {{2(k-1)}\choose{j}} u^j(1-u)^{2(k-1)-j}$
		{{2k}\choose{j}} u^j(1-u)^{2k-j},
	\end{align}
	their coefficients satisfy $0<\alpha_{k,j} \leq 1$ for all $j$ and all $k \leq 5$, see Table \ref{tab:alphak}. 
	Note that, for $k=1$, the occurrence of positive coefficients seems reasonable as $u^j(1-u)^{2-j}$ might account for the contribution of events with $j$ crosses and $2-j$ bars on the sole edge of the multilink-cluster. However, for $k\geq 2$, the combinatorial problems associated with three or more links on one edge of the multilink-cluster need to be included (compare with Table \ref{tab:exploration-scheme-results}), but there is no corresponding basis polynomial. Therefore, this cannot explain this feature and
	hence, we do not know whether this structure persists for larger $k$ or, 
	if it persists, what the reason is.

	\begin{table}	
		\centering
		\resizebox{0.95\textwidth}{!}{$\displaystyle
			\begin{array}[t]{c|cccccc}
			\alpha_{k,j} & k=0 & k=1 & k=2 & k=3 & k=4 & k=5 \\ \hline
			j=0 & 1 & 5/6 & 2/3 & 1559/2520 & 7973/12960 & 375181/604800\\
			j=1 & & 1/2 & 47/120 & 1451/3780 & 71693/181440 & 120203/297000 \\
			j=2 & & 1 & 28/45 & 6737/12600 & 621463/1270080 & 418041641/898128000 \\
			j=3 & & & 1/3 & 353/1260 & 46727/169344 & 70171259/239500800 \\
			j=4 & & & 11/12 & 1721/2700 & 4531/7938 & 122779529/232848000 \\
			j=5 & & & & 9/40 & 210167/1270080 & 122840869/838252800 \\
			j=6 & & & & 307/360 & 226769/317520 & 238710041/349272000 \\
			j=7 & & & & & 57/320 & 8806229/399168000 \\
			j=8 & & & & & 939/1120 & 28680241/35925120 \\
			j=9 & & & & & & 4541/28800 \\
			j=10 & & & & & & 62417/72576
			\end{array}$
		}
		\caption{Coefficients $(\alpha_{k,j})_{j=0}^{2k}$ of the polynomial $\alpha_k$ with respect to the Bernstein basis polynomials %$B_{j,2(k-1)}$ 
			of degree $2k$ for $k=0,\ldots,5$, compare (\ref{eq:alphakrep}).}
		\label{tab:alphak}
	\end{table}

	In addition to the given asymptotic expansion, we may evaluate the aforementioned implicit conditions from Lemma \ref{lem:EM1-estimates} with sufficiently high numerical precision at suitable approximations for $\beta_c$ and, for instance, we find that
	\begin{align} \label{eq:betacestimationexplicit}
		0 \leq \beta_c - \sum_{k=1}^3 \frac{\alpha_{k-1}(u)}{d^k} \leq \frac{2}{d^4}
	\end{align}
	for all $3 \leq d \leq 100$ and $u \in [0,1]$.

\section{The exploration scheme}
\label{sec:exploration-scheme}

The core object we will be working with is an exploration scheme, i.e., a map that assigns a sequence $(M_n)_n$ with $M_n \subseteq V$ to each link configuration $X$. By construction, this process follows the propagation of the loop $\gamma_T$ through the tree and, in particular, the survival of $(M_n)_n$ is related to the event that $|\gamma_T|=\infty$. Moreover, every $x \in M_n$ will have an ancestor within $M_{n-1}$ which is not necessarily the predecessor of $x$. Rather, given its ancestor, $x$ is chosen in a way such that the edge preceding %predecessing 
$x$ carries one link that renews $\gamma_T$ in a certain way. From this renewal property and for $X$ given by Poisson point processes, it follows that $(|M_n|)_n$ is a Galton-Watson process and we may therefore characterise its survival probability by $\mathbb E(|M_1|)$. Fortunately, we can calculate this expected value quite well, resulting in both theorems from Section \ref{sec:results}.

Before we may get into a detailed analysis, let us fix some
notation. For $x,y \in V$, we write $x \sim y$ iff $\{x,y\} \in E$ and $y \geq x$ iff the unique 
shortest path from $y$ to the root contains $x$. 
A connected subgraph $S$ of $T$ with $x \in V(S)$ and $y \geq x$ for 
all $y \in S$ is called a subtree of $T$ with root $x$. 
Given such a subtree $S$ of 
$T$ with root $x$, we write $S^+$ for the enlargement of
$S$ by one layer, i.e., $S^+$ is the subtree with edge set 
$E(S^+) = \{ e \in E: e \cap V(S) \neq \emptyset \text{ and }e \neq e_x^-\}$, and with 
vertex set $V(S^+) = \{ x \in V: x \in e \text{ for some } e \in 
E(S^+)\}$.
%We write $\partial_V(S^+)=V(S^+)\setminus V(S)$ for brevity. 
Here, for $x \neq r$, $e_x^-=\{\pred(x),x\}$ denotes the %unique 
edge %adjacent to 
from $x$ to its predecessor $\pred(x)$. % whose removal from $T$ would disconnect $x$ from the root $r$ (in the tree) is denoted by $e_x^-$. 
Moreover, for a subgraph $S \subseteq T$ and a link configuration $X$ on $T$, we obtain the link configuration $X_S$ by retaining only the links on edges of $S$. 
Additionally, if $S \subseteq T$ is a subtree, $x \in V(S)$ and $t \in \Tbeta$, we write $\gamma_{S,x,t}$ for the loop induced by $X_S$ on 
$S$ that contains $(x,t)$. In particular, if $r \in V(S)$, %is a subtree of $T$ containing the root $r$, 
we write $\gamma_S=\gamma_{S,r,0}$ for brevity. 
Finally, we write 
$N^e := X^{e,\cross}(\Tbeta)+X^{e,\dbars}(\Tbeta)$ for the total number of links on an edge $e \in E$.

The basic observation that our method is based on is the following renewal property.

\medskip
\begin{lem} \label{lem:loop-splitting}
	Let $\{x,y\} \in E$ with $N^{\{x,y\}}=1$, i.e. $\supp( X^{\{x,y\},\cross}+X^{\{x,y\},\dbars}) = \{t\}$ for some $t \in \Tbeta$.  Denote by $S_x$ and $S_y$ the distinct subtrees of $T$ such that $x \in V(S_x)$, $y \in V(S_y)$,  $V(S_x) \cup V(S_y)=V$ and $\{x,y\} \notin E(S_x) \cup E(S_y)$. %vertices that are closer to $x$ and $y$, respectively. 
	Then for any loop $\gamma$ that crosses $\{x,y\}$, i.e. such that $\gamma \cap \{x\} \times U \neq \emptyset$ for any open neighbourhood $U$ of $t$, we have
	\begin{align} \label{eq:loop-splitting}
	\gamma %\cup \{(x,t),(y,t)\} 
	\subseteq \gamma_{S_x,x,t} \cup \gamma_{S_y,y,t}.
	\end{align}
	Moreover, if $|\gamma|<\infty$, we even have equality within (\ref{eq:loop-splitting}) except for the points $(x,t)$ and $(y,t)$.
	\begin{proof}
		%		First, we note that connecting points within $V\times \Tbeta$ according to $X$ or $X_{S_x}$ may only be different if the corresponding path crosses the edge $\{x,y\}$. 
		If $(x,t-)\in \gamma$% (or $(y,t+) \in \gamma_T$, respectively)%
		, then we distinguish between two cases:
		\begin{enumerate}[label=(\textit{\arabic*})]
			\item If $(x,t+) \in \gamma$% (or $(y,t-)\in \gamma_T$, respectively)%
			,
%			any path $\Gamma$ connecting two points within $V(S_x)\times \Tbeta$ that arriving at $(x,t-)$ may  
			points in %connecting points within 
			$V(S_x) \times \Tbeta$ are connected according to $X$ if and only if they are connected according to $X_{S_x}$ -- with the exception of the point $(x,t)$.
			\item If $(x,t+) \notin \gamma$% (or $(y,t-)\notin \gamma_T$, respectively)%
			, there is no possibility for the connecting path to come back to $S_x$ as the underlying graph is a tree and the path needs a link to cross from $y$ to $x$. Thus, ignoring the link on $\{x,y\}$ will increase the set of points within $V(S_x)\times \Tbeta$ that are connected.
		\end{enumerate}
		The same argument holds if we initially had $(x,t+) \in \gamma$ (with $t-$ and $t+$ exchanged) and this shows (\ref{eq:loop-splitting}). Moreover, if 
		$\gamma$ is a finite loop, then it is closed, meaning that two points within $\gamma$ are connected by two distinct paths. Thus, since the underlying graph is a tree and in comparison with the link configuration $\tilde X$ one obtains from $X$ by removing the link on $\{x,y\}$, the addition of this link affects at most one of these paths. Therefore, the points $(x,t-)$ and $(x,t+)$ that were connected w.r.t.\ $\tilde X$ remain connected w.r.t. $X$, compare with \cite[Proposition 2.2]{BU-18-1}. This means that the case (\textit{2}) cannot occur for 
		$|\gamma|<\infty$ and we obtain the asserted equality.
	\end{proof}
\end{lem}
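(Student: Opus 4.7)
The plan is to exploit the fact that $\{x,y\}$ carries the unique link permitting crossing between the disjoint subtrees $S_x$ and $S_y$. Since $T$ is a tree, any path satisfying the connectedness relation of Section 2 that goes from $V(S_x) \times \Tbeta$ to $V(S_y) \times \Tbeta$ must traverse the edge $\{x,y\}$, and by hypothesis this can only happen via the sole link at position $t$. Consequently, the loop $\gamma$ interacts with the other subtree only through the ``gateway'' half-neighbourhoods $(x, t\pm)$ and $(y, t\pm)$, at least one on each side belonging to $\gamma$ by the crossing assumption.

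To establish the inclusion $\gamma \subseteq \gamma_{S_x,x,t} \cup \gamma_{S_y,y,t}$, I would fix $(z,s) \in \gamma$ with $z \in V(S_x)$ (the other case being symmetric) and take a connecting path $\Gamma$ from $(z,s)$ to a gateway point of the form $(x, t\pm)$ lying in $\gamma$. By truncating $\Gamma$ at its first approach to such a gateway, I may assume $\Gamma$ never uses the link on $\{x,y\}$ and hence remains entirely within $V(S_x) \times \Tbeta$. Every link invoked by $\Gamma$ therefore belongs to $X_{S_x}$, so the same path witnesses $(z,s) \in \gamma_{S_x,x,t}$, and symmetrically for the $V(S_y)$ case.

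For the reverse inclusion when $|\gamma| < \infty$, the key is a dichotomy for the half-neighbourhoods $(x, t\pm)$: either (1) both belong to $\gamma$, in which case the connectedness relations induced by $X$ and $X_{S_x}$ agree on $V(S_x) \times \Tbeta$ with the sole exception of $(x,t)$; or (2) only one does, in which case deleting the link at $t$ fuses extra points from the $y$-side into $\gamma_{S_x,x,t}$ and destroys equality. Combining case (1) with the symmetric argument on the $y$-side yields the claimed equality off the two points $(x,t)$ and $(y,t)$, so the proof reduces to ruling out case (2) whenever $\gamma$ is finite---which is precisely the content of \cite[Proposition 2.2]{BU-18-1}. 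I expect the main obstacle to lie in the meticulous bookkeeping around the closed link set: the points $(x,t)$ and $(y,t)$ belong to no loop, so equality must be phrased only off of them, and the argument must carefully distinguish these from the half-neighbourhoods $(x, t\pm)$ and $(y, t\pm)$, which are the actual loop points where the crossing information is encoded.
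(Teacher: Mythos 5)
Your proposal is correct and follows essentially the same route as the paper: the inclusion rests on the observation that crossing between $S_x$ and $S_y$ is only possible through the unique link at $(\{x,y\},t)$ (your path-truncation phrasing is just a more explicit version of the paper's case analysis), and equality for finite loops is obtained exactly as in the paper, by the two-sided/one-sided dichotomy at $(x,t\pm)$ together with the citation of \cite[Proposition 2.2]{BU-18-1} to exclude the one-sided case.
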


Note that, in general, we do not know whether case (\textit{1}) or (\textit{2}) holds by just considering $X_{S_x}$. However, splitting a loop $\gamma(X)$ into $\gamma_{S_x,x,t}(X_{S_x})$ and $\gamma_{S_y,y,t}(X_{S_y})$ gives an upper bound for the propagation of $\gamma$ that is optimal in the sense that at least for $|\gamma|<\infty$ we have equality.

To apply this observation, assume that we are given a link configuration $X$ on $T$. Now, we explore the tree starting from the root and
%, for some $x \in V$, 
consider the multilink-cluster $\bar C_x$ rooted in some $x \in V$. That is, $\bar C_x$ is the maximal subtree with root $x$ such that each of its 
edges has at least two links, i.e. 
\begin{align*}
\bar C_x %= \bar C_x(X) 
:= \bigcup \{ S \subseteq T: S \text{ subtree with root } x, 
N^e \geq 2 \text{ for all } e \in E(S) \}.
\end{align*}
If this subtree is infinite, we may not be able to apply Lemma \ref{lem:loop-splitting} to divide the propagation of $\gamma_T$ into finite segments, therefore we set
\begin{align*}
C_x := 
\begin{cases}
\bar C_x & \text{ if } |\bar C_x| < \infty,\\
\emptyset & \text{ otherwise.} 
\end{cases}
\end{align*}

The exploration scheme 
is then defined recursively by
$M_0 := \{r\}$ and 
\begin{align*}
M_{n+1} &:= \bigcup_{x \in M_n} M_1^x, \qquad n=0,1,2,\ldots
\intertext{with}
M_1^x &:=\{y \in V(C_x^+)\setminus V(C_x) : y \in \gamma_{C_x^+,x,t_x}\},
\end{align*}
where $t_r=0$ and $\{t_x\}=\{t_x(X)\}:=\supp X^{e_x^-}$ for $x \neq r$. A sketch of these quantities is given in Figure \ref{fig:exploration-scheme}.

\begin{figure}
	\centering
	\begin{tikzpicture}
		% nodes of the tree
		{%
			\node[circle,scale=0.3,fill=myyellow,color=myyellow,draw] at (0,0) (r) {};
			\node[circle,scale=0.3,fill=myyellow,color=myyellow,draw] at (-3,2) (x1) {};
			\node[circle,scale=0.3,fill=myyellow,color=myyellow,draw] at (-4,4) (y1) {};
			{
				\node[circle,scale=0.3,color=mypurple,draw] at (r) {};
				\node[circle,scale=0.3,color=mypurple,draw] at (x1) {};
				\node[circle,scale=0.3,color=mypurple,draw] at (y1) {};
			}
			\node[circle,scale=0.3,fill=myyellow,color=myyellow,draw] at (0,2) (x2) {};%
		}
		\node[below] at (r) {{{{$r$}}}};
		\node[circle,scale=0.3,fill=black,color=black,draw] at (3,2) (x3) {};
		\node[circle,scale=0.3,fill=black,color=black,draw] at (-3,4) (y2) {};
		\node[circle,scale=0.3,fill=black,color=black,draw] at (-1,4) (y4) {};
		\node[circle,scale=0.3,fill=black,color=black,draw] at (0,4) (y5) {};
		\node[circle,scale=0.3,fill=black,color=black,draw] at (1,4) (y6) {};
		\node[circle,scale=0.3,fill=black,color=black,draw] at (2,4) (y7) {};
		\node[circle,scale=0.3,fill=black,color=black,draw] at (3,4) (y8) {};
		\node[circle,scale=0.3,fill=black,color=black,draw] at (4,4) (y9) {};
		\node[circle,scale=0.3,fill=black,color=black,draw] at (-4.3,6) (z1) {};
		\node[circle,scale=0.3,fill=black,color=black,draw] at (-4,6) (z2) {};
		\node[circle,scale=0.3,fill=black,color=black,draw] at (-3.3,6) (z4) {};
		\node[circle,scale=0.3,fill=black,color=black,draw] at (-3,6) (z5) {};
		\node[circle,scale=0.3,fill=black,color=black,draw] at (-2.7,6) (z6) {};
		\node[circle,scale=0.3,fill=black,color=black,draw] at (-2.3,6) (z7) {};
		\node[circle,scale=0.3,fill=black,color=black,draw] at (-2,6) (z8) {};
		\node[circle,scale=0.3,fill=black,color=black,draw] at (-1.7,6) (z9) {};
		\node[circle,scale=0.3,fill=black,color=black,draw] at (-1.3,6) (z10) {};
		\node[circle,scale=0.3,fill=black,color=black,draw] at (-1,6) (z11) {};
		\node[circle,scale=0.3,fill=black,color=black,draw] at (-0.7,6) (z12) {};
		{
			\node[circle,scale=0.3,fill=myred,color=myred,draw] at (-3.7,6) (z3) {};
			\node[circle,scale=0.3,fill=myred,color=myred,draw] at (-2,4) (y3) {};
		}
		{
			\node[circle,scale=0.3,color=mypurple,draw] at (z3) {};
			\node[circle,scale=0.3,color=mypurple,draw] at (y3) {};			
			\node[circle,scale=0.3,color=mypurple,draw] at (z8) {};
		}
		\node at (-2.5,7) (dots1) {$\vdots$};
		\node at (1.5,5) (dots2) {\rotatebox{90}{$\ddots$}};
		{%
			{%
				\draw[color=myyellow] (r) -- (x1) -- (y1);
				\draw[dashed,color=mypurple] (r) -- node {{\color{myblue}{$3$}}}%node {\visible<2->{\color{myblue}{$3$}}} 
				(x1) -- node {{\color{myblue}{$2$}}} (y1);%
			}
			\draw[color=myyellow] (r) -- node {{\color{myblue}{$2$}}} (x2);%
		}
		\draw[dotted] (y1) -- node {{\color{myblue}{$1$}}} (z1);
		\draw[dotted] (z4) -- (y2) -- node {{\color{myblue}{$0$}}} (x1);
		\draw[dotted] (x2) -- node {{\color{myblue}{$1$}}} (y4) -- (z10);
		\draw[dotted] (z2) -- node {{\color{myblue}{$0$}}} (y1);
		{
			\draw[dotted] (y1) -- (z3);
			\draw[dashed,color=mypurple] (y1) -- node {{\color{myblue}{$1$}}} (z3);
			\draw[dotted] (x1) -- (y3) -- (z7);
			\draw[dashed,color=mypurple] (x1) -- node {{\color{myblue}{$1$}}} (y3);
		}
		\draw[dotted] (z5) -- (y2) -- (z6);
		\draw[dotted] (z8) -- (y3) -- (z9);
		\draw[dotted] (z11) -- (y4) -- (z12);
		\draw[dotted] (y5) -- node {{\color{myblue}{$1$}}} (x2) -- node {{\color{myblue}{$1$}}} (y6);
		\draw[dotted] (r) -- node {{\color{myblue}{$0$}}} (x3) -- (y7);
		\draw[dotted] (y8) -- (x3) -- (y9);
	\end{tikzpicture}
	\caption{On the $3$-ary tree, the numbers {\color{myblue}{$N^e$}} of links on the edges $e \in E$ constitute the multilink-cluster {\color{myyellow}{$C_r$}}. Whereever the loop {\color{mypurple}{$\gamma_T$}} crosses an edge protruding from {\color{myyellow}{$C_r$}}, a vertex of {\color{myred}{$M_1$}} occurs.}
	\label{fig:exploration-scheme}
\end{figure}
Note that, for $n \in \mathbb N$, there is always exactly one link on the edge $e_x^-$ preceding any $x \in M_n$. Therefore, 
we may indeed apply Lemma \ref{lem:loop-splitting} to these edges and obtain that -- if $\gamma_T$ is finite -- the trace of $\gamma_T$ within $C_x^+$ coincides with $\gamma_{C_x^+,x,t_x}$. Thus, $\gamma_T$ reaches the boundary vertices $V(C_x^+)\setminus V(C_x)$ of $C_x^+$ if and only if the loop $\gamma_{C_x^+,x,t_x}$ does so and the latter information is encoded within $M_1^x$. On the other hand, if $\gamma_T$ is infinite, then $\gamma_{C_x^+,x,t_x}$ is an upper bound for the trace of $\gamma_T$ within $C_x^+$. This allows us to relate the survival/extinction of $(M_n)_n$ to the infiniteness/finiteness of $\gamma_T$.

\medskip
\begin{prop}\label{prop:exploration-scheme-conditions} Fix a link configuration $X$.
	\begin{enumerate}[label=\textnormal{(\textit{\alph*})}]
		\item If $\left| \bigcup_{n \in \mathbb N_0} M_n \right| %\sum_{n \in \mathbb N_0} |M_n|
		=\infty$, then $|\gamma_T|=\infty$.
		\item If $\left| \bigcup_{n \in \mathbb N_0} M_n \right| % $\sum_{n \in \mathbb N_0} |M_n|
		<\infty$ and $|\bar C_x|<\infty$ for all $x \in \bigcup_{n \in \mathbb N_0} M_n$, then $|\gamma_T|<\infty$.
	\end{enumerate}	
\end{prop}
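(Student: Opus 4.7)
The plan is to leverage the renewal property at single-link edges---namely $e_x^-$ for $x \in M_n$ with $n \geq 1$, together with the 1-link boundary edges of each $C_x^+$---to tie the global loop $\gamma_T$ to the local loops $\gamma_{C_x^+,x,t_x}$ used in the construction of the exploration scheme. The central tool is Lemma~\ref{lem:loop-splitting}, applied iteratively at these edges.

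For (\textit{a}), I would proceed by induction on $n \geq 0$, showing that every $x \in M_n$ lies in $\gamma_T$ in the sense that some $(x,t) \in \gamma_T$; combined with the fact that distinct $x \in \bigcup_n M_n$ give distinct vertices, this forces $|\gamma_T| = \infty$ as soon as $|\bigcup_n M_n| = \infty$. The base case $n=0$ is immediate from $(r,0) \in \gamma_T$. For the inductive step, assume $(x,t_x \pm \epsilon) \in \gamma_T$ for small $\epsilon$ (with $t_r := 0$) and take any $y \in M_1^x$. Since $y \in \gamma_{C_x^+,x,t_x}$ and $e_y^-$ carries exactly one link at $t_y$, the arcs of $(y,\cdot)$ adjacent to $t_y$ are connected to $(x,t_x)$ inside $C_x^+$. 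I would transfer this connection to $T$: at every interior vertex of $C_x^+$ (i.e.\ $V(C_x)$) the link structure agrees between $C_x^+$ and $T$, while at other leaves of $C_x^+$ traversed by the local path the extra links of $T$ force excursions into the corresponding child subtrees. Either every such excursion terminates---and the lifted trajectory reaches the arcs of $(y,\cdot)$ adjacent to $t_y$, giving $y \in \gamma_T$---or some excursion is infinite, in which case $|\gamma_T| = \infty$ already.

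For (\textit{b}), set $U := \bigcup_{x \in \bigcup_n M_n} V(C_x^+)$, which is a finite union of finite sets under the stated hypotheses; it suffices to show $\gamma_T \subseteq U \times \Tbeta$. Suppose for contradiction that some $(z,s) \in \gamma_T$ has $z \notin U$, and follow the unique tree-path from $r$ to $z$: each of its vertices lies in $\gamma_T$. Let $v$ be the first vertex on this path outside $U$, and set $v' := \pred v \in U$. By construction, $v'$ must be a leaf of some $C_x^+$ with $v' \notin M_1^x$, for otherwise the exploration would have continued and placed $V(C_{v'}^+)$---and hence $v$---inside $U$. The boundary edge $e_{v'}^-$ carries at most one link; since $\gamma_T$ traverses $v'$, it carries exactly one, at some $t_{v'}$. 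I would apply Lemma~\ref{lem:loop-splitting} at $e_{v'}^-$ and iterate splittings at the remaining 1-link boundary edges of $C_x^+$ to reduce the question of whether $\gamma_T$ crosses $e_{v'}^-$ to the same question for $\gamma_{C_x^+,x,t_x}$. Since $v' \notin M_1^x$ the local loop does not cross, so neither does $\gamma_T$, contradicting $v' \in \gamma_T$.

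The main obstacle in both parts is making the local-to-global transfer rigorous: extra links on edges from leaves of $C_x^+$ into their child subtrees could a priori alter connectivity between arcs on the interior of $C_x^+$. The saving observation is that every interior vertex of $C_x^+$ has all its incident edges inside $C_x^+$, so the arc structure there is identical in $C_x^+$ and $T$; iterating Lemma~\ref{lem:loop-splitting} at the 1-link boundary edges then confines the relevant connectivity question to the inside of $C_x^+$, where the definition of $M_1^x$ supplies the needed equivalence.
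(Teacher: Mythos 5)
Your overall route is the paper's route: exploit the renewal property of Lemma~\ref{lem:loop-splitting} at the single\-/link edges $e_x^-$ and at the boundary edges of $C_x^+$, note that interior vertices of $C_x^+$ see the same links locally and globally, and thereby tie $\gamma_T$ to the local loops defining $M_1^x$. Your part (\textit{b}) is essentially the paper's argument (the paper picks a maximal index along an infinite ray of $\gamma_T$ instead of your ``first vertex outside $U$'', and phrases the localisation as the inclusion $\gamma_T\cap V(C_x)\times\Tbeta\subseteq\gamma_{C_x,x,t_x}$); it only uses the unconditional inclusion direction of Lemma~\ref{lem:loop-splitting}, so it is sound, up to the easy observations that $v'$ cannot lie in the interior $V(C_{x''})$ of any explored cluster and that the hypothesis $|\bar C_x|<\infty$ is what makes $C_x=\bar C_x$ maximal, so that boundary edges indeed carry at most one link.

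In part (\textit{a}), however, the justification of the first horn of your dichotomy does not work as written. When the local path traverses a boundary leaf $w\neq y$ of $C_x^+$, the edges from $w$ to its children may carry \emph{several} links; a terminating excursion into such a child subtree can re-enter the circle $\{w\}\times\Tbeta$ at a different link, at a different position and direction, so ``every excursion terminates'' does not by itself imply that the lifted trajectory completes the traversal of $w$ and reaches the arcs of $(y,\cdot)$ adjacent to $t_y$. The correct pivot is the \emph{single-link} edge $e_w^-$ (resp.\ $e_x^-$, $e_y^-$): a finite loop that crosses a single-link edge must occupy both sides of the link (this is the equality clause of Lemma~\ref{lem:loop-splitting}, resting on \cite[Prop.~2.2]{BU-18-1}), so a wandering through the $w$-side either returns through that same link at the complementary arc-end or $\gamma_T$ is infinite. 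The paper packages this cleanly: assume $|\gamma_T|<\infty$ for contradiction, apply the equality version of Lemma~\ref{lem:loop-splitting} at $\{e_x^-\}\cup\big(E(C_x^+)\setminus E(C_x)\big)$ to get $\gamma_T\cap V(C_x)\times\Tbeta=\gamma_{C_x,x,t_x}$ up to isolated points, conclude $(\pred(y),t_y)$ lies on $\gamma_T$'s closure, and let the single link at $t_y$ force $y\in\gamma_T$, contradicting maximality of the last explored vertex in $\gamma_T$. Rewiring your induction step through this finite-loop equality (rather than through excursions across the multi-link child edges) closes the gap; with that repair your proof coincides with the paper's.
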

\begin{proof} 
Let us begin with two observations that hold for any $x \in \bigcup_n M_n$. On the one hand, for $y \in V(C_x^+)\setminus V(C_x)$ with $N^{e_y^-}=1$, we have $y \in M_1^x$ iff $(\pred(y),t_y) \in \gamma_{C_x,x,t_x}$%, where $\pred(y)$ denotes the predecessor of $y$
. On the other hand, we may apply Lemma \ref{lem:loop-splitting} to edges $\{e_x^-\}\cup \big(E(C_x^+) \setminus E(C_x)\big)$ to find
\begin{align} \label{eq:gammaTrestricted}
%\gamma_T \big |_{C_x} := 
\gamma_T \, \cap \, V(C_x) \times \Tbeta \subseteq \gamma_{C_x,x,t_x}.
\end{align} 	
Moreover, for $|\gamma_T|<\infty$ we even have equality within (\ref{eq:gammaTrestricted}) up to finitely many isolated points.\\
Now, to prove (\textit{a}), suppose that $\bigcup_{n \in \mathbb N_0} M_n$ is infinite and pick a sequence $(x_n)_{n \in \mathbb N_0}$ with $r=x_0 \leq x_1 \leq \ldots$ as well as $x_n \in M_n$ for all $n$. If we further assume that $|\gamma_T|<\infty$, we may set $n_0 :=\max \{n : x_n \in \gamma_T\}$, $x:=x_{n_0}$ and $y := x_{n_0+1}\in M_1^x$. Combining the two observations from the beginning of this proof, this gives $y \in \gamma_T$ -- in contradiction to the maximality of $n_0$.\\
For (\textit{b}), suppose that $|\gamma_T|=\infty$ and choose $(x_k)_{k \in \mathbb N_0} \subseteq V$ with $r=x_0 \leq x_1 \leq \ldots$, $x_0 \sim x_1 \sim \ldots$ as well as $x_k \in \gamma_T$ for all $k$. If, however, $\bigcup_n M_n$ is finite, then there is $k_0:=\max \{k : x_k \in \bigcup_{n \in \mathbb N_0} M_n\}$%, in particular $M_1^x = \emptyset$ for 
. Thus, we may set $x:=x_{k_0}$ and $y:=x_{k_1+1}$, where $k_1:=\max \{k \geq k_0 : x_k \in \bar C_x\}$ is finite by assumption. By the second preliminary observation, we find $(\pred(y),t_y) \in \gamma_{C_x,x,t_x}$ and thus $y \in M_1^x$ in contradiction to maximality of $k_0$.
\end{proof}

Now, let us assume that we are given link configurations at random. As mentioned before, for each realisation we 
may trace the (possible) propagation of $\gamma_T$ within the finite segments $C_x$ for some $x \in M_n$, $n \in \mathbb N_0$, by considering the loop $\gamma_{C_x,x,t_x}$,
and the random variables $M_1^x$ keep track where to start with new segments. Since this only relies on \emph{local} information about $X$, it is no surprise that $(|M_n|)_n$ forms a Galton-Watson process under natural conditions on the distribution of $X$.

\medskip
\begin{lem} \label{lem:Mn-GW}
	Let $(X^{e,\star})_{e \in E, \star \in \{ {\cross},{\dbars}\}}$
	be a family of admissible 
	point processes on $\Tbeta$. Assume that 
	the family $(X^{e,{\cross}}, X^{e,{\dbars}})_{e \in E}$ is 
	independent and identically distributed, and that each 
	$X^{e,\star}$ is invariant under shifts in $\Tbeta$. 
	Then %$\mathbb P(\left| 
	$\bigcup_{n \in \mathbb N_0} M_n $ %\right| < \infty)=1$
	is infinite with positive probability if and only if $\mathbb E(|M_1|)>1$.
\end{lem}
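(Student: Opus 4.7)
The plan is to identify $(|M_n|)_{n \geq 0}$ as a Galton-Watson chain with offspring distribution equal to the law of $|M_1|$, and then invoke the classical survival dichotomy. Two ingredients are needed: a conditional-independence/measurability statement that produces the branching structure, and a shift-invariance argument showing that each offspring random variable has the same law as $|M_1|$.

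For the branching step, fix $n \geq 1$ and let $\mathcal{F}_n$ be the $\sigma$-algebra generated by the restriction of $X$ to $\bigcup_{k<n}\bigcup_{x \in M_k} E(C_x^+)$. By construction $M_0,\ldots,M_n$ are $\mathcal{F}_n$-measurable, and so is $t_y$ for every $y \in M_n$, since $t_y$ is the unique atom of $X^{e_y^-}$ and $e_y^-$ lies in $E(C_x^+)$ for the unique $x \in M_{n-1}$ from which $y$ was created. The crucial geometric observation is that the subtree of edges strictly below $y$ is disjoint from every $E(C_x^+)$ used to define $\mathcal{F}_n$: by definition $y$ is a boundary vertex of $C_x^+$, so no edge of $E(C_x^+)$ reaches past $y$. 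Combined with the fact that $M_n$ is an antichain in $T$ (easy induction on $n$), this means the restrictions of $X$ to the subtrees below the various $y \in M_n$ are mutually independent and independent of $\mathcal{F}_n$; by the i.i.d.\ assumption and the self-similarity of the $d$-ary tree, each such restriction has the same law as $X$ on $T$ itself.

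The third step is purely distributional. Each $M_1^y$ is a deterministic functional of the pair $(X|_{C_y^+}, t_y)$, with $X|_{C_y^+}$ independent of $\mathcal{F}_n$ and $t_y$ being $\mathcal{F}_n$-measurable. Conditioning on $t_y = s$ and applying the assumed shift invariance of the point processes to time-shift $X|_{C_y^+}$ by $-s$, one obtains
\begin{align*}
\mathbb{P}(|M_1^y| = k \mid \mathcal{F}_n, t_y = s) = \mathbb{P}(|M_1^r| = k) \quad \text{for every } s \in \Tbeta,\ k \in \mathbb{N}_0,
\end{align*}
where on the right-hand side one uses $t_r = 0$ on the full tree. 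Consequently $(|M_1^y|)_{y \in M_n}$ are, conditionally on $\mathcal{F}_n$, i.i.d.\ copies of $|M_1|$, so $(|M_n|)_n$ is a Galton-Watson process started from one individual. The offspring law is non-degenerate (for instance, $\{M_1 = \emptyset\}$ occurs on the positive-probability event that no edge incident to $r$ carries any link), whence the classical extinction-versus-survival dichotomy delivers the claimed equivalence. The main subtlety lies in the second paragraph: one has to keep $t_y$, which is random but $\mathcal{F}_n$-measurable, strictly separated from the fresh randomness of $X|_{C_y^+}$ before shift invariance can be invoked cleanly.
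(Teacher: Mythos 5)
Your argument is correct and follows essentially the same route as the paper: the paper encodes the identical renewal structure through the generating-function identity $\varphi_{n+1}=\varphi_n\circ\varphi_1$ (summing over the possible configurations $\Pi$ of $M_n$, splitting $X$ into the explored part and the fresh subtrees below the frontier vertices, and using the time shift plus tree isomorphism $\Theta_{x,t_x}$), which is exactly the probability-generating-function form of your conditional-i.i.d.\ offspring statement given the exploration filtration. One minor caveat: to rule out the degenerate case of offspring identically equal to $1$ the paper invokes $\mathbb P(|M_1|=1)<1$, whereas your justification (positive probability that no edge incident to $r$ carries a link) is not implied by the lemma's general hypotheses on the point processes, although it does hold in the Poisson setting used everywhere else in the paper.
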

\begin{proof}
	To begin with, we have
	\begin{align*}
	\varphi_{n+1}(w) = \mathbb E\left(w^{|M_{n+1}|}\right) = \sum_{\Pi} \mathbb E \left( w^{|M_{n+1}|} \mathbf{1}_{\{M_n=\Pi\}} \right),
	\end{align*}
	for $w \in [0,1]$, where $\varphi_n$ denotes the probability generating function of $|M_n|$ and where the sum runs over all subsets $\Pi$ of the leaves of some finite subtree of $T$. Now fix $\Pi$, let $S_x$ be the subtree of $T$ with root $x \in \Pi$ and set $S_\Pi:=T\setminus \bigcup_{x \in \Pi}S_x$ to be tree containing all remaining edges. Furthermore, for a realisation of $X$ within $\{M_n=\Pi\}$ we identify $X$ with $(X_{S_\Pi},(X_{S_x})_{x \in \Pi})$, where $X_S$ represents the links on edges $e \in E(S)$. Then, by definition, we have
	\begin{align*}
	|M^x_1(X)| = |M_1 (\Theta_{x,t_x(X_{S_\Pi})}(X_{S_x}))|
	\end{align*}
	for $x \in \Pi$. Here, $\Theta_{x,t}$ takes the links of $X_{S_x}$ and applies a position shift by $t$ as well as a spatial shift by some tree-isomorphism from $S_x$ to $T$ to these links. Since the first of these shifts leaves the distribution of $X_{S_x}$ invariant and the second maps it to the distribution of $X$, Fubini's theorem implies
	\begin{align*}
	\varphi_{n+1}(w) %\sum_{\Pi}\mathbb E\left(w^{|M_{n+1}|}\mathbf{1}_{\{M_n=\Pi\}} \right) 
	&= \sum_{\Pi}\mathbb E \left( \prod_{x \in \Pi} w^{|M_1^x|} \mathbf{1}_{\{M_n=\Pi\}} \right) \\
% ************** Do not remove **************
%	&= \sum_{\Pi} \int \mathbb P \circ X_{S_\Pi}^{-1}(\mathrm{d}\omega_{S_\Pi}) \mathbf{1}_{\{M_n=\Pi\}}(\omega_{S_\Pi}) \\
%	& \qquad \quad \prod_{x \in \Pi} \int \mathbb P \circ X_{S_x}^{-1} (\mathrm{d}\omega_{S_x}) w^{|M_1 \circ \Theta_{x,t_x(\omega_{S_\Pi})}(\omega_{S_x})|}\\
%	&= \sum_{\Pi} \int \mathbb P \circ X_{S_\Pi}^{-1}(\mathrm{d}\omega_{S_\Pi}) \mathbf{1}_{\{M_n=\Pi\}}(\omega_{S_\Pi}) \\
%	& \qquad \quad \prod_{x\in \Pi} \int \underbrace{\mathbb P \circ X_{S_x}^{-1} \circ \Theta_{x,t_x(\omega_{S_\Pi})}^{-1}}_{\mathbb P \circ X^{-1}} (\mathrm{d}\omega_{S_x}) w^{|M_1(\omega_{S_x})|}\\
% ************** **************
	&= \sum_{\Pi} \int \mathbb P(\mathrm{d}X_{S_\Pi}) \mathbf{1}_{\{M_n=\Pi\}}(X_{S_\Pi}) \\
	& \qquad \quad \prod_{x \in \Pi} \int \mathbb P (\mathrm{d}X_{S_x}) w^{|M_1 \circ \Theta_{x,t_x(X_{S_\Pi})}(X_{S_x})|}\\
	&= \sum_{\Pi} \mathbb E \left( \mathbb E(w^{|M_1|})^{|\Pi|} \mathbf{1}_{\{M_n=\Pi\}} \right) \\
	&= \mathbb E\left(\varphi_1(w)^{|M_n|}\right) = \varphi_n\circ \varphi_1(w).
	\end{align*}
	Thus, by $\mathbb P(|M_1|=1)<1$ and since $|M_n|=0$ implies $|M_{n+1}|=0$, the standard (fixed-point) argument from the theory of Galton-Watson processes implies the asserted equivalence (compare \cite[chapter I.3 and I.5]{AthNey}).
\end{proof}

Note that -- with a little bit more effort -- we could also show that $(|M_n|)_n$ is a Galton-Watson process. However, the stated characterisation of survival suffices for our purposes. In particular, by
Proposition \ref{prop:exploration-scheme-conditions} and Lemma \ref{lem:Mn-GW} it is clear that we need to be interested
in ${\mathbb E}(|M_1|)$. For concreteness and because this is the most important situation, we only study this quantity 
in the case of the Poisson point processes described in the 
previous section.

For a concise presentation, we set
\begin{align*}
\begin{split}
{\mathcal S}_d := \{ (S,n):& \;S \text{ is a finite subtree of } T \text{ with root }r, \\
& 
\;
n \colon E(S) \to {\mathbb N_0} \text{ with } n(e) \geq 2 \text{ for all } e \in E(S) \}.
\end{split}
\end{align*}
We also write the shorthand $n(S) := \sum_{e \in E(S)} n(e)$, $n!:=\prod_{e\in E(S)}n(e)!$ and define the event
\begin{align*}
A_{S,n}:=\{C_r = S \text{ and }N^e =n(e) \text{ for all }e \in E(S)\}
\end{align*}
for $(S,n) \in \mathcal S_d$. 
By convention, we assume that $(S_0,n_0) \in \mathcal S_d$, where $S_0=(\{r\},\emptyset)$ is the trivial tree
and where $n_0$ is the empty function with $n_0(S_0)=0$.

\medskip
\begin{lem} \label{lem:EM1-rep}
	Let $(X^{e,\star})_{e \in E, \star \in \{{\dbars},{\cross}\}}$ 
	be independent homogeneous Poisson point processes on $\Tbeta$, with 
	rate $u$ for $X^{e, {\cross}}$ and $(1-u)$ for 
	$X^{e, {\dbars}}$. Then there exist nonnegative coefficients 
	$p_{S,n}(d,u)$ (independent of $\beta$ and polynomial in $u$) with 
	\begin{align} \label{eq:EM1rep}
	{\mathbb E}(|M_1|) = \sum_{(S,n) \in {\mathcal S}_d} \left( {\mathrm{e}}^{-\beta d}
	(1+\beta)^{d-1} \right)^{|V(S)|} \beta^{n(S)+1}  p_{S,n}(d,u).
	\end{align}
\end{lem}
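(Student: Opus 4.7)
The approach is to partition the sample space according to $C_r$ together with the link counts on its edges, express $|M_1|$ as a sum of indicators, and factor the probability of each atom using Poisson independence combined with a local observation about how loops interact with ``dead-end'' boundary edges. The events $\{A_{S,n}\}_{(S,n) \in \mathcal S_d}$ are disjoint with union $\{|\bar C_r|<\infty\}$; on the complement, $C_r=\emptyset$ forces $C_r^+=\emptyset$ and hence $M_1=\emptyset$, so that event contributes nothing to $\mathbb E(|M_1|)$. Using $C_r^+=S^+$ on $A_{S,n}$, I would write
\begin{align*}
\mathbb E(|M_1|) = \sum_{(S,n) \in \mathcal S_d}\; \sum_{y \in V(S^+) \setminus V(S)} \mathbb P\big(A_{S,n} \cap \{y \in \gamma_{S^+, r, 0}\}\big).
\end{align*}
Each $y\in V(S^+)\setminus V(S)$ is incident to a unique edge $e_y^- \in E(S^+)\setminus E(S)$, and $y\in\gamma_{S^+,r,0}$ requires $N^{e_y^-}\ge 1$; together with $N^{e_y^-}\le 1$ on $A_{S,n}$, this pins down $N^{e_y^-}=1$.

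The key input is the \emph{dead-end observation}: for any $z \in V(S^+)\setminus V(S)$ with $z\neq y$, if $e_z^-$ carries a single link at position $t$, the loop segment through that link leaves $(\pred(z),t)$, traverses $\{z\}\times\Tbeta$, and re-enters $\pred(z)$ at position $t$ in the same direction as it would without the link, for both link types $\cross$ and $\dbars$. Consequently, on $\{N^{e_y^-}=1\}$ the event $\{y\in \gamma_{S^+,r,0}\}$ coincides with $\{y\in \gamma'\}$, where $\gamma'$ is defined using only the links on $E(S)\cup\{e_y^-\}$. By independence of the edge processes and the counting identities $|E(S)| = |V(S)|-1$ and $|E(S^+)\setminus E(S)|=(d-1)|V(S)|+1$ (both consequences of $\sum_{v\in V(S)}\deg_T(v)=(d+1)|V(S)|-1$ combined with $2|E(S)|+|E(S^+)\setminus E(S)|=\sum_{v\in V(S)}\deg_T(v)$), the probability factorises as
\begin{align*}
\mathbb P\big(A_{S,n}\cap\{y\in\gamma_{S^+,r,0}\}\big) = \big(\mathrm{e}^{-\beta d}(1+\beta)^{d-1}\big)^{|V(S)|}\frac{\beta^{n(S)+1}}{n!}\, q_{S,n,y}(u),
\end{align*}
where $q_{S,n,y}(u)$ is the conditional probability of $y\in\gamma'$ given $N^e=n(e)$ for $e\in E(S)$ and $N^{e_y^-}=1$. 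Concretely, the factor $\beta\,\mathrm{e}^{-\beta}$ comes from the single link on $e_y^-$, the factor $\mathrm{e}^{-\beta}\beta^{n(e)}/n(e)!$ from each edge of $S$, and $(\mathrm{e}^{-\beta}(1+\beta))^{(d-1)|V(S)|}$ from the remaining $(d-1)|V(S)|$ boundary edges.

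To finish, I would argue that $q_{S,n,y}$ is independent of $\beta$ and polynomial in $u$. Conditionally on the link counts, link positions are i.i.d.\ uniform on $\Tbeta$, and the event $\{y\in\gamma'\}$ is determined by the cyclic order of those positions together with the link types, a datum invariant under rescaling $\Tbeta \to \mathbb T_1$; so $\beta$ drops out of $q_{S,n,y}$. Each of the finitely many links is independently of type $\cross$ with probability $u$ and $\dbars$ with probability $1-u$, making $q_{S,n,y}$ a finite sum of monomials in $u$ and $1-u$. Setting $p_{S,n}(d,u):=\frac{1}{n!}\sum_{y} q_{S,n,y}(u)$ and summing over $y$ yields the stated identity. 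I expect the subtlest step to be the dead-end observation, which requires a careful inspection of the path $\Gamma$ through an isolated link on a leaf edge for both link types; without it, the sum over link configurations on the remaining boundary edges would not collapse to the clean $\beta$-independent factor above.
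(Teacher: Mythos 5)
Your proposal is correct and follows essentially the same route as the paper: decompose over the events $A_{S,n}$, factor the probability using independence of the edge processes and the counts $|E(S)|=|V(S)|-1$, $|E(S^+)\setminus E(S)|=(d-1)|V(S)|+1$, and obtain $\beta$-independence and polynomiality in $u$ from the conditional uniformity of link positions and Bernoulli types. Your ``dead-end observation'' is exactly the content of the paper's identity $\{y\in\gamma_{S^+},N^{e_y^-}=1\}\cap A_{S,n}=\{(\pred(y),t_y)\in\gamma_S,N^{e_y^-}=1\}\cap A_{S,n}$, just argued explicitly for pendant edges carrying a single link.
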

For each $(S,n) \in \mathcal S_d$, the polynomials $p_{S,n}(d,u)$ can be calculated: Example \ref{ex:order0} will deal with the most basic case and within Section \ref{sec:combinatorics}, we will see how to reduce this calculation to a combinatorial problem for arbitrary $(S,n)$.
\begin{proof}[Proof of Lemma \ref{lem:EM1-rep}]
	We decompose 
	\[
	{\mathbb E}(|M_1|) = 
	\sum_{(S,n) \in {\mathcal S}_d} {\mathbb P} \big( A_{S,n} \big) 
	{\mathbb E} \big( |M_1| \, \big| \, A_{S,n} \big). 	 
	\]
	By independence, and using also the facts that $|E(S^+) \setminus
	E(S) | = d|V(S)| -|E(S)|$ and $|E(S)| = |V(S)| - 1$, we find 
	\[
	\begin{split}
	{\mathbb P} \big( A_{S,n} \big) & = \prod_{e \in E(S)}
	{\mathbb P}(N^e = n(e)) \prod_{e \in E(S^+) \setminus E(S)} 
	{\mathbb P}( N^e \leq 1)  \\
	& = \frac{\prod_{e \in E(S)} \beta^{n(E)} 
		}{\prod_{e \in E(S)} n(e)!} 
	{\mathrm{e}}^{- \beta |E(S)|} \big( (1+\beta) {\mathrm{e}}^{-\beta} \big)^{d|V(S)| -|E(S)|}  \\
	& = \frac{\beta^{n(S)}}{n!}  
	{\mathrm{e}}^{- \beta  d |V(S)|} (1+\beta)^{(d-1)|V(S)| + 1}.	
	\end{split}
	\]
	On the other hand, 
	\[
	{\mathbb E} \big( |M_1| \, \big| \, A_{S,n} \big)
	= \sum_{y \in V(S^+) \setminus V(S)} 
	{\mathbb P} \big ( y\in \gamma_{S^+} \, \big| 
	\, A_{S,n} \big), 
	\]
	and 
	the term in the sum on the right hand side above can be 	
	written as 
	\[
	{\mathbb P} \big ( y \in \gamma_{S^+} \, \big| 
	\, A_{S,n}, N^{e_y^-} = 1 \big) 
	{\mathbb P}\big( N^{e_y^-} = 1 \, \big| \,  
	A_{S,n} \big),
	\]
	where we used that $y \in \gamma_{S^+}$ implies $N^{e_y^-}=1$.
	Now, for all $y$, the second factor above is equal to 
	${\mathbb P}( N^{e_y^-} = 1 | N^{e_y^-} \leq 1) = \frac{\beta}{1+\beta}$ 
	by independence. 
	Moreover, the first factor does not depend on $\beta$: By
	\begin{align*}
	\{y \in \gamma_S^+, N^{e_y^-}=1\} \cap A_{S,n} = \{(\pred(y),t_y)\in \gamma_S, N^{e_y^-}=1\} \cap A_{S,n},
	\end{align*}
	we see that the event depends on the link configuration on edges $e \in E(S) \cup \{e_y^-\}$ and for these edges, the total number $N^e$ of links on each $e$ is fixed. 
	By regarding, for each edge $e$,
	the random variables 
	$(X^{e,\star})_{\star \in \{\cross,\dbars\}}$ 
	as the result of first determining the total
	number of links on $e$ by a Poisson random variable with 
	expectation $\beta$, then determining their type by 
	a Bernoulli random variable with success probability $u$, 
	and then determining the position of their link(s) by a uniform
	random variable on %$\Tbeta$ 
	$\{(s_1,\ldots,s_{N^e}) \in \mathbb T_\beta^{N^e} : s_1\leq \ldots\leq s_{N^e}\}$,
	one sees that ${\mathbb P} ( (\pred(y),t_y) \in \gamma_{S} \, | 
	\, A_{S,n}, N^{e_y^-} = 1 )$ is independent of $\beta$ and polynomial in $u$. Therefore, the claim follows when we put 
	\begin{align}\label{eq:pSn-def}
	p_{S,n}(d,u) := \frac{1}{n!} %{\prod_{e \in E(S)} n(e)!}
	\sum_{y \in V(S^+) \setminus V(S)} {\mathbb P} \big ( (\pred(y),t_y)\in \gamma_{S} \, \big| \, A_{S,n}, N^{e_y^-} = 1 \big).
	\end{align}
\end{proof}

\begin{ex}[Pattern of order $0$] \label{ex:order0}
	The simplest case for $(S,n) \in \mathcal S_d$ is $(S_0,n_0)$ with $S_0=(\{r\},\emptyset)$ being the trivial tree% and $n_0(S_0)=0$
	. Then we have 
	\begin{align*}
	p_{S_0,n_0}(d,u) = \sum_{y \sim r} \underbrace{\mathbb P \left((r,t_y) \in \gamma_{S_0} \, \big| 
		\, C_r=S_0, N^{e_y^-}=1\right)}_{=1\text{ since }\gamma_{S_0}=\{r\}\times \Tbeta} =d.
	\end{align*}
	Note that this is constant in $u$ 
	due to the fact that we do not place any link onto $E(S_0)$ and therefore we don't
	need to distinguish between different types of links. 
\end{ex}

We shall now restrict our attention even further, namely to the case 
$\beta \leq d^{-1/2}$. 
In this case, 
\[
{\mathbb P}(N^e \geq 2) \leq 1 - {\mathrm{e}}^{-d^{-1/2}} ( 1 + d^{-1/2}) < 1/d,
\]
so the cluster of edges that carry two or more links does not 
percolate on the $d$-ary tree. In particular, we almost surely have $|\bar C_x|<\infty$ for all $x \in \bigcup_{n \in \mathbb N_0} M_n$ and by 
combining the results of this section,
we obtain the
following proposition that contains a large portion of the 
proof of Theorem \ref{thrm:sharp-phase-transition}. 
For clarity, we denote the dependence 
of quantities on $\beta$ explicitly below. 

\medskip
\begin{prop}
\label{prop:betac-characterisation}
	The map $\beta \mapsto {\mathbb E}_\beta(|M_1|)$ is strictly increasing and continuous
	on $(0,d^{-1/2}%/\sqrt{d}
	]$. Moreover, the following statements are 
	equivalent:
	\begin{enumerate}[label=\textnormal{(\textit{\alph*})}]
	\item There is a unique and sharp phase transition within $(0,d^{-1/2})$, i.e. there exists a unique $\beta_c \in (0,d^{-1/2})$ such that 
	$\mathbb P_\beta(|\gamma_T| < \infty)=1$ for $\beta \in (0,d^{-1/2})$ if and only if 
	$\beta \leq \beta_c$.
	\item ${\mathbb E}_{\beta=d^{-1/2}}(|M_1|) > 1$.
	\end{enumerate}
	If one (then both) of the above statements holds, then 
	$\beta_c$ is the unique solution to the equation 
	${\mathbb E}_\beta(|M_1|) = 1$, $\beta \in (0,d^{-1/2})$.

	\begin{proof}
		Writing $f_{S,n}(\beta) = \left( {\mathrm{e}}^{-\beta d}
		(1+\beta)^{d-1} \right)^{|V(S)|} \beta^{n(S)+1} p_{S,n}(d,u)$ for the summands within (\ref{eq:EM1rep}) and with $|V(S)|=|E(S)|+1$, we compute 
		\begin{align*}
		\partial_\beta \ln f_{S,n}(\beta) %= (n(S) + 1)/\beta - d |V(S) + (d-1) |V(S)| / (1+ \beta).
		&= \frac{1}{\beta} \left( n(S)-|E(S)|+|V(S)|\frac{1-\beta^2 d}{1+\beta} \right).
		\end{align*}
		Since $n(S)- |E(S)| \geq 2|E(S)|-|E(S)|\geq 0$, this implies 
		\begin{align*}
		\partial_\beta \ln f_{S,n}(\beta) \geq \frac{1}{\beta} |V(S)| 
		\frac{ 1 - \beta d^2}{1+\beta} > 0
		\end{align*}
		whenever $\beta < d^{-1/2}$. By Lemma \ref{lem:EM1-rep}, this shows strict 
		monotonicity. 		
		A direct consequence is that for any finite subset 
		$\hat{\mathcal S}_d$ of $\mathcal S_d$ we find 
		\begin{align*}
		\sup_{\beta \in (0,d^{-1/2} %1/\sqrt{d}
			]} 
		\Big|\mathbb E_\beta(|M_1|) - \sum_{(S,n) \in \hat{ 
		\mathcal S}_d} f_{S,n}(\beta) \Big| = 
		\sum_{(S,n) \notin \hat 
		{\mathcal S}_d} f_{S,n}(d^{-1/2}).
		\end{align*}
		Furthermore, for $\beta = d^{-1/2}$, the expected size of the percolation cluster $\bar C_r$ is finite and thus, $\mathbb E_{\beta = d^{-1/2}}(|M_1|) \leq d \, \mathbb E_{\beta =d^{-1/2}}(|V(\bar C_1)|) < \infty$.	 		 		
		This shows that 
		the series $\sum_{(S,n) \in \mathcal S_d} f_{S,n}(\cdot)$
		of continuous functions converges uniformly on 
		$[0, d^{-1/2}]$, thus its limit
		$\mathbb E_\beta(|M_1|)$ is continuous. \\
		To show the remaining equivalence, note that 
		$\lim_{\beta \downarrow 0} \mathbb E_\beta(|M_1|)=0$.
		Thus, by continuity and monotonicity, there is at 
		most one solution $\beta_c$ of the
		equation $\mathbb E_\beta(|M_1|)=1$ in the interval 
		$(0,d^{-1/2})$, and a necessary and sufficient 
		condition for the existence of such a solution is 
		$\mathbb E_{\beta = d^{-1/2}}(|M_1|) > 1$. 
		Moreover, in this case monotonicity
		implies 
		$\mathbb E_{\beta}(|M_1|) > 1$ for all 
		$\beta \in (\beta_c, d^{-1/2}]$ and $\mathbb E_\beta(|M_1|)\leq 1$ for $\beta \in (0,\beta_c]$. 
		Finally, by Lemma \ref{lem:Mn-GW} and Proposition \ref{prop:exploration-scheme-conditions}, the result follows.
	\end{proof}
\end{prop}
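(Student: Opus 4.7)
The plan is to deduce all three conclusions from the series representation
\begin{equation*}
\mathbb{E}_\beta(|M_1|) = \sum_{(S,n) \in \mathcal{S}_d} f_{S,n}(\beta), \qquad f_{S,n}(\beta) = \bigl(e^{-\beta d}(1+\beta)^{d-1}\bigr)^{|V(S)|} \beta^{n(S)+1} p_{S,n}(d,u),
\end{equation*}
furnished by Lemma \ref{lem:EM1-rep}. Since the coefficients $p_{S,n}(d,u)$ are nonnegative and $\beta$-independent, strict monotonicity should be proved term by term: a direct computation of $\partial_\beta \log f_{S,n}$ combined with $|E(S)| = |V(S)| - 1$ and the admissibility constraint $n(S) \geq 2|E(S)|$ ought to produce a lower bound proportional to $(1 - \beta^2 d)/(1+\beta)$, which is strictly positive exactly for $\beta \in (0, d^{-1/2})$. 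This is what pins the whole analysis to the interval $(0, d^{-1/2}]$.

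For continuity the essential point is uniform summability of the series on the closed interval. Because each $f_{S,n}$ is increasing, the tail $\sum_{(S,n) \notin \hat{\mathcal{S}}_d} f_{S,n}(\beta)$ over any cofinite $\hat{\mathcal{S}}_d$ is dominated on $(0, d^{-1/2}]$ by its value at $\beta = d^{-1/2}$, so continuity reduces to showing $\mathbb{E}_{d^{-1/2}}(|M_1|) < \infty$. This is the step I expect to be the main obstacle. For it, I will use the crude bound $|M_1| \leq d\,|V(\bar{C}_r)|$, since every element of $M_1$ is a child in $T$ of some vertex of $\bar{C}_r$, together with the fact (already flagged in the excerpt) that $\mathbb{P}(N^e \geq 2) = 1 - e^{-\beta}(1+\beta) < 1/d$ whenever $\beta \leq d^{-1/2}$. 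The doubly-linked cluster $\bar{C}_r$ is then stochastically dominated by a subcritical Galton--Watson tree on $T$, so $\mathbb{E}_{d^{-1/2}}|V(\bar{C}_r)| < \infty$. The same domination yields $|\bar{C}_x| < \infty$ almost surely simultaneously for all $x$, a fact that will be reused at the final step.

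The equivalence (\textit{a}) $\Leftrightarrow$ (\textit{b}) then follows by a standard intermediate-value argument. As $\beta \downarrow 0$ only the trivial-tree term $f_{S_0, n_0}(\beta) = d\beta \cdot e^{-\beta d}(1+\beta)^{d-1}$ survives at first order, all other $f_{S,n}$ being $O(\beta^{n(S)+1})$ with $n(S) \geq 2$; hence $\mathbb{E}_\beta(|M_1|) \to 0$ as $\beta \downarrow 0$. Combined with continuity and strict monotonicity, the equation $\mathbb{E}_\beta(|M_1|) = 1$ has a (necessarily unique) solution $\beta_c \in (0, d^{-1/2})$ if and only if $\mathbb{E}_{d^{-1/2}}(|M_1|) > 1$. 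To identify this $\beta_c$ with the critical parameter appearing in (\textit{a}), I invoke Lemma \ref{lem:Mn-GW} to translate $\mathbb{E}_\beta(|M_1|) > 1$ into positive survival probability of $\bigcup_n M_n$, and then Proposition \ref{prop:exploration-scheme-conditions} to turn this into $\mathbb{P}_\beta(|\gamma_T| = \infty) > 0$; the hypothesis $|\bar{C}_x| < \infty$ a.s.\ needed for part (\textit{b}) of that proposition is exactly what the subcritical-cluster step above provides, so both directions of the sharp phase transition are captured on $(0, d^{-1/2}]$.
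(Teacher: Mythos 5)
Your proposal is correct and follows essentially the same route as the paper's own proof: the same termwise decomposition $\mathbb{E}_\beta(|M_1|) = \sum f_{S,n}(\beta)$, the same logarithmic-derivative bound using $|E(S)|=|V(S)|-1$ and $n(S)\geq 2|E(S)|$ to pin monotonicity to $\beta < d^{-1/2}$, the same monotone-tail argument for uniform convergence reducing continuity to $\mathbb{E}_{d^{-1/2}}(|M_1|)<\infty$ via the crude bound $|M_1|\leq d|V(\bar C_r)|$ and subcriticality of the two-link cluster, the same intermediate-value argument, and the same final appeal to Lemma \ref{lem:Mn-GW} and Proposition \ref{prop:exploration-scheme-conditions}. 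You also correctly flag the role of $|\bar C_x|<\infty$ a.s.\ as the hypothesis needed to invoke Proposition \ref{prop:exploration-scheme-conditions}(\textit{b}), a detail the paper handles in the surrounding prose rather than inside the proof.
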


\begin{proof}[Proof of Theorem \ref{thrm:sharp-phase-transition} for $d \geq 5$]
	For the case $d \geq 5$, it is sufficient to estimate $\mathbb E_\beta(|M_1|)$ by the term within (\ref{eq:EM1rep}) that corresponds the trivial tree $(S,n)=(S_0,n_0)$, i.e. $|V(S_0)|=1$ and $n_0(S_0)=0$. Together with Example \ref{ex:order0}, this yields
	\begin{align*}
	\mathbb E_{\beta=d^{-1/2}}(|M_1|) &\geq \mathrm{e}^{-d^{-1/2} d} (1+d^{-1/2})^{d-1} 
	d^{-1/2} d.
	\end{align*}
	For $d \geq 5$, the latter expression is strictly larger than $1$.
%	
%	************** Remark, do not remove! **************
%
%	Taking the formal logarithmic derivative w.r.t.\ $D=\sqrt{d}$ and estimating\linebreak{} $\ln(1+x)\geq \tfrac{x}{x+1}$, we see that the latter expression is increasing in $d$.
%	More precisely, we estimate
%	\begin{align}
%	& \, \frac{1}{D} + 2 D \underbrace{\ln\left(1+\tfrac{1}{D}\right)}_{\geq \frac{1/D}{1/D+1}} + (D^2-1)\frac{1}{1+\tfrac{1}{D}} \frac{-1}{D^2} - 1 \\
%	\geq & \, \frac{1}{\tfrac{1}{D}+1} \left( \frac{1}{D^2} + \frac{1}{D} + 2 - 1 + \frac{1}{D^2} -\frac{1}{D}-1 \right) >0.
%	\end{align}
%	Therefore, (\ref{eq:EM1greater1-1}) is monotone increasing in $d$.
%	This yields
%	\begin{align}
%	\mathbb E_{d^{-1/2}}[|M_1|] \geq \sqrt{5} (1+5^{-1/2})^4 \mathrm{e}^{-\sqrt{5}}  \approx 1.048>1
%	\end{align}
%	and
%	
%	****************************************************
%
	Thus,
	by Proposition \ref{prop:betac-characterisation}, this establishes the existence of a sharp phase transition and the partition into the two phases up to $\betaplus=
	d^{-1/2}$. 
%	Finally, \cite[Proposition 1.2 (2),(4)]{Ham-18} shows that $|\gamma_T|=\infty$ occurs with positive probability 
%	for all $\beta\geq 4 d^{-1}$ and $d\geq 16$. Since $4 d^{-1}\leq d^{-1/2}$ for these values of $d$, this implies that $\mathbb P(|\gamma_T|=\infty)>0$ for all $\beta>\beta_c$.
\end{proof}

To establish the existence of a sharp phase transition for $d=3,4$, too, we need to find sharper estimates on $\mathbb E_{\beta=d^{-1/2}}(|M_1|)$. Thus, we will need to calculate $p_{S,n}$ for more pairs $(S,n) \in \mathcal S_d$. We will do this 
in Section \ref{sec:combinatorics} and these considerations will also enable us to calculate the coefficients $\alpha_k$ within the asymptotic expansion of $\beta_c$.

\section{Asymptotic expansion} \label{sec:asymptotic-expansion} 
In this section, we will prove Theorem \ref{prop:betac-asymptotics}. Since $\beta_c$ is the solution of $\mathbb E_\beta(|M_1|)=1$ (see Proposition \ref{prop:betac-characterisation}), we are going to analyse the representation of $\mathbb E(|M_1|)$
from Lemma \ref{lem:EM1-rep}.
In particular, we are interested in sufficiently precise estimates of $\mathbb E(|M_1|)$ that will be given in Lemma \ref{lem:EM1-estimates}. Apart from providing the tools to establish the asymptotic expansion of $\beta_c$, this lemma will additionally allow us to formulate implicit conditions on $(\beta,d,u)$ such that $\gamma_T$ is finite
almost surely and infinite with positive probability, respectively.

To begin with, let us consider the conditional probabilities 
within the definition (\ref{eq:pSn-def}) of
$p_{S,n}(d,u)$ and note
that, for $y \in V(S^+) \setminus V(S)$ and given $A_{S,n}$ as well as $N^{e_y^-}=1$, the position $t_y$ of the link on $e_y^-$ is independent of $X_S$ and distributed uniformly on $\Tbeta$. Therefore, the conditional probability for $(\pred(y),t_y)$ to be contained in $\gamma_S$ is given by $\mathbb E \big( \tau_S^{\pred(y)}/\beta \,\big|\, A_{S,n} \big)$, where $\tau_S^x$
denotes the time that $\gamma_S$ spends at a vertex $x \in V(S)$, i.e.
\begin{align*}
\tau_S^x = \operatorname{vol}\{t \in \Tbeta : (x,t) \in \gamma_S\}.
\end{align*}
This yields that
\begin{align*}
p_{S,n}(d,u) =& \frac{1}{n!} \sum_{x \in V(S)} (d-d_S^x) \; \mathbb E \left( \frac{\tau_S^{x}}{\beta} \, \bigg| 
\, A_{S,n} \right), %\label{eq:pSn-exp}
\intertext{with}
d_S^x :=& |\{y \in V(S) : \pred(y)=x\}|
\end{align*}
being the out-degree of $x$ within $S$. We will make use of this representation of $p_{S,n}(d,u)$ in Section \ref{sec:combinatorics}. However, for now we will only rely on two observations: On the one hand, $p_{S,n}(d,u)$
is a polynomial in $d$ of degree $1$. On the other hand, $p_{S,n}$ does not change under tree-isomorphisms. This motivates %the following 
to introduce an 
equivalence relation on
$\bigcup_{d \in \mathbb N} \mathcal S_d$ by
\begin{align*}
\begin{split}
& (S,n) \sim (S',n') \\ 
& \Leftrightarrow \text{there is an isomorphism of rooted trees } J \colon S \to S' \text{ such that } %n'=n \circ J^{-1}.
\\& 
\hphantom{
\Leftrightarrow \text{there is an isomorphism of rooted trees } J \colon S \to S' \text{ such that }}
\mathllap{n'=n \circ J^{-1}.}
\end{split}
\end{align*}
To calculate $\mathbb E(|M_1|)$ it then suffices to sum over $\mathcal S := \bigcup_{d \in \mathbb N} \mathcal S_d \big / \sim$ instead of $\mathcal S_d$ if we account for multiplicities
\begin{align*}
\kappa_{S,n}(d) := |[(S,n)] \cap \mathcal S_d|,
\end{align*}
where $[(S,n)]$ denotes the equivalence class of $(S,n)$. 
Some examples of $[(S,n)]$ and the corresponding $\kappa_{S,n}(d)$ are given in
Table \ref{tab:exploration-scheme-results}.
In general, one easily sees that
\begin{align*}
\kappa_{S,n}(d) = \kappa_{S,n}^{(0)} \prod_{\substack{x \in V(S):\\d_S^x \geq 1}} d \cdot (d-1) \cdot \ldots \cdot (d-d_S^x+1)
\end{align*}
with some constant $0< \kappa_{S,n}^{(0)}\leq 1$ that accounts for (in-)distinguishability. In particular, $\kappa_{S,n}$ is a polynomial of degree $\sum_{x \in V} d_S^x = |E(S)|$ and whenever $d < %\max \deg S=
\max \{d_S^x : x \in V(S)\}$, we have $\kappa_{S,n}(d)=0$, consistent with the impossibility of embedding $S$ into the \dreg tree $T$. This allows us to write
\begin{align*}
\mathbb E(|M_1|) =& \sum_{[(S,n)] \in \mathcal S} \left( \mathrm{e}^{-\beta d} (1+\beta)^{d-1} \right)^{|V(S)|} \beta^{n(S)+1} \kappa_{S,n}(d) \, p_{S,n}(d,u). 
\end{align*}
Note that, by introducing $\mathcal S$ and $\kappa_{S,n}(d)$, the index set of summation $\mathcal S$ now does \emph{not} depend on $d$ anymore. This becomes important once we consider the asymptotic behavior of this expression as $d \to \infty$. Furthermore, it turns out to be convenient to introduce the variables $\alpha:=\beta d$ and $h=d^{-1}$, where we may allow arbitrary $h \in \mathbb R$, too. Now, we define the polynomials $q_{S,n}(h,u)$ such that
\begin{align*}
q_{S,n}(d^{-1},u) = d^{-|E(S)|-1} \kappa_{S,n}(d) \, p_{S,n}(d,u)
\end{align*}
for all $d \in \mathbb N$. For $h=d^{-1}$, this immediately gives

\begin{align} \label{eq:EM1-q}
\mathbb E_{\beta=\alpha h}(|M_1|)=& \sum_{[(S,n)] \in \mathcal S} \left( \mathrm{e}^{-\alpha} (1+\alpha h)^{\frac{1}{h}-1} \right)^{|V(S)|} \alpha^{n(S)+1} h^{\ord(S,n)} q_{S,n}(h,u), 
\end{align}

where the \emph{order} of $(S,n)$ is defined by
\begin{align*}
\ord(S,n):=& n(S)-|E(S)|=\sum_{e \in E(S)} (n(e)-1).
\end{align*}
As it turns out, we will need to consider all those terms of (\ref{eq:EM1-q}) with $\ord(S,n) \leq K$ to determine the coefficients $\alpha_0, \ldots,\alpha_K$ from the asymptotic expansion (\ref{eq:betacexpansion}) of $\beta_c$. Therefore, 
for $K \in \mathbb N_0$ and $u \in [0,1]$, we define
\begin{align*}
\FK(\alpha,h,u):= \sum_{\substack{[(S,n)]\in \mathcal S:\\\ord(S,n)\leq K}} \left( \mathrm{e}^{-\alpha}(1+\alpha h)^{\tfrac{1}{h}-1}\right)^{|V(S)|} \alpha^{n(S)+1} h^{\ord(S,n)} q_{S,n}(h,u).
\end{align*}
Note that as $n(S)\geq 2 |E(S)|$ and hence $\ord(S,n) \geq \tfrac{n(S)}{2} \geq |E(S)|$, there are a finite number of equivalence classes $[(S,n)]$ with fixed order $k \in \mathbb N_0$. Thus, $\FK(\cdot,\cdot,u)$ has an analytic continuation onto $\{(\alpha,h) \in \mathbb R^2 : |\alpha h|<1\}$ according to 
\begin{align*}
\mathrm{e}^{- \alpha} (1+\alpha h)^{(\frac 1 h -1)} 
=& \exp \left( \alpha \sum_{k=1}^\infty \tfrac{(-1)^{k}}{k+1} \alpha^k h^{k} - \ln(1+\alpha h) \right). %\label{eq:FK-analytical-expansion}
\end{align*}
This analyticity (in particular for $h=0$) will yield the analyticity of the solution $\alpha^{(K,+)}(h)$ to $F_K(\alpha,h,u)=1$ within the proof of Theorem \ref{prop:betac-asymptotics}.

Finally, we define $\bar{q}_{S,n}$ and $\FKb$ in the same way as $q_{S,n}$ and $\FK$ but with $p_{S,n}$ replaced by
\begin{align*} %\label{eq:pSnbar-def}
\bar p_{S,n} := \frac{1}{n!} \sum_{x \in V(S)} (d-d_S^x) \mathbb E \left( \frac{\beta-\tau_S^x}{\beta} \, \bigg| 
\, A_{S,n} \right).
\end{align*}
Here, $\bar p_{S,n}$ contains the time $\beta-\tau_S^x$ that $\gamma_S$ does \emph{not} spend at a vertex $x\in V(S)$ and in that sense, $\bar p_{S,n}$ is the counterpart of $p_{S,n}$.
Furthermore, note that $\FK$ and $\FKb$ are explicit once we know $\mathbb E\left(\frac{\tau_S^x}{\beta} \, \big| 
\, A_{S,n}\right)$ for all $[(S,n)] \in \mathcal S$ with $\ord(S,n) \leq K$. Within Section \ref{sec:combinatorics}, we will address how to calculate this expected value explicitly.
However, we are now able to state the estimates for
$\mathbb E(|M_1|)$. 
 
\medskip
\begin{lem}[{Estimates of $\mathbb E[|M_1|]$}] \label{lem:EM1-estimates}
	Let $K \in \mathbb N_0$ and $u \in [0,1]$ be arbitrary.
	\begin{enumerate}[label=\textnormal{(\textit{\alph*})}]
 		\item For all $d \in \mathbb N$ and $\beta>0$ we have
 		\begin{align*}
 		\mathbb  E %_{\beta,d,u}
 		(|M_1|) \geq \FK(\beta d, %\tfrac 1d
 		d^{-1}, u).
 		\end{align*}
 		\item For all $d \in \mathbb N$ and $\beta >0$ with $d(1-\mathrm{e}^{-\beta}(1+\beta)) < 1$ we have
 		\begin{align*}
 		\mathbb E %_{\beta,d,u}
 		(|M_1|) \leq \frac{\beta d \mathrm{e}^{-\beta}}{1-d(1-\mathrm{e}^{-\beta}(1+\beta))} - \FKb (\beta d, %\tfrac 1d
 		d^{-1}, u).
 		\end{align*}
 		\item For all $\hat \alpha>\mathrm{e}^{-2}$ and $d_0 \in \mathbb N$ with $d_0 > \hat \alpha^2 \mathrm{e}^2$ there is a constant $c_K>0$ such that for all $d \geq d_0$ and all $0<\alpha\leq \hat \alpha$ we have
 		\begin{align*}
 		\mathbb E_{\beta=\alpha/d}(|M_1|) \leq \FK (\alpha,d^{-1}, u) +
 		\frac{c_K}{d^{K+1}}.
% 		c\left(\frac{\hat \alpha^2 \mathrm{e}^2}{d}\right)^{K+1}.
		\end{align*}
		Moreover, $c_K \leq c \, (\hat \alpha^2 \mathrm{e}^2)^{K+1}$ for some constant $c$.
	\end{enumerate}
\end{lem}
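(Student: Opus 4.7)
The three parts build on one another, so I would treat them in order. Part (a) is immediate: every summand in the representation of $\mathbb E(|M_1|)$ via (\ref{eq:EM1-q}) is non-negative, because $\kappa_{S,n}(d) \geq 0$ and, from the representation $p_{S,n} = \frac{1}{n!}\sum_x(d-d_S^x)\mathbb E(\tau_S^x/\beta\,|\,A_{S,n})$ derived at the start of Section \ref{sec:asymptotic-expansion}, also $p_{S,n}(d,u) \geq 0$. Truncating the sum to $\mathrm{ord}(S,n) \leq K$ therefore drops non-negative terms and yields $\mathbb E(|M_1|) \geq \FK(\beta d, d^{-1}, u)$.

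For part (b), I would write $p_{S,n} = \tilde p_{S,n} - \bar p_{S,n}$, where the identity $\sum_{x \in V(S)}(d-d_S^x) = d|V(S)| - |E(S)| = (d-1)|V(S)|+1$ gives $\tilde p_{S,n} := p_{S,n} + \bar p_{S,n} = ((d-1)|V(S)|+1)/n!$. Substituting into Lemma \ref{lem:EM1-rep} and using the formula for $\mathbb P(A_{S,n})$ from its proof yields
\begin{align*}
\mathbb E(|M_1|) = \frac{\beta}{1+\beta}\,\mathbb E\!\left[((d-1)|V(C_r)|+1)\mathbf{1}_{|\bar C_r| < \infty}\right] - \sum_{(S,n) \in \mathcal S_d} f^{\bar p}_{S,n},
\end{align*}
where $f^{\bar p}_{S,n}$ denotes the summand of (\ref{eq:EM1rep}) with $\bar p_{S,n}$ replacing $p_{S,n}$. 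The cluster $\bar C_r$ arises from Galton--Watson percolation with offspring distribution $\mathrm{Bin}(d,q)$ for $q := 1-e^{-\beta}(1+\beta)$. Under the hypothesis $dq < 1$ this percolation is subcritical, so $|\bar C_r| < \infty$ almost surely and $\mathbb E|V(\bar C_r)| = 1/(1-dq)$, which reduces the first term to $\beta d e^{-\beta}/(1-dq)$. Using $f^{\bar p}_{S,n} \geq 0$, truncating the second sum to $\mathrm{ord}(S,n) \leq K$ replaces it by the smaller $\FKb$, proving (b).

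Part (c) then follows from (b) via a tail estimate. Subtracting $\FK$ and writing $\tilde F_K := \FK + \FKb$ yields $\mathbb E_{\beta=\alpha/d}(|M_1|) - \FK \leq X^* - \tilde F_K$, where $X^* := \alpha e^{-\alpha/d}/(1-d(1-e^{-\alpha/d}(1+\alpha/d)))$. The hypothesis $d \geq d_0 > \hat\alpha^2 e^2$ together with $1-e^{-\beta}(1+\beta) \leq \beta^2/2$ gives $dq < 1$, so by (b) one has $X^* = \tilde F$, the analogue of (\ref{eq:EM1rep}) with $\tilde p_{S,n}$. It therefore suffices to bound
\begin{align*}
\tilde F - \tilde F_K = \sum_{(S,n) \in \mathcal S_d,\,\mathrm{ord}(S,n) > K} f^{\tilde p}_{S,n}\Big|_{\beta=\alpha/d}
\end{align*}
by $c_K/d^{K+1}$. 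Splitting by $k := |E(S)|$ and substituting $n(e) = 2 + m(e)$, the inner sum over $m \in \mathbb N_0^{E(S)}$ is controlled via the generating function $G_\beta(z) := \sum_{m \geq 0} \beta^{2+m}z^m/(2+m)! = (e^{\beta z}-1-\beta z)/z^2$: for $k \leq K$ the constraint $\mathrm{ord}(S,n) > K$ becomes $|m| \geq K-k+1$, and a Chernoff-type bound at $z = 1/\beta$ gives $\sum_{m:|m|\geq K-k+1}\prod_e \beta^{2+m(e)}/(2+m(e))! \leq \beta^{K-k+1}G_\beta(1/\beta)^k = (e-2)^k\beta^{K+k+1}$, while for $k > K$ the constraint is automatic and the inner sum is bounded by $(e^\beta - 1 - \beta)^k \leq (e\beta^2/2)^k$. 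Combining these with $(e^{-\beta d}(1+\beta)^{d-1})^{|V(S)|}\leq 1$, $(d-1)|V(S)|+1 \leq d(k+1)$, and the count $N_k(d) \leq (ed)^k$ of rooted subtrees of $T$ with $k$ edges (via $\binom{dk}{k}\leq(ed)^k$) produces two geometric series in $\alpha^2 e^2/(2d) \leq 1/2$ (by hypothesis), both of size $O(d^{-(K+1)})$; tracking constants gives $c_K \leq c(\hat\alpha^2 e^2)^{K+1}$.

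\textbf{Main obstacle.} The technical work is concentrated in part (c), where I must simultaneously control two sources of smallness---the Chernoff tail for the Poisson-distributed number of links per edge (relevant when few edges carry many links) and the subcritical Galton--Watson tail for cluster size (relevant when many edges carry only two links)---while summing over rooted subtrees of $T$, uniformly in $\alpha \leq \hat\alpha$ and $d \geq d_0$, and tracking constants sharply enough to verify the claimed form of $c_K$.
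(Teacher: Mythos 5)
Your proposal is correct and follows essentially the same route as the paper: (a) by positivity and truncation of the series representation, (b) via the identity $p_{S,n}+\bar p_{S,n}=\tfrac{1}{n!}(d|V(S)|-|E(S)|)$, whose full sum is the expected number of boundary vertices carrying exactly one link and evaluates to $\beta d\mathrm e^{-\beta}/(1-d(1-\mathrm e^{-\beta}(1+\beta)))$ in the subcritical regime, and (c) by bounding the order-$>K$ tail with the same replacement, the exponential factor by $1$, and a Fuss--Catalan count of subtrees. The only deviations are cosmetic bookkeeping: in (c) you control the inner sum over link multiplicities by a Chernoff/generating-function bound at $z=1/\beta$ with a case split in $|E(S)|$, whereas the paper relaxes the constraint $n(e)\geq 2$ and uses the multinomial theorem plus Stirling (and your cited subtree bound $\binom{dk}{k}\leq(ed)^k$ is slightly mis-indexed against the Fuss--Catalan number, costing only a constant factor); both yield the same geometric series and the same form of $c_K$.
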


\begin{figure}[h]
	\resizebox{\textwidth}{!}{
	\begin{tabular}{p{0.5\textwidth} p{0.5\textwidth}}
 	\includegraphics[height=165px]{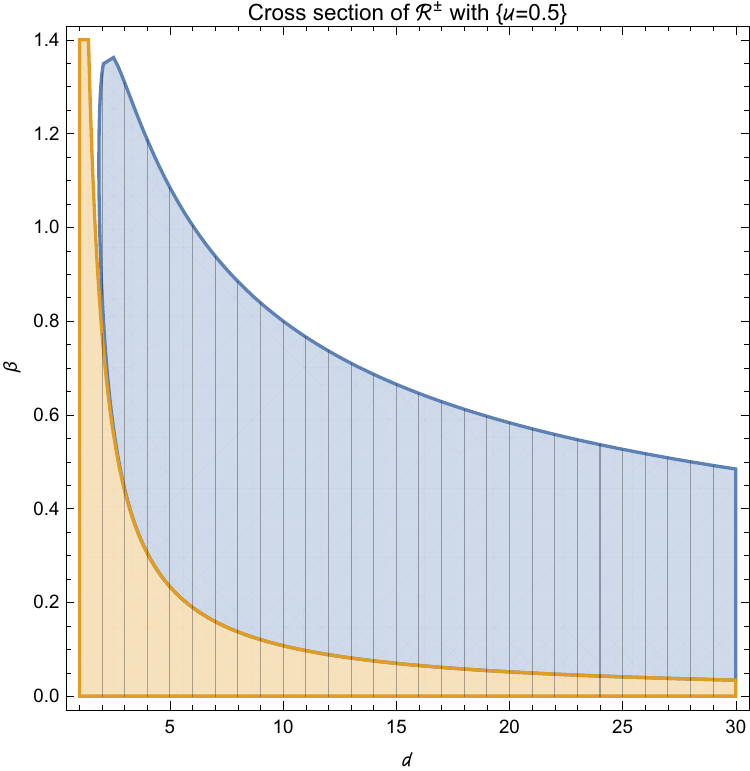} &
 	\includegraphics[height=166px]{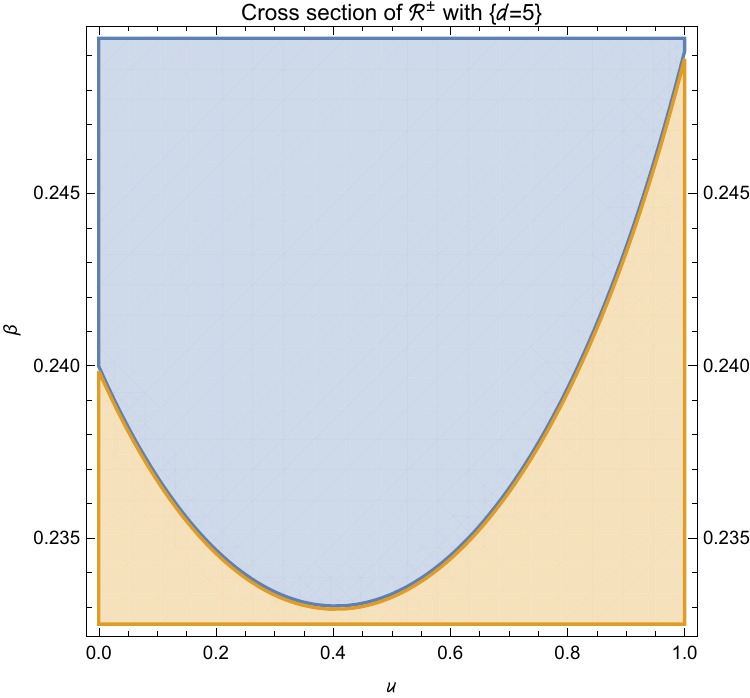}\\
 	\includegraphics[height=165px]{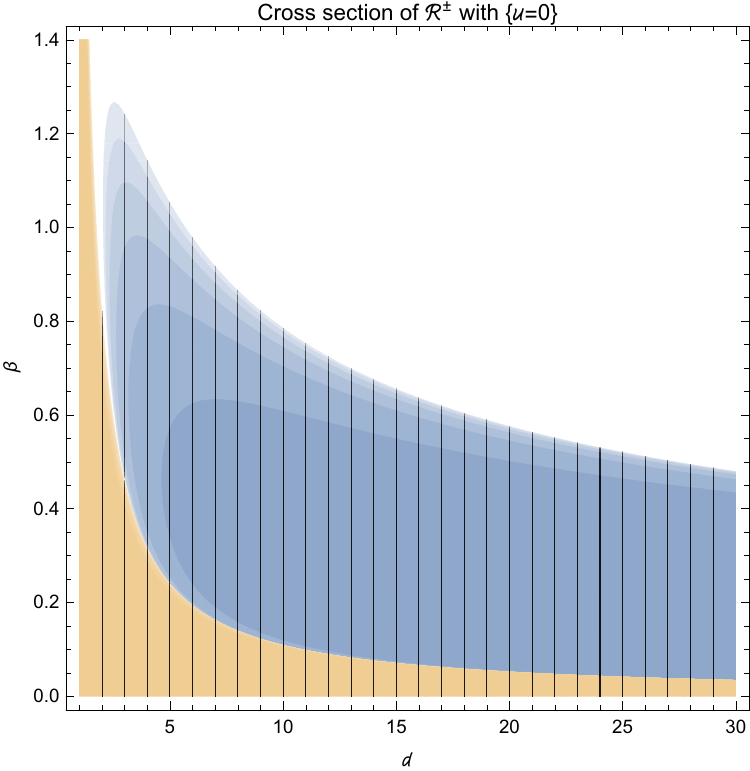} & \hspace{3px}
 	\includegraphics[height=165px]{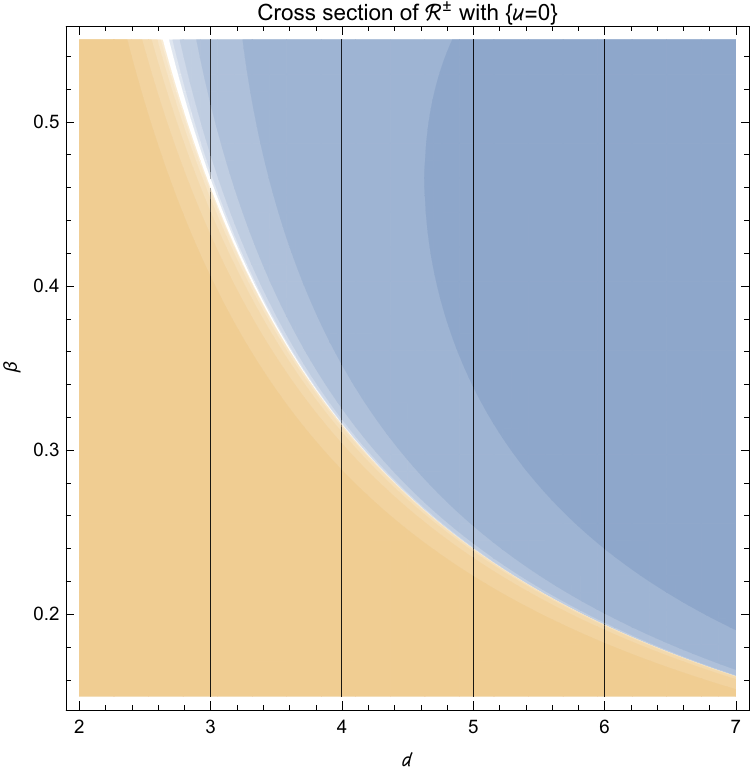}
 	\end{tabular}
 }
 	\caption{Regions $\mathcal R_K^\pm$ of parameters $(\beta,d,u)$ where we can guarantee that $\gamma_T$ is infinite with positive probability (blue region $\mathcal R^+_K$) and that $\gamma_T$ is finite almost surely (sandybrown region $\mathcal R^-_K$), respectively. On top, we considered $K=5$ while the bottom pictures show a comparison for $K=0,\ldots,5$ with regions of higher $K$ being more lightly coloured.}
 	\label{fig:phase-subregions-2}
\end{figure}

Before addressing the proof, let us look at an immediate consequence. If we combine the estimates of Lemma \ref{lem:EM1-estimates}(\textit{a}) and (\textit{b}) with Proposition \ref{prop:exploration-scheme-conditions}
and Lemma \ref{lem:Mn-GW}, we see
that with positive probability there are infinite loops for all parameters within the region
\begin{align}
%(\beta,d,u) \in \mathcal R^+_K&:=\{(\tilde \beta, \tilde d, \tilde u) \in (0,\infty)\times \mathbb N \times [0,1] : \FK ( \tilde \beta \tilde d,  \tfrac{1}{\tilde d}, \tilde u)>1\}, \label{eq:Rplusdef}
\mathcal R^+_K&:=\{( \beta, d, u) \in (0,\infty)\times \mathbb N \times [0,1] : \FK ( \beta d,  d^{-1}, u)>1\}, \label{eq:Rplusdef}
%\end{align}
\intertext{%
	while $\gamma_T$ is finite %$\mathbb P_{\beta,d,u}$-
	almost surely for 
}
%\begin{align}
\begin{split}
%(\beta,d,u) \in \mathcal R^-_K &:=\Big\{ (\tilde \beta, \tilde d, \tilde u) \in (0,\infty)\times \mathbb N \times [0,1] : \tilde d(1-\mathrm{e}^{- \tilde \beta}(1+ \tilde \beta)) \leq 1 \\
%& \qquad \text{ and } \frac{{\tilde \beta} \tilde d \mathrm{e}^{-{\tilde \beta}}}{1-{\tilde d}(1-\mathrm{e}^{-{\tilde \beta}}(1+{\tilde \beta}))} - \FKb ({\tilde \beta} {\tilde d}, \tfrac{1}{\tilde d}, {\tilde u}) \leq 1 \Big\}.
\mathcal R^-_K &:=\Big\{ (\beta, d, u) \in (0,\infty)\times \mathbb N \times [0,1] : d(1-\mathrm{e}^{- \beta}(1+ \beta)) < 1 \\
& \qquad \qquad \text{ and } \frac{{ \beta} d \mathrm{e}^{-{ \beta}}}{1-{ d}(1-\mathrm{e}^{-{\beta}}(1+{\beta}))} - \FKb ({\beta} {d}, d^{-1}, {u}) \leq 1 \Big\}.
\end{split}\label{eq:Rminusdef}
\end{align}

Various cross sections of $\mathcal R^\pm_K$ are shown in Figure \ref{fig:phase-subregions} and Figure \ref{fig:phase-subregions-2}, with the latter figure also containing a comparison of the precision of $\mathcal R^\pm_K$ for $K=0,\ldots,5$. 

\begin{proof}[Proof of Lemma \ref{lem:EM1-estimates}]
	The estimate within (\textit{a}) follows directly from (\ref{eq:EM1-q}) and the definition of $\FK$. Moreover, as
	\begin{align}\label{eq:p+pbar}
	p_{S,n}(d,u) \leq p_{S,n}(d,u) + {\bar p}_{S,n}(d,u) = \frac{1}{n!} (d|V(S)|-|E(S)|)
	\end{align}
	we find
	\begin{align*}
	\begin{split}
	\mathbb E(|M_1|) \leq& - \FKb (\beta d, d^{-1},u) \\& +\sum_{(S,n) \in \mathcal S_d} \left( \mathrm{e}^{-\beta d} (1+ \beta)^{d-1} \right)^{|V(S)|} \beta^{n(S)+1} \frac{d|V(S)|-|E(S)|}{n!},
	\end{split}
	\\
	=& - \FKb (\beta d, d^{-1},u) +\sum_{(S,n) \in \mathcal S_d} \mathbb P(A_{S,n}) \;\;\sum_{\mathclap{y \in V(S^+)\setminus V(S)}} \; \; \mathbb P\left(N^{e_y^-}=1 \, \big| 
	\, A_{S,n}\right),
	\end{align*}
	where the last equality follows from the proof of Lemma \ref{lem:EM1-rep}.
%	where 
	Now,
	the sum on the right hand side is easily seen to be 
	the expectation of the random variable $|W_1|$ with
	\begin{align*}
	W_1:= \{x \in V(C_r^+) \setminus V(C_r) : N^{e_x^-}=1 \}.
	\end{align*}
	Fortunately, for $d(1-\mathrm{e}^{-\beta}(1+\beta))<1$, this expectation can also be calculated in a more straightforward way.
	By applying Wald's identity multiple times, we find
	\begin{align*}
		\mathbb E(|\{y \in V(C_r) : |y|=n\}|)&= \big(d(1-\mathrm{e}^{-\beta}(1+\beta))\big)^n
		\intertext{and thus}
		\mathbb E(|W_1|) &= \sum_{n=1}^\infty \mathbb E(|\{x \in W_1 : |x|=n\}|)
		\\&= \sum_{n=1}^\infty d \beta \mathrm{e}^{-\beta} \mathbb E(|\{y \in V(C_r) : |y|=n-1\}|)
		\\&= \frac{\beta d \mathrm{e}^{-\beta}}{1-d(1-\mathrm{e}^{-\beta}(1+\beta))}.
	\end{align*}
%	by applying Wald's identity:, yielding 
%	\begin{align*}\mathbb E(|W_1|) = \frac{\beta d \mathrm{e}^{-\beta}}{1-d(1-\mathrm{e}^{-\beta}(1+\beta))}.
%	\end{align*}
	%
	% ******** Drawn out calculation, do not remove ********
	%
	%More precisely, let $V_n:=\{x \in V : \dist(x,r)=n\}$ for $n \in
	%\mathbb N_0$ and set $\mu_{=1}:= d \beta \mathrm{e}^{-\beta}$ and
	%$\mu_{\geq 2}:=d(1-\mathrm{e}^{-\beta}(1+\beta))<1$ to be the expected
	%number of children $y$ of the a vertex $x$ such that the edge $\{x,y\}$
	%carries one and at least two links, respectively. Then
	%\begin{align}
	%\mathbb E[|V_n \cap V(C)|]&= \mathbb E[|V_{n-1} \cap V(C)|] \,
	%\mu_{\geq 2}\\
	%&= \ldots = ( \mu_{\geq 2} )^n \qquad \text{for }n \in \mathbb N_0,\\
	%\mathbb E[|V_n \cap W_1|] &= \mathbb E[|V_{n-1} \cap V(C)|] \,
	%\mu_{=1} \\
	%&= ( \mu_{\geq 2})^{n-1} \mu_{=1} \qquad \text{for }n \in \mathbb N.
	%\end{align}
	%Therefore,
	%\begin{align}
	%\mathbb E[|W_1|\mathbf{1}_{\Ofin}]=\sum_{n=1}^\infty \mathbb E[|V_n
	%\cap W_1|] = \mu_{=1} \sum_{n=1}^\infty (\mu_{\geq 2})^{n-1} =
	%\frac{\mu_{=1}}{1-\mu_{\geq 2}} .
	%\end{align}
	%
	% ****************
	%
	For (\textit{c}),
	let
	$0<\alpha\leq \hat \alpha$ and $d_0 \leq d \in \mathbb N$ be given. %, we 
	We
	now use that
	\begin{align*}
	\begin{split}
	\mathbb E_{\beta=\alpha/d}(|M_1|) =& F_K(\alpha,d^{-1},u) \\&+ \sum_{\substack{(S,n) \in \mathcal S_d:\\\ord(S,n)>K}} \left( \mathrm{e}^{-\alpha} \left(1+ \frac{\alpha}{d}\right)^{d-1} \right)^{|V(S)|} \left(\frac{\alpha}{d}\right)^{n(S)+1} p_{S,d}(d,u)
	\end{split}
	\end{align*}
	and estimate the sum on the right hand side. 
	By (\ref{eq:p+pbar}) and the facts that 
	$|E(S)|\leq \ord(S,n)$ and $|E(S)|\geq 1$ for $\ord(S,n) \geq 1$, we find
	\begin{align}\label{eq:asymp-estimate-1}
	\begin{split}
	&\mathbb E_{\beta=\alpha/d}(|M_1|) - F_K(\alpha,d^{-1},u) \\
	& \leq \sum_{k=K+1}^\infty \sum_{\ell=1}^k \sum_{\substack{S \subseteq T \text{ subtree}\\\text{with root }r\\\text{and }|E(S)|=\ell}} \sum_{\substack{n \in (\mathbb N_{\geq 2})^{E(S)}:\\ n(S)=k+\ell}} {\underbrace{\left( \mathrm{e}^{-\alpha} \left(1+ \frac{\alpha}{d}\right)^{d-1} \right)}_{\leq 1}}^{|V(S)|}\\
	& \hphantom{
		\leq \sum_{k=K+1}^\infty \sum_{\ell=1}^k \sum_{\substack{S \subseteq T \text{ subtree}\\\text{with root }r\\\text{and }|E(S)|=\ell}} \sum_{\substack{n \in (\mathbb N_{\geq 2})^{E(S)}:\\ n(S)=k+\ell}} {\underbrace{\left( \mathrm{e}^{-\alpha} \left(1+ \frac{\alpha}{d}\right)^{d-1} \right)}_{\leq 1}}^{|V(S)|}} 
	\mathllap{\left(\frac{\alpha}{d}\right)^{n(S)+1} \frac{d|V(S)|-|E(S)|}{n!}}.
	\end{split}
	\end{align}
	Note that, within the last expression, we may write $n(S)$, $|V(S)|$ and $|E(S)|$ in terms of $k$ and $\ell$ instead of $S$. Moreover, by \cite[Exercise 2.3.4.4-11 on p.397 and p.589]{Knuth}, the number of subtrees $S \subseteq T$ of the \dreg tree $T$ with $r \in V(S)$ and $|V(S)|=\ell+1$ is given by the $(\ell+1)^\text{th}$ $d$-Fuss-Catalan number $\frac{1}{d(\ell+1)-\ell} {{d(\ell+1)}\choose {\ell+1}}$.
	Thus, by expanding the last summation within (\ref{eq:asymp-estimate-1}) onto all $n \in (\mathbb N_0)^{E(S)}$ with $n(S)=k+\ell$, using the multinomial theorem and estimating ${{d(\ell+1)}\choose {\ell+1}}$ due to ${m \choose j} \leq m^j/j!$, we obtain
	\begin{align*}
	\begin{split}
	&\mathbb E_{\beta=\alpha/d}(|M_1|) - F_K(\alpha,d^{-1},u) \\
	&\leq  \sum_{k=K+1}^\infty \sum_{\ell=1}^k \frac{(\ell+1)^{\ell+1}}{(\ell+1)!} \frac{\alpha^{k+\ell+1}}{d^{k}} \frac{\ell^{k+\ell}}{(k+\ell)!}
	\end{split}\\
	&\leq  \, \frac{1}{d^{K+1}} \underbrace{{\hat \alpha}^{K+3}
		\sum_{k=0}^{\infty} \left(\frac{\hat \alpha}{d_0}\right)^k
		\sum_{\ell=0}^{k+K} {\hat \alpha}^{\ell}
		\frac{(\ell+2)^{\ell+2}}{(\ell+2)!} \frac{(\ell+1)^{\ell+1}}{(\ell+1)!}
		\prod_{j=1}^{k+K+1} \frac{\ell+1}{\ell+1+j} }_{=:c_K}.
	\end{align*}
	Now, by Stirling's approximation $ \frac{\ell^\ell}{\ell!}\leq
	\frac{\mathrm{e}^\ell}{\sqrt{2 \pi \ell}}\leq \mathrm{e}^\ell$ we find
	\begin{align*}
	c_K &
	\leq \hat \alpha^{K+3} \sum_{k=0}^\infty \left(\frac{\hat
		\alpha}{d_0}\right)^k \sum_{\ell=0}^{k+K} \hat \alpha^\ell
	\mathrm{e}^{2 \ell + 3} \prod_{j=1}^{k+K+1} \underbrace{\frac{\ell+1}{\ell+1+j}}_{\leq 1}\\
	&\leq \hat \alpha^{K+3} \mathrm{e}^3  \sum_{k=0}^\infty
	\left(\frac{\hat \alpha}{d_0}\right)^k \frac{(\hat \alpha
		\mathrm{e}^2)^{k+K+1}-1}{\hat \alpha \mathrm{e}^2-1}\\
	&\leq \left(\hat \alpha^2 \mathrm{e}^2\right)^{K+1} \underbrace{\frac{\hat \alpha^{2}\mathrm{e}^{3}}{\hat \alpha
		\mathrm{e}^2-1} \sum_{k=0}^\infty \left( \frac{\hat \alpha^2
		\mathrm{e}^2}{d_0}\right)^k}_{=:c<\infty}
	\end{align*}
	since we assumed that $\hat \alpha \mathrm{e}^2>1 
	$ and $d_0 > \hat \alpha^2 \mathrm{e}^2$.
\end{proof}

\begin{rem}
	The proof of Lemma \ref{lem:EM1-estimates}(\textit{a}) and (\textit{b}) shows that the given estimates correspond to estimating $\mathbb E(|M_1^-|) \leq \mathbb E(|M_1|) \leq \mathbb E(|M_1^+|)$, where
	$M_1^\pm$ are worst-case bounds on $M_1$ outside of $A_K:=\bigcup_{(S,n) \in \mathcal S_d : \ord(S,n) \leq K} A_{S,n}$. More precisely,
	we may define $M_1^\pm$ to coincide with $M_1$ on $A_K$ (i.e., on the set where we trace the propagation of $\gamma_T$ precisely%``know'' the behaviour of loops
	), while we set $M_1^-:=\emptyset$ and $M_1^+ := W_1$ otherwise. 
	This idea of tracing $\gamma_T$ whenever possible/viable and using worst-case estimates otherwise might be a practicable way to proceed in another context, too, even if there is no ``perfect'' sequence $(M_n)_n$: If one is able to construct
	worst-case bounds $(M_n^\pm)$ for the propagation of $\gamma_T$ by a construction similar to the one for $M_1$, this at least yields
	the sufficient conditions for both phases that correspond to the estimates from Lemma \ref{lem:EM1-estimates}(\textit{a}) and (\textit{b}).
\end{rem}

Apart from providing implicit but sharp phase-conditions for the parameters $(\beta,d,u)$, the estimates from Lemma \ref{lem:EM1-estimates} also allow us to find the asymptotic expansion of $\beta_c$.
 
\begin{proof}[Proof of Theorem \ref{prop:betac-asymptotics}]
 	Fix $u \in [0,1]$. 
 	Since the terms within $\FK(\alpha,h,u)$ contain the factor $h^{\ord(S,n)}$ and the only pair $(S,n)$ with $\ord(S,n)=0$ is $(S_0,n_0)$, with Example \ref{ex:order0} and $\kappa_{S_0,n_0}(d)=1$ we find
 	\begin{align*}
 	\FK(1,0,u)=1
 	\end{align*}
 	as well as 
 	\begin{align*}
 	\partial_\alpha \FK(\alpha,h,u) \big |_{\alpha=1,h=0} &= 
 	1
 	\end{align*}
 	for all $K \in \mathbb N_0$.
 	Therefore, by the implicit function theorem for analytic functions
 	(see Proposition \ref{prop:analytic-solution}) there exist analytic functions $\alpha^{(K,\pm)}$ on a common neighbourhood of $h=0$ and such that
 	\begin{align*}
 	\FK(\alpha^{(K,+)}(h),h,u)&=1
 	\intertext{and}
 	\FK(\alpha^{(K,-)}(h),h,u)+c_K h^{K+1} &= 1
 	\end{align*}
 	for sufficiently small $|h|$, where $c_K$ is chosen according to Lemma \ref{lem:EM1-estimates}(c) and $\hat \alpha := 2$. Moreover, by a corollary of the multivariate Fa\`a Di Bruno formula (see Proposition \ref{prop:analytic-solution}) the coefficients of $\alpha^{(K,\pm)}$ can be determined recursively by $\alpha_0=1$ and
 	\begin{align}
 	\begin{split}\label{eq:alphakrecursion}
 	\alpha_k=&\,\alpha_k(u)\\:=&\, - \sum_{\substack{j_0,\ldots,j_{k-1} \in \mathbb N_0:\\
 			1\leq \sum_{i=0}^{k-1} j_i \leq k,\\ j_0 + \sum_{i=1}^{k-1} i j_i = k}} \frac{\partial_h^{j_0} \partial_\alpha^{j_1+\ldots+j_{k-1}} F_k(\alpha,h,u) \big |_{\alpha=1,h=0}}{\prod_{i=0}^{k-1} j_i!} \prod_{i=1}^{k-1} \alpha_i^{j_i}, 
 	\end{split}
 	\end{align}
 	$k=1,\ldots,K$.	Here, we used that
 	\begin{align*}
 	\partial_h^j F_k (\alpha,h,u) \big |_{h=0} &= \partial_h^j \FK(\alpha,h,u) \big |_{h=0} \\&= \partial_h^j \left(\FK(\alpha,h,u) + c_K h^{K+1} \right)|_{h=0}
 	\end{align*}
 	for $j \leq k \leq K$ since these functions differ by terms containing the factor $h^{j+1}$. 
 	In particular, for $0 \leq k \leq K$, the $k^\text{th}$ coefficients of $\alpha^{(K,+)}$ and $\alpha^{(K,-)}$ coincide with $\alpha_k$ and they do not depend on the choice of $K$. This yields
 	\begin{align} \label{eq:alphakpmextension}
 	\alpha^{(K,\pm)}(h)= \sum_{k=0}^{K} \alpha_k h^k + \mathcal O(h^{K+1})
 	\end{align}
 	as $h \to 0$ with the $\mathcal O$-term of course differing for $\alpha^{(K,+)}$ and $\alpha^{(K,-)}$.
 	Furthermore, by an easy induction argument the recursion (\ref{eq:alphakrecursion}) yields that every $\alpha_k(u)$ is a polynomial in $u$ as $F_k$ is a polynomial in $u$.
 	Finally, by Lemma \ref{lem:EM1-estimates}(a), for $\beta^+=d^{-1} \, \alpha^{(K,+)}(d^{-1})$ we find that
 	\begin{align*}
 	\mathbb E_{\beta^+ 
 		}(|M_1|) \geq \FK(\alpha^{(K,+)}(d^{-1}), d^{-1}, u) =1 = \mathbb E_{\beta_c}(|M_1|)
 	\end{align*}
 	for all sufficiently large $d$. Thus, by monotonicity (see Proposition \ref{prop:betac-characterisation}) we find that 
 	\begin{align*}
 	\beta_c \leq \beta^+= d^{-1} 
 	\, \alpha^{(K,+)} 
 	(d^{-1})
 	\end{align*}
 	for those $d$. Similarly, from Lemma \ref{lem:EM1-estimates}(c), we obtain 
 	\begin{align*}
 	\beta_c \geq 
 	d^{-1} \, \alpha^{(K,-)}(d^{-1})
 	\end{align*}
 	for large $d$. Combined with (\ref{eq:alphakpmextension}), this completes the proof.	
\end{proof}

\section{Reduction to a combinatorial problem} \label{sec:combinatorics}

In this section, we are going to present a method to calculate the polynomials $p_{S,n}$ and $\bar p_{S,n}$, respectively, for every fixed $[(S,n)]\in \mathcal S$ with $E(S) \neq \emptyset$. For this purpose, 
it suffices to calculate $\mathbb E\left(\frac{\tau_S^x}{\beta} \big | A_{S,n} \right)$ for all $x \in V(S)$ (compare with the discussion in the beginning of Section \ref{sec:asymptotic-expansion}) and we will determine this quantity by 
partitioning $A_{S,n}$ into the events $A_{S,n,\nu}$ where the cluster $C_r$ is fixed to coincide with $S$ and the total number $N^e$ of links on every edge $e \in E(S)$ is given by $n(e)$, i.e.,
\begin{align*}
\begin{split}
A_{S,n,\nu} :=& A_{S,n} \cap \{\text{For all }j=1,\ldots,n(S)\text{ the }  j^\text{th}\text{ link}\text{ on }S\\& \hphantom{A_{S,n} \cap \{} \text{is of type }\star_j\text{ and occurs on the edge }e_j\} , 
\end{split}
\intertext{where}
\nu =& ((e_1,\star_1),\ldots,(e_{n(S)},\star_{n(S)})) \in \mathcal V_{S,n},\\
\mathcal V_{S,n} :=& \big \{((\epsilon_j,*_j))_{j=1}^{n(S)} : |\{j : \epsilon_j = e\}|=n(e) \text{ for all }e \in E(S)\big \}.
\end{align*}
Moreover, the time-ordering of %the links on $E(S)$ % 
the edges and types of the links
is specified by the sequence $\nu$. Here, time-ordering is understood via $\Tbeta \simeq [0,\beta)$ and in particular, the $j^\text{th}$ link is determined with respect to this order. Given $A_{S,n,\nu}$, 
determining the loop configuration is then closely related to the following task.

\medskip
\begin{cprob} \label{cprob:1}
	Fix $[(S,n)] \in \mathcal S$ with $E(S) \neq \emptyset$ and \linebreak{} $\nu=((e_j,\star_j))_{j=1}^{n(S)} \in \mathcal V_{S,n}$.
	Now, for $j=1,\ldots,n(S)$, place a link of type $\star_j$ onto the edge $e_j$ at position $\tfrac{j}{n(S)+1}\beta$, i.e. consider the deterministic link configuration $X_\nu=(X_\nu^{e,\star})_{e \in E(S),\star \in \{\cross,\dbars\}}$ with %links on $E(S) \times \Tbeta$ according to
	\begin{align}\label{eq:X-nu-def}
	X_\nu^{e,\star} = \; \; \sum_{\mathclap{\substack{j=1,\ldots,n(S):\\e_j=e \text{ and }\star_j=\star}}} \; \;
	\delta_{\frac{j}{n(S)+1}\beta}.
	\end{align}
	For this configuration, consider the loop $\gamma_S(X_\nu)$ on the tree $S$ that contains $(r,0)$ and compute the combinatorial quantities
	\begin{align}\label{eq:def_bxnu}
	b_{S,\nu}^x := \left| \left\{ j \in \{0,\ldots,n(S)\} : \{x\} \times \left( \tfrac{j}{n(S)+1}\beta,\tfrac{j+1}{n(S)+1}\beta \right) \subseteq \gamma_S(X_\nu) 
	\right\} \right|
	\end{align}
	for all $x \in V(S)$.
\end{cprob}

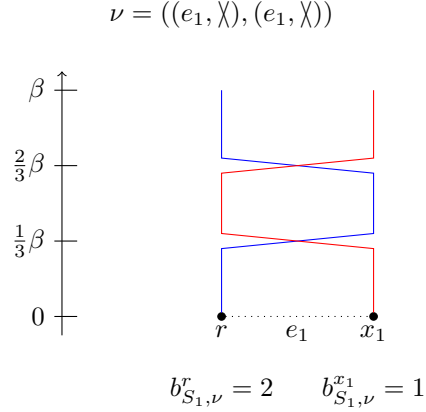
\begin{figure}
	\centering
	\begin{tikzpicture} %[scale=0.69]
	%Vertices at t=0
	\node[circle,scale=0.3,fill=black,draw] at (0,0) (x1) {};
	\node[circle,scale=0.3,fill=black,draw] at (2,0) (x2) {};
	%	\node[circle,scale=0.3,fill=black,draw] at (5,0) (x3) {};
	%	\node[circle,scale=0.3,fill=black,draw] at (7,0) (x4) {};
	%	\node[circle,scale=0.3,fill=black,draw] at (10,0) (x5) {};
	%	\node[circle,scale=0.3,fill=black,draw] at (12,0) (x6) {};				
	%Link positions
	%	\node[inner sep=0pt] at (0,1.9) (l1) {};
	%	\node[inner sep=0pt] at (2,1.2) (l2) {};
	%	\node[inner sep=0pt] at (4,3.1) (l3) {};
	%	\node[inner sep=0pt] at (4,0.6) (l4) {};
	%	\node[inner sep=0pt] at (2,2.3) (l5) {};
	%Edges connecting these vertices
	\draw[dotted] (x1) node[below]{$r$} -- node[below]{$e_1$} (x2) node[below]{$x_1$};
	%	\draw[dotted] (x3) node[below]{$r$} -- node[below]{$e_1$} (x4) node[below]{$x_1$};
	%	\draw[dotted] (x5) node[below]{$r$} -- node[below]{$e_1$} (x6) node[below]{$x_1$};				
	%		\draw[dotted] (x3) -- (x4);
	%		\draw[dotted] (x5) -- (x6);
	%Coordinate axes
	%		\draw[->]  (-.5,-.5) -- node[below] {$G=(V,E)$} +(1.5,0) ;
	%
	\node at (-2.1,4) {$\;$};
	%	\node at (-2.1,-1) {$b_\nu^x=\;$};
	\node (o) at (-2.1,0) {};
	\draw[->] (o)+(0,-.25) -- +(0,3.25);
	\draw (o)+(-.1,0) node[left] {$0$} -- +(.2,0);
	\draw (o)+(-.1,1) node[left] {$\tfrac{1}{3}\beta$} -- +(.2,1);
	\draw (o)+(-.1,2) node[left] {$\tfrac{2}{3}\beta$} -- +(.2,2);
	\draw (o)+(-.1,3) node[left] {$\beta$} -- +(.2,3);
	%	\draw (o)+(-.1,4) node[left] {$$} -- +(.2,4);
	%Draw loops
	\draw[color=myblue] (x1) -- +(0,0.9) -- +(2,1.1) -- +(2,1.9) -- +(0,2.1) -- +(0,3);
	%	\draw[color=myblue] (x1)+(0,4) -- +(0,3.1) -- +(2,3.1) -- +(2,4);
	\draw[color=myred] (x2) -- +(0,0.9) -- +(-2,1.1) -- +(-2,1.9) -- +(0,2.1) -- +(0,3);
	%
	%	\draw[color=myblue] (x3) -- +(0,0.9) -- +(2,0.9) -- +(2,0);
	%	\draw[color=myblue] (x3)+(0,4) -- +(0,3.1) -- +(2,3.1) -- +(2,4);
	%	\draw[color=myred] (x3)+(0,1.1) -- +(0,1.9) -- +(2,2.1) -- +(2,2.9) -- +(0,2.9) -- +(0,2.1) -- +(2,1.9) -- +(2,1.1) -- +(0,1.1);
	%	%
	%	\draw[color=myblue] (x5) -- +(0,0.9) -- +(2,0.9) -- +(2,0);
	%	\draw[color=myblue] (x5)+(0,4) -- +(0,3.1) -- +(2,2.9) -- +(2,2.1) -- +(0,2.1) -- +(0,2.9) -- +(2,3.1) -- +(2,4);
	%	\draw[color=myred] (x5)+(0,1.1) -- +(0,1.9) -- +(2,1.9) -- +(2,1.1) -- +(0,1.1);
	%Label vertices with number of intervals
	\node at (0,-1) {$b_{S_1,\nu}^r=2$};
	\node at (2,-1) {$b_{S_1,\nu}^{x_1}=1$};
	%	\node at (5,-1) {$2$};
	%	\node at (7,-1) {$2$};
	%	\node at (10,-1) {$3$};
	%	\node at (12,-1) {$3$};
	%Label sequences
	\node at (0,4) {$\nu=((e_1,\cross),(e_1,\cross))$};
	%	\node at (6,5) {\small $((e_1,\dbars),(e_1,\cross),(e_1,\dbars))$};
	%	\node at (11,5) {\small $((e_1,\dbars),(e_1,\dbars),(e_1,\cross))$};
	\end{tikzpicture}
	\caption{Let $S_1$ be the tree containing two vertices and one edge $e_1=\{r,x_1\}$ between them. Furthermore, let $n_1(S_1)=n_1(e_1):=2$ be the number of links on this edge and define their types by $\nu:=((e_1,\cross),(e_1,\cross))$. Then the link configuration $X_\nu$ on $E(S)$ and its corresponding loop configuration are depicted above. For every vertex $x \in V(S_1)$, one can now easily read off the number $b^x_{S_1,\nu}$ of intervals $\left(\tfrac{j}{n(S)+1}\beta,\tfrac{j+1}{n(S)+1}\beta \right)$ that the ({\color{myblue}{blue}}) loop $\gamma_{S_1}(X_\nu)$ stays at this vertex.}
	\label{fig:solve-combinatorial-problem}
\end{figure}

\medskip
\begin{rem} \label{rem:cprob} 
		One can solve the task of Combinatorial Problem \ref{cprob:1} (i.e., determine the \emph{integers} $b_{S,\nu}^x$ for all $x \in V(S)$) with the help of a computer or by drawing a sketch (see Figure \ref{fig:solve-combinatorial-problem} and the description within its caption). Unfortunately, this will take more and more computational effort as $n(S)$ and $|E(S)|$ increase.
		However, 
		note that at least
		the calculation of $b_{S,\nu}^x$ does not depend on the choice of the representative for $[(S,n)]$ if $\nu$ is adapted accordingly.
\end{rem}

The connection between Combinatorial Problem \ref{cprob:1} and the calculation of 
$p_{S,n}$ and $\bar p_{S,n}$
is established by the following lemma.

\medskip
\begin{lem}\label{lem:EofTau} For all $(S,n) \in \mathcal S_d$ with $E(S) \neq \emptyset$ and $\nu \in \mathcal V_{S,n}$ we have
	\begin{align*}
	\mathbb E\left(\frac{\tau_S^x}{\beta} \, \bigg| 
	\, A_{S,n,\nu} \right) = \frac{b_{S,\nu}^x}{n(S)+1},
	\end{align*}
	with $b_{S,\nu}^x$ given by (\ref{eq:def_bxnu}). In particular,
	\begin{align}
	p_{S,n}(d,u) =& \,\frac{1}{(n(S)+1)!} \sum_{\nu \in \mathcal V_{S,n}} u^\nu \sum_{x \in V(S)} (d-d_S^x) b_{S,\nu}^x \label{eq:pSn-2}
	\intertext{and}
	\bar p_{S,n}(d,u) =& \,\frac{1}{(n(S)+1)!} \sum_{\nu \in \mathcal V_{S,n}} u^\nu \sum_{x \in V(S)} (d-d_S^x) \bar b_{S,\nu}^x, \label{eq:pSnbar-2}
	\intertext{where for $\nu=((e_j,\star_j))_{j=1}^{n(S)}$ we set}
	u^\nu := & \,u^{|\{j : \star_j = \cross\}|} (1-u)^{|\{j : \star_j = \dbars\}|}, \notag\\
	\bar b_{S,\nu}^x :=& \,n(S)+1-b_{S,\nu}^x. \notag
	\end{align}
\begin{proof}
	To begin with, denote the positions of links on $E(S)$ by $t_1<\ldots<t_{n(S)}$ 
	and set $t_0:=0$, $t_{{n(S)}+1}:=\beta$. Moreover, fix $x \in V(S)$ and let $b^{x,j}_{S,\nu} \in \{0,1\}$, $j=0,\ldots,n(S),$ be the indicator of $\{\gamma_S \text{ contains }\{x\}\times (t_j,t_{j+1})\}$ when given $A_{S,n,\nu}$. Note that each $b^{x,j}_{S,\nu}$ is deterministic for given $S, \nu, x$ and $j$. In particular, a change of $(t_1,\ldots,t_{n(S)})$ that preserves the 
	time-ordering 
	does not change the $b^{x,j}_{S,\nu}$'s. Therefore, we find
	$b^{x,j}_{S,\nu}=b^{x,j}_{S,\nu}(X_\nu)$ with $X_\nu$ as in (\ref{eq:X-nu-def}). 
	This yields
	\begin{align*}
	\tau_S^x = \sum_{j=0}^{n(S)} b^{x,j}_{S,\nu}(X_\nu) (t_{j+1}-t_j)
	\end{align*}
	on $A_{S,n,\nu}$. Now, with respect to the conditional measure $\mathbb P(\, \cdot \, \big| \, A_{S,n,\nu})$, the vector $(t_1,\ldots,t_{n(S)})$ is
	uniformly distributed on $\{s \in \mathbb R^{n(S)} : 0<s_1<\ldots<s_{n(S)}<\beta\}$ since it is the vector of arrival times of a merged Poisson process, where the number of jumps and the assignment of these jumps to the respective subprocesses is fixed by $A_{S,n,\nu}$. Therefore, we have
	\begin{align*}
	\mathbb E\left(\frac{\tau_S^x}{\beta} \, \bigg| 
	\, A_{S,n,\nu} \right) = \sum_{j=0}^{n(S)} b_{S,\nu}^{x,j}(X_\nu) \underbrace{\mathbb E \left( \frac{t_{j+1}-t_j}{\beta} \, \bigg| \, A_{S,n,\nu} \right)}_{=\frac{1}{n(S)+1}}
	= \frac{b_{S,\nu}^x}{n(S)+1}.
	\end{align*}
	Finally, the assertions about $p_{S,n}$ and $\bar p_{S,n}$, respectively, follow if we decompose
	$ 
	A_{S,n} = \bigcup_{\nu \in \mathcal V_{S,n}} A_{S,n,\nu}
	$
	and use that $\mathbb P(A_{S,n,\nu} \, \big| \, A_{S,n}) = u^\nu \frac{n!}{n(S)!}$.
\end{proof}
\end{lem}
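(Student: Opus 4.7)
The plan is to condition on $A_{S,n,\nu}$ and exploit the fact that, on this event, the ordered positions of the $n(S)$ links on $E(S)$ are distributed as the order statistics of $n(S)$ i.i.d.\ uniform random variables on $[0,\beta)$. The crucial structural observation is that, once the edge and type of each ordered link are fixed by $\nu$, the loop configuration on $S$ depends only on the cyclic order of the link positions, not on the specific positions themselves.

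To make this precise, I would denote the ordered link positions by $0 = t_0 < t_1 < \cdots < t_{n(S)} < t_{n(S)+1} = \beta$, fix $x \in V(S)$, and for each $j \in \{0,\ldots,n(S)\}$ introduce the indicator $b^{x,j}_{S,\nu} := \mathbf{1}\{\{x\} \times (t_j,t_{j+1}) \subseteq \gamma_S\}$. Any order-preserving bijection of $\Tbeta$ that maps link positions to link positions induces a bijection of link configurations that commutes with the connectedness relation defining loops; hence $b^{x,j}_{S,\nu}$ is a deterministic function of $(S,\nu,x,j)$ only, and may be computed at the canonical configuration $X_\nu$ of \eqref{eq:X-nu-def}. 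Summing over $j$ then gives $\sum_j b^{x,j}_{S,\nu} = b^x_{S,\nu}$ in the sense of Combinatorial Problem \ref{cprob:1}. Writing $\tau_S^x = \sum_{j=0}^{n(S)} b^{x,j}_{S,\nu}\,(t_{j+1}-t_j)$ on $A_{S,n,\nu}$ and using that the $n(S)+1$ spacings of uniform order statistics are exchangeable with common mean $\beta/(n(S)+1)$, the identity $\mathbb{E}[\tau_S^x/\beta \mid A_{S,n,\nu}] = b^x_{S,\nu}/(n(S)+1)$ follows.

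For the explicit formulas for $p_{S,n}$ and $\bar{p}_{S,n}$, I would decompose $A_{S,n} = \bigsqcup_{\nu \in \mathcal{V}_{S,n}} A_{S,n,\nu}$ and compute $\mathbb{P}(A_{S,n,\nu} \mid A_{S,n}) = u^\nu \cdot n!/n(S)!$: conditional on $A_{S,n}$, the types of the ordered links are independent Bernoulli$(u)$, contributing $u^\nu$, while the edge labels (given the prescribed counts $n(e)$) form a uniformly random arrangement and contribute the multinomial factor $n!/n(S)!$. Substituting the main identity into the representation $p_{S,n}(d,u) = \tfrac{1}{n!}\sum_{x \in V(S)}(d-d_S^x)\,\mathbb{E}[\tau_S^x/\beta \mid A_{S,n}]$ stated just above the lemma (and its analogue for $\bar{p}_{S,n}$, using $\bar{b}^x_{S,\nu} = n(S)+1 - b^x_{S,\nu}$) then yields \eqref{eq:pSn-2} and \eqref{eq:pSnbar-2}.

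I expect the invariance argument in the second paragraph to be the most delicate point: one must rigorously confirm that $b^{x,j}_{S,\nu}$ is insensitive to the specific link positions, which amounts to checking from the definition of the connectedness relation that any order-preserving time reparametrization pushes loops forward to loops. Once this is granted, the rest reduces to elementary manipulations with uniform order statistics and multinomial probabilities.
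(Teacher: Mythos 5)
Your proposal follows essentially the same route as the paper's proof: introduce the indicators $b^{x,j}_{S,\nu}$, observe they depend only on the time-ordering of links and may thus be read off from the canonical configuration $X_\nu$, express $\tau_S^x$ as a linear combination of spacings of uniform order statistics with common mean $\beta/(n(S)+1)$, and then decompose $A_{S,n}$ over $\nu$ using $\mathbb{P}(A_{S,n,\nu}\mid A_{S,n})=u^\nu\,n!/n(S)!$. The argument is correct and matches the paper's in all essential steps.
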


Note that so far we excluded the case $[(S,n)]=[(S_0,n_0)]$ within the considerations in this section since the definition of $A_{S,n,\nu}$ would need clarification to make sense for $E(S)=\emptyset$. Nevertheless, Example \ref{ex:order0} shows that (\ref{eq:pSn-2}) and (\ref{eq:pSnbar-2}) remain valid if we set $\nu=(\emptyset)$ to be the empty list and $\mathcal V_{S_0,n_0}=\{\nu\}$ as well as $b^r_{S_0,\nu}=1=u^\nu$.

Before we address the proof of Theorem \ref{thrm:sharp-phase-transition} for $d=3,4$, let us present computational results for the integers $b_{S,\nu}^x$. For the sake of a concise arrangement, we define
\begin{align*}
\mathcal V_{S,n,j} := \{((e_i,\star_i))_{i=1}^{n(S)} \in \mathcal V_{S,n} : |\{i : \star_i = \dbars\}|=j\}, \qquad j=0,\ldots,n(S)
\end{align*}
and set $D_{S,n}$ to be the $2\times (n(S)+1)$-matrix for which the $k^\text{th}$ column is given by
\begin{align*}
	(D_{S,n})_k =& \sum_{\nu \in \mathcal V_{S,n,k-1}}
	\sum_{x \in V({S})} \begin{pmatrix}
		b^x_{S,\nu} \\ d_S^x \, b^x_{S,\nu} \end{pmatrix} ,\qquad k=1,\ldots,n(S)+1. %\label{eq:Dmatrixdef}
\end{align*}

Analogously, we define $\bar D_{S,n}$ but with $b^x_{S,\nu}$ replaced by $\bar b^x_{S,\nu}$.
This yields
\begin{align*}
p_{S,n}(d,u) =&\, \frac{1}{(n(S)+1)!} \left \langle D_{S,n} \mathbf{u}^{(n(S))} ,  \begin{pmatrix}
d \\ -1 \end{pmatrix} \right \rangle
\end{align*}
%\intertext{
	and the analogous equation for $\bar p_{S,n}$, where we set
%	}
\begin{align*}
\mathbf{u}^{(n(S))} :=& \, \left( u^{n(S)} , u^{n(S)-1}(1-u) , \ldots %, u (1-u)^{n_i-1}
, (1-u)^{n(S)} \right)^\textsf{T} \in [0,1]^{n(S)+1}.
\end{align*}
Note that the entries of $D_{S,n}$ and $\bar D_{S,n}$ are integers and they do not depend on the specific choice for the representative of $[(S,n)]$, see Remark \ref{rem:cprob}.
To demonstrate how to compute their entries, let us look at an example.

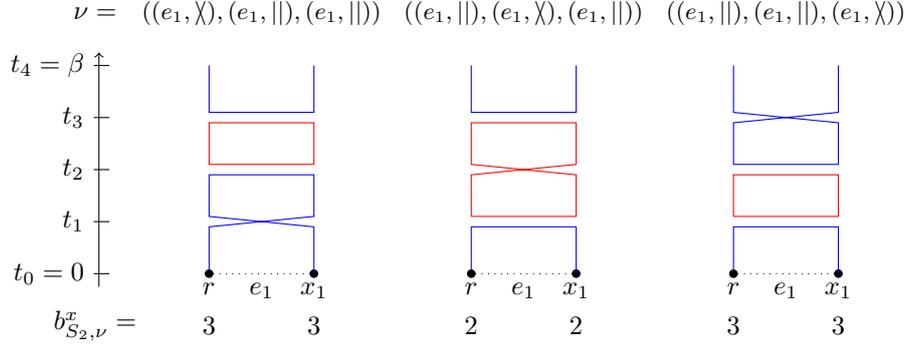
\begin{figure}
	\begin{tikzpicture}[scale=0.69]
	%Vertices at t=0
	\node[circle,scale=0.3,fill=black,draw] at (0,0) (x1) {};
	\node[circle,scale=0.3,fill=black,draw] at (2,0) (x2) {};
	\node[circle,scale=0.3,fill=black,draw] at (5,0) (x3) {};
	\node[circle,scale=0.3,fill=black,draw] at (7,0) (x4) {};
	\node[circle,scale=0.3,fill=black,draw] at (10,0) (x5) {};
	\node[circle,scale=0.3,fill=black,draw] at (12,0) (x6) {};				
	%Link positions
	%	\node[inner sep=0pt] at (0,1.9) (l1) {};
	%	\node[inner sep=0pt] at (2,1.2) (l2) {};
	%	\node[inner sep=0pt] at (4,3.1) (l3) {};
	%	\node[inner sep=0pt] at (4,0.6) (l4) {};
	%	\node[inner sep=0pt] at (2,2.3) (l5) {};
	%Edges connecting these vertices
	\draw[dotted] (x1) node[below]{$r$} -- node[below]{$e_1$} (x2) node[below]{$x_1$};
	\draw[dotted] (x3) node[below]{$r$} -- node[below]{$e_1$} (x4) node[below]{$x_1$};
	\draw[dotted] (x5) node[below]{$r$} -- node[below]{$e_1$} (x6) node[below]{$x_1$};				
	%		\draw[dotted] (x3) -- (x4);
	%		\draw[dotted] (x5) -- (x6);
	%Coordinate axes
	%		\draw[->]  (-.5,-.5) -- node[below] {$G=(V,E)$} +(1.5,0) ;
	%
	\node at (-2.1,5) {$\nu=\;$};
	\node at (-2.1,-1) {$b_{S_2,\nu}^x=\;$};
	\node (o) at (-2.1,0) {};
	\draw[->] (o)+(0,-.25) -- +(0,4.25);
	\draw (o)+(-.1,0) node[left] {$t_0=0$} -- +(.2,0);
	\draw (o)+(-.1,1) node[left] {$t_1$} -- +(.2,1);
	\draw (o)+(-.1,2) node[left] {$t_2$} -- +(.2,2);
	\draw (o)+(-.1,3) node[left] {$t_3$} -- +(.2,3);
	\draw (o)+(-.1,4) node[left] {$t_4=\beta$} -- +(.2,4);
	%Draw loops
	\draw[color=myblue] (x1) -- +(0,0.9) -- +(2,1.1) -- +(2,1.9) -- +(0,1.9) -- +(0,1.1) -- +(2,0.9) -- +(2,0);
	\draw[color=myblue] (x1)+(0,4) -- +(0,3.1) -- +(2,3.1) -- +(2,4);
	\draw[color=myred] (x1)+(0,2.1) -- +(0,2.9) -- +(2,2.9) -- +(2,2.1) -- +(0,2.1);
	\draw[color=myblue] (x3) -- +(0,0.9) -- +(2,0.9) -- +(2,0);
	\draw[color=myblue] (x3)+(0,4) -- +(0,3.1) -- +(2,3.1) -- +(2,4);
	\draw[color=myred] (x3)+(0,1.1) -- +(0,1.9) -- +(2,2.1) -- +(2,2.9) -- +(0,2.9) -- +(0,2.1) -- +(2,1.9) -- +(2,1.1) -- +(0,1.1);
	\draw[color=myblue] (x5) -- +(0,0.9) -- +(2,0.9) -- +(2,0);
	\draw[color=myblue] (x5)+(0,4) -- +(0,3.1) -- +(2,2.9) -- +(2,2.1) -- +(0,2.1) -- +(0,2.9) -- +(2,3.1) -- +(2,4);
	\draw[color=myred] (x5)+(0,1.1) -- +(0,1.9) -- +(2,1.9) -- +(2,1.1) -- +(0,1.1);
	%Label vertices with number of intervals
	\node at (0,-1) {$3$};
	\node at (2,-1) {$3$};
	\node at (5,-1) {$2$};
	\node at (7,-1) {$2$};
	\node at (10,-1) {$3$};
	\node at (12,-1) {$3$};
	%Label sequences
	\node at (1,5) {\small $((e_1,\cross),(e_1,\dbars),(e_1,\dbars))$};
	\node at (6,5) {\small $((e_1,\dbars),(e_1,\cross),(e_1,\dbars))$};
	\node at (11,5) {\small $((e_1,\dbars),(e_1,\dbars),(e_1,\cross))$};
	\end{tikzpicture}
	\caption{The three sequences $\nu \in \mathcal V_{S_3,n_3,2}$, see Example \ref{ex:matrix-entries} for a description.}
	\label{fig:entry-calc-example}
\end{figure}

\medskip
\begin{ex} \label{ex:matrix-entries}
	Consider $S_2=(\{r,x_1\},\{e_1=\{r,x_1\}\})$ and $n_2(S_2)=n_2(e_1)=3$ as well as link configurations with $j=2$ links of type $\dbars$. Then the set $\mathcal V_{S_2,n_2,2}$ consists of the three sequences $\nu$ listed on top of Figure \ref{fig:entry-calc-example}. Similar to Figure \ref{fig:solve-combinatorial-problem}, one can read off the numbers $b_{S_2,\nu}^x$ with $x \in V(S_2)$ and $\nu\in \mathcal V_{S_2,n_2,2}$ by constructing the (\textcolor{myblue}{blue}) loop $\gamma_{S_2}(X_\nu)$ 
	(see bottom line of Figure \ref{fig:entry-calc-example}). Since $d_{S_2}^r=1$ and $d_{S_2}^{x_1}=0$, the third %$3^\text{rd}$ 
	column of $D_{S_2,n_2}$ becomes 
	\begin{align*}
	(D_{S_2,n_2})_3 = \begin{pmatrix}
	3+3 & + & 2+2 & + & 3+3 \\ 3+0 & + & 2+0 & + & 3+0
	\end{pmatrix} = \begin{pmatrix}
	16 \\ 8
	\end{pmatrix}.
	\end{align*}
	All other columns of $D_{S_2,n_2}$ are determined analogously.
\end{ex} 

\begin{sidewaystable}
	\resizebox{0.9\linewidth}{!}{
		\begin{tabular}{c|c|cc|c|c|c}
			Sketch of & \multirow{2}{*}{$\ord(S,n)$}
			& \multirow{2}{*}{$n(S)$}& \multirow{2}{*}{$|V({S})|$} & \multirow{2}{*}{$\kappa_{S,n}(d)$} & \multicolumn{2}{|c}{computed results}\\
			$[(S,n)]$ & & & & & $D_{S,n}$ & $\bar D_{S,n}$ \\ \hline
			\begin{minipage}{1.5cm}\centering\begin{tikzpicture}
				\draw[opacity=0] (-.3,.2) -- (.3,.2);
				\draw[opacity=0] (-.3,-.15) -- (.3,-.15);
				\node[fill, circle, inner sep = 1pt] (r) at (0,0) {};
				\end{tikzpicture} \end{minipage} & $0$ & $0$ & $1$ & $1$ & $\displaystyle{\begin{pmatrix}
				1 \\ 0
				\end{pmatrix}}$ & $\displaystyle{\begin{pmatrix}
				0 \\ 0
				\end{pmatrix}}$ \\ \hline 
			\begin{minipage}{1.5cm}\centering\begin{tikzpicture}
				\draw[opacity=0] (-.3,.95) -- (.3,.95);
				\draw[opacity=0] (-.3,-.15) -- (.3,-.15);
				\node[fill, circle, inner sep = 1pt] (r) at (0,0) {};
				\node[fill, circle, inner sep = 1pt] (x) at (-.45,.75) {};
				\draw (r) -- node[fill=white,circle,inner sep=0pt] {$2$} (x);
				\end{tikzpicture} \end{minipage} & $1$ & $2$ & $2$ & $d$ & $\displaystyle{\begin{pmatrix}
				3 & 12 & 4 \\ 2 & 6 & 2
				\end{pmatrix}}$ & $\displaystyle{\begin{pmatrix}
				3 & 0 & 2 \\ 1 & 0 & 1
				\end{pmatrix}}$ \\ \hline
			\begin{minipage}{1.5cm}\centering\begin{tikzpicture}
				\draw[opacity=0] (-.3,.95) -- (.3,.95);
				\node[fill, circle, inner sep = 1pt] (r) at (0,0) {};
				\node[fill, circle, inner sep = 1pt] (x) at (-.45,.75) {};
				\draw (r) -- node[fill=white,circle,inner sep=0pt] {$3$} (x);
				\end{tikzpicture} \end{minipage} & $2$ & $3$ & $2$ & $d$ & $\displaystyle{\begin{pmatrix}
				8 & 24 & 16 & 4 \\ 4 & 12 & 8 & 2
				\end{pmatrix}}$ & $\displaystyle{\begin{pmatrix}
				0 & 0 & 8 & 4 \\ 0 & 0 & 4 & 2 
				\end{pmatrix}}$ \\
			\begin{minipage}{1.5cm}\centering\begin{tikzpicture}
				\draw[opacity=0] (-.3,.95) -- (.3,.95);
				\node[fill, circle, inner sep = 1pt] (r) at (0,0) {};
				\node[fill, circle, inner sep = 1pt] (x) at (-.45,.75) {};
				\node[fill, circle, inner sep = 1pt] (y) at (.45,.75) {};
				\draw (r) -- node[fill=white,circle,inner sep=0pt] {$2$} (x);
				\draw (r) -- node[fill=white,circle,inner sep=0pt] {$2$} (y);
				\end{tikzpicture} \end{minipage} & $2$ &  $4$ & $3$ & $\displaystyle{{d \choose 2}}$ & $\displaystyle{\begin{pmatrix}
				50 & 260 & 414 & 292 & 60 \\ 40 & 192 & 288 & 192 & 40 
				\end{pmatrix}}$ & $\displaystyle{\begin{pmatrix}
				40 & 100 & 126 & 68 & 30 \\ 20 & 48 & 72 & 48 & 20 
				\end{pmatrix}}$ \\ 
			\begin{minipage}{1.5cm}\centering\begin{tikzpicture}
				\draw[opacity=0] (-.3,1.7) -- (.3,1.7);
				\draw[opacity=0] (-.3,-.15) -- (.3,-.15);
				\node[fill, circle, inner sep = 1pt] (r) at (0,0) {};
				\node[fill, circle, inner sep = 1pt] (x) at (-.45,.75) {};
				\node[fill, circle, inner sep = 1pt] (y) at (-.9,1.5) {};
				\draw (r) -- node[fill=white,circle,inner sep=0pt] {$2$} (x);
				\draw (x) -- node[fill=white,circle,inner sep=0pt] {$2$} (y);
				\end{tikzpicture} \end{minipage} & $2$ & $4$ & $3$ & $d^2$ & $\displaystyle{\begin{pmatrix}
				50 & 250 & 415 & 306 & 65 \\ 39 & 190 & 287 & 206 & 44 
				\end{pmatrix}}$ & $\displaystyle{\begin{pmatrix}
				40 & 110 & 125 & 54 & 25 \\ 21 & 50 & 73 & 34 & 16
				\end{pmatrix}}$ \\ \hline
			\begin{minipage}{1.5cm}\centering\begin{tikzpicture}
				\draw[opacity=0] (-.3,.95) -- (.3,.95);
				\node[fill, circle, inner sep = 1pt] (r) at (0,0) {};
				\node[fill, circle, inner sep = 1pt] (x) at (-.45,.75) {};
				\draw (r) -- node[fill=white,circle,inner sep=0pt] {$4$} (x);
				\end{tikzpicture} \end{minipage} & $3$ & $4$ & $2$ & $d$ & $\displaystyle{\begin{pmatrix}
				5 & 40 & 40 & 20 & 4 \\ 3 & 20 & 20 & 10 & 2 
				\end{pmatrix}}$ & $\displaystyle{\begin{pmatrix}
				5 & 0 & 20 & 20 & 6 \\ 2 & 0 & 10 & 10 & 3
				\end{pmatrix}}$ \\
			\begin{minipage}{1.5cm}\centering\begin{tikzpicture}
				\draw[opacity=0] (-.3,.95) -- (.3,.95);
				\node[fill, circle, inner sep = 1pt] (r) at (0,0) {};
				\node[fill, circle, inner sep = 1pt] (x) at (-.45,.75) {};
				\node[fill, circle, inner sep = 1pt] (y) at (.45,.75) {};
				\draw (r) -- node[fill=white,circle,inner sep=0pt] {$3$} (x);
				\draw (r) -- node[fill=white,circle,inner sep=0pt] {$2$} (y);
				\end{tikzpicture} \end{minipage} & $3$ & $5$ & $3$ & $d(d-1)$ & $\displaystyle{\begin{pmatrix}
				102 & 750 & 1396 & 1348 & 624 & 104 \\ 80 & 528 & 960 & 900 & 408 & 68
				\end{pmatrix}}$ & $\displaystyle{\begin{pmatrix}
				78 & 150 & 404 & 452 & 276 & 76 \\ 40 & 72 & 240 & 300 & 192 & 52
				\end{pmatrix}}$ \\ 
			\begin{minipage}{1.5cm}\centering\begin{tikzpicture}
				\draw[opacity=0] (-.3,1.7) -- (.3,1.7);
				\node[fill, circle, inner sep = 1pt] (r) at (0,0) {};
				\node[fill, circle, inner sep = 1pt] (x) at (-.45,.75) {};
				\node[fill, circle, inner sep = 1pt] (y) at (-.9,1.5) {};
				\draw (r) -- node[fill=white,circle,inner sep=0pt] {$3$} (x);
				\draw (x) -- node[fill=white,circle,inner sep=0pt] {$2$} (y);
				\end{tikzpicture} \end{minipage} & $3$ & $5$ & $3$ & $d^2$ & $\displaystyle{\begin{pmatrix}
				114 & 774 & 1418 & 1382 & 646 & 110 \\ 92 & 560 & 982 & 932 & 426 & 72
				\end{pmatrix}}$ & $\displaystyle{\begin{pmatrix}
				66 & 126 & 382 & 418 & 254 & 70 \\ 28 & 40 & 218 & 268 & 174 & 48
				\end{pmatrix}}$ \\
			\begin{minipage}{1.5cm}\centering\begin{tikzpicture}
				\draw[opacity=0] (-.3,1.7) -- (.3,1.7);
				\node[fill, circle, inner sep = 1pt] (r) at (0,0) {};
				\node[fill, circle, inner sep = 1pt] (x) at (-.45,.75) {};
				\node[fill, circle, inner sep = 1pt] (y) at (-.9,1.5) {};
				\draw (r) -- node[fill=white,circle,inner sep=0pt] {$2$} (x);
				\draw (x) -- node[fill=white,circle,inner sep=0pt] {$3$} (y);
				\end{tikzpicture} \end{minipage} & $3$ & $5$ & $3$ & $d^2$ & $\displaystyle{\begin{pmatrix}
				84 & 696 & 1386 & 1430 & 710 & 126 \\ 62 & 482 & 950 & 980 & 490 & 88
				\end{pmatrix}}$ & $\displaystyle{\begin{pmatrix}
				96 & 204 & 414 & 370 & 190 & 54 \\ 58 & 118 & 250 & 220 & 110 & 32
				\end{pmatrix}}$ \\
			\begin{minipage}{1.5cm}\centering\begin{tikzpicture}
				\draw[opacity=0] (-.3,1.7) -- (.3,1.7);
				\node[fill, circle, inner sep = 1pt] (r) at (0,0) {};
				\node[fill, circle, inner sep = 1pt] (x) at (-.45,.75) {};
				\node[fill, circle, inner sep = 1pt] (y) at (.45,.75) {};
				\node[fill, circle, inner sep = 1pt] (z) at (-.9,1.5) {};
				\draw (r) -- node[fill=white,circle,inner sep=0pt] {$2$} (x);
				\draw (r) -- node[fill=white,circle,inner sep=0pt] {$2$} (y);
				\draw (x) -- node[fill=white,circle,inner sep=0pt] {$2$} (z);
				\end{tikzpicture} \end{minipage} & $3$ & $6$ & $4$ & $d^2(d-1)$ & $\displaystyle{\begin{pmatrix}
				1162 & 9268 & 25614 & 36910 & 28483 & 11254 & 1539 \\ 1127 & 8186 & 21292 & 29264 & 21899 & 8534 & 1182
				\end{pmatrix}}$ & $\displaystyle{\begin{pmatrix}
				1358 & 5852 & 12186 & 13490 & 9317 & 3866 & 981 \\ 763 & 3154 & 7058 & 8536 & 6451 & 2806 & 708
				\end{pmatrix}}$ \\
			\begin{minipage}{1.5cm}\centering\begin{tikzpicture}
				\draw[opacity=0] (-.3,1.7) -- (.3,1.7);
				\node[fill, circle, inner sep = 1pt] (r) at (0,0) {};
				\node[fill, circle, inner sep = 1pt] (x) at (-.45,.75) {};
				\node[fill, circle, inner sep = 1pt] (y) at (-.9,1.5) {};
				\node[fill, circle, inner sep = 1pt] (z) at (0,1.5) {};
				\draw (r) -- node[fill=white,circle,inner sep=0pt] {$2$} (x);
				\draw (x) -- node[fill=white,circle,inner sep=0pt] {$2$} (y);
				\draw (x) -- node[fill=white,circle,inner sep=0pt] {$2$} (z);
				\end{tikzpicture} \end{minipage} & $3$ & $6$ & $4$ & $\displaystyle{d\cdot {d\choose 2}}$ & $\displaystyle{\begin{pmatrix}
				1232 & 9996 & 25748 & 37964 & 28888 & 12320 & 1652 \\ 1090 & 8388 & 20918 & 29840 & 22046 & 9228 & 1258
				\end{pmatrix}}$ & $\displaystyle{\begin{pmatrix}
				1288 & 5124 & 12052 & 12436 & 8912 & 2800 & 868 \\ 800 & 2952 & 7432 & 7960 & 6304 & 2112 & 632
				\end{pmatrix}}$ \\
			\begin{minipage}{1.5cm}\centering\begin{tikzpicture}
				\draw[opacity=0] (-.3,1.7) -- (.3,1.7);
				\node[fill, circle, inner sep = 1pt] (r) at (0,0) {};
				\node[fill, circle, inner sep = 1pt] (x) at (-.45,.75) {};
				\node[fill, circle, inner sep = 1pt] (y) at (-.9,1.5) {};
				\node[fill, circle, inner sep = 1pt] (z) at (-1.35,2.25) {};
				\draw (r) -- node[fill=white,circle,inner sep=0pt] {$2$} (x);
				\draw (x) -- node[fill=white,circle,inner sep=0pt] {$2$} (y);
				\draw (y) -- node[fill=white,circle,inner sep=0pt] {$2$} (z);
				\end{tikzpicture} \end{minipage} & $3$ & $6$ & $4$ & $d^3$ & $\displaystyle{\begin{pmatrix}
				1022 & 8456 & 24204 & 36386 & 28943 & 11634 & 1615 \\ 923 & 7238 & 19986 & 28854 & 22248 & 8876 & 1245
				\end{pmatrix}}$ & $\displaystyle{\begin{pmatrix}
				1498 & 6664 & 13596 & 14014 & 8857 & 3486 & 905 \\ 967 & 4102 & 8364 & 8946 & 6102 & 2464 & 645
				\end{pmatrix}}$ \\
			\begin{minipage}{1.5cm}\centering\begin{tikzpicture}
				\draw[opacity=0] (-.3,1.2) -- (.3,1.2);
				\draw[opacity=0] (-.3,-.15) -- (.3,-.15);
				\node[fill, circle, inner sep = 1pt] (r) at (0,0) {};
				\node[fill, circle, inner sep = 1pt] (x) at (-.6,1) {};
				\node[fill, circle, inner sep = 1pt] (y) at (0,1) {};
				\node[fill, circle, inner sep = 1pt] (z) at (.6,1) {};
				\draw (r) -- node[fill=white,circle,inner sep=0pt] {$2$} (x);
				\draw (r) -- node[fill=white,circle,inner sep=0pt] {$2$} (y);
				\draw (r) -- node[fill=white,circle,inner sep=0pt] {$2$} (z);
				\end{tikzpicture} \end{minipage} & $3$ & $6$ & $4$ & $\displaystyle{{d \choose 3}}$ & $\displaystyle{\begin{pmatrix}
				1260 & 10332 & 26208 & 37764 & 27900 & 11592 & 1512 \\ 1134 & 8568 & 21186 & 29376 & 21186 & 8568 & 1134
				\end{pmatrix}}$ & $\displaystyle{\begin{pmatrix}
				1260 & 4788 & 11592 & 12636 & 9900 & 3528 & 1008 \\ 756 & 2772 & 7164 & 8424 & 7164 & 2772 & 756
				\end{pmatrix}}$ \\ 
		\end{tabular}}
		\caption{The considered prototypes of edge-weighted rooted trees $[(S,n)] \in \mathcal S$ are sorted by their order $\ord(S,n)$ and displayed together with algorithmically computed results $D_{S,n}$, $\bar D_{S,n}$. For each pair $[(S,n)]$, the values of $n(e)$ are attached to their corresponding edge $e$ and the root is always depicted as the bottom vertex.}
		\label{tab:exploration-scheme-results}
	\end{sidewaystable}

Similar to Example \ref{ex:matrix-entries}, we have determined the matrices $D_{S,n}$ for all $[(S,n)] \in \mathcal S$ with $\ord(S,n) \leq 5$ and (for $\ord(S,n) \leq 3$) they are listed within Table \ref{tab:exploration-scheme-results}.	
Together with the corresponding multiplicities $\kappa_{S,n}(d)$ that are also listed in this table, this allows us to calculate $\FK(\alpha,h,u)$ and $\FKb(\alpha,h,u)$ for $0 \leq K \leq 5$ and all $(\alpha,h,u)$. In particular, we may now compute the coefficients $\alpha_k(u)$ using (\ref{eq:alphakrecursion}) and the results are given in Table \ref{tab:alphak}. Furthermore, we may now complete the proof of Theorem \ref{thrm:sharp-phase-transition}.

\begin{proof}[Proof of Theorem \ref{thrm:sharp-phase-transition} for $d=3,4$]
	By Proposition \ref{prop:betac-characterisation}, it is sufficient to show that $\mathbb E_{\beta=d^{-1/2}}(|M_1|)>1$. Moreover, by Lemma \ref{lem:EM1-estimates}(\textit{a}), a sufficient condition for the latter statement is $F_5(d^{-1/2} d, d^{-1},u)>1$ and one sees that this holds for $d=3,4$ (compare Figure \ref{fig:d34plots}).
\end{proof}

\begin{figure}
	\centering
	\includegraphics[width=0.8\textwidth]{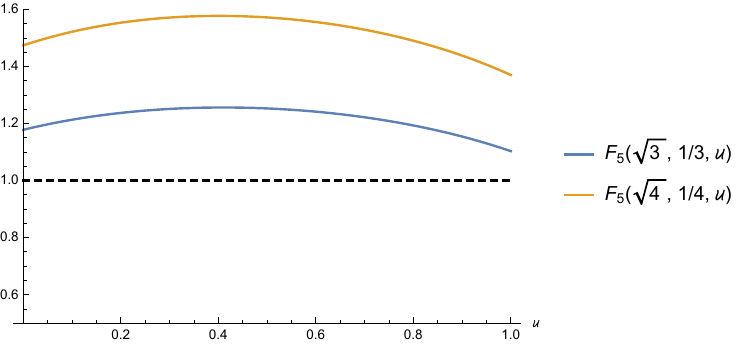}
	\caption{Plot of $F_5(d^{1/2},d^{-1},u)$ as a function of $u$ for $d=3,4$. In particular, both graphs are strictly above $1$ uniformly in $u$.}
	\label{fig:d34plots}
\end{figure}

%%
%% ************* Remark, do not remove! *************
%%
%%More precisely, we calculate
%%\begin{align}
%%& \, - 1 + 2 D \underbrace{\ln\left(1+\tfrac{1}{D}\right)}_{\geq \frac{1/D}{1/D+1}} + (D^2-1)\frac{1}{1+\tfrac{1}{D}} \frac{-1}{D^2} + \frac{1}{D} \\
%%\geq & \, \frac{1}{1/D+1} \left( -\frac{1}{D}-1 + 2 - 1 + \frac{1}{D^2} + \frac{1}{D^2} + \frac{1}{D} \right) >0.
%%\end{align}
%%Therefore, (\ref{eq:EM1greater1-1}) is monotone increasing in $d$.
%%
%% ****************************************************
%%

\medskip
\begin{rem}[Concerning a sharp phase transition for $d=2$] \label{rem:d2transition}~\\
	In Theorem \ref{thrm:sharp-phase-transition}, the case $d=2$ of the binary tree is excluded. In this boundary case we are missing two crucial properties: On the one hand, we need to find a sufficiently large $\beta^*>0$ (possibly depending on $u$) such that we can show $\mathbb E_{\beta^*}(|M_1|)>1$ for all $u$ by an appropriate estimate. On the other hand, $\beta^*$ needs to be small enough that $(0,\beta^*] \ni \beta \mapsto \mathbb E_{\beta}(|M_1|)$ is strictly increasing.\\
	Note that, for $d=2$, we would need to choose $\beta^*>d^{-1/2}$ since a numerical evaluation of $\bar F_5$ yields $\mathbb E_{\beta=d^{-1/2}}(|M_1|)\leq1$ for $d=2$ and all $u$. Unfortunately, this means that our proof of monotonicity (see Proposition \ref{prop:betac-characterisation}) fails as $f_{S_0,n_0}(\beta)$ is decreasing for $\beta>d^{-1/2}$ and thus, the representation of $\mathbb E(|M_1|)$ given in Lemma \ref{lem:EM1-rep} becomes a sum where some terms are increasing and some are decreasing. \\
	Nevertheless, up to $\beta^*=1$ and for all $[(S,n)]\in \mathcal S$ excluding $[(S_0,n_0)]$, the map $\beta \mapsto f_{S,n}(\beta)$ remains strictly increasing and numerical results suggest that $\mathbb E_\beta(|M_1|)$ remains increasing up to this value, too. Moreover, for $d=2$ and $\betaplus=1$, we find that $F_5(\betaplus d,d^{-1}, u)>1$ holds for a large range of $u$ including $u=\tfrac 12$. For the missing values of $u$ (in particular for $u=0,1$) an approximation by $\FK$ with $K=9$ should suffice to show that there also is a phase of infinite loops. 
\end{rem}

\section*{Acknowledgements}

The research of BL was supported by the Alexander von Humboldt Foundation.

%\nocite{*}
\bibliography{literatur}
\bibliographystyle{plain}
\addcontentsline{toc}{section}{References}

\appendix
\section{Analytic equations and their solutions}
Suppose that we are given an equation $f(x,y)=0$ and some $x_0,y_0 \in \mathbb R$ with $f(x_0,y_0)=0$. Then the classical implicit function theorem gives a sufficient condition such that one may find a unique solution $y=g(x)$ to this equation in a neighbourhood of $x_0$. If the function $f$ is in fact analytic, then $g$ can be shown to be analytic, too. Moreover, there exists an explicit recursion (involving the derivatives of $f$) to determine the coefficients of the series expansion of $g$ around $x_0$. 

\medskip
\begin{prop} \label{prop:analytic-solution}~\\
	Let $f \colon U \to \mathbb R$ be an analytic function in a neighbourhood $U \subseteq \mathbb R^2$ of $(x_0,y_0)\in U$. If $f(x_0,y_0)=0$ and $D_2 f(x_0,y_0) \neq 0$, then there exists a neighbourhood $V$ of $x_0$ and an analytic function $g\colon V \to \mathbb R, \, g(x)=\sum_{i=0}^\infty a_i (x-x_0)^i$ with %$g(x_0)=y_0$ and 
	$f(x,g(x))=0$ for all $x \in V$. Moreover, $a_0=y_0$ and for $k=1,2,\ldots$ we have
	\begin{align*}
	a_k = - \sum 
	\frac{(D_1^{j_0} D_2^{j_1+\ldots+j_{k-1}}% D_1^{\sum_{i=1}^{k-1}j_i} 
	f )(x_0,a_0)}{(D_2 f)(x_0,a_0)\, \prod_{i=0}^{k-1} j_i!} \prod_{i=1}^{k-1} a_i^{j_i},
	\end{align*}
	where the sum runs over all $j_0,\ldots,j_{k-1}\in \mathbb N_0$ such that
	\begin{align*}
	1 \leq \sum_{i=0}^{k-1} j_i \leq k \quad \text{and} \quad j_0 + \sum_{i=1}^{k-1} i j_i = k.
	\end{align*}
	
	\begin{proof}
	By the implicit function theorem for analytic functions (see e.g. \cite[Theorem 2.3.1]{Krantz}), there exists an analytic function $g$ in some neighbourhood $V$ of $x_0$ with $a_0=g(x_0)=y_0$ and $f(x,g(x))=0$ for all $x \in V$. Thus, on the one hand, we have
	\begin{align} \label{eq:faadibruno0}
	\frac{1}{k!} \frac{\mathrm{d}^k}{\mathrm{d}x^k} f(x,g(x)) \big |_{x=x_0} = \frac{1}{k!} \frac{\mathrm{d}^k}{\mathrm{d}x^k} 0 \big |_{x=x_0} =0
	\end{align}
	for all $k \in \mathbb N$. On the other hand, the multivariate version of Fa\`{a} di Bruno's formula (see e.g. \cite[Cor 2.11]{FaaDiBruno}) yields
	\begin{align}
	\begin{split}
	\frac{1}{k!}\frac{\mathrm{d}^k}{\mathrm{d}x^k} f(x,g(x)) \big |_{x=x_0} &= \sum_{\substack{\lambda, \mu \in \mathbb N_0 : \\ 1 \leq \lambda + \mu \leq k}} \sum_{p(k,\lambda,\mu)} D_1^\lambda D_2^\mu f(x_0,g(x_0)) \\
	& \qquad \qquad \prod_{i=1}^k \frac{\left( \operatorname{id}^{(i)}(x_0)\right)^{\ell_i}\left(g^{(i)}(x_0)\right)^{j_i} }{ \ell_i! j_i! (i!)^{\ell_i+j_i}},
	\end{split} \label{eq:faadibruno1}
	\intertext{where}
	\begin{split}
	p(k,\lambda,\mu) &= \{\ell_1,\ldots,\ell_k,j_1,\ldots,j_k \geq 0 : \\
	& \qquad  \sum_{i=1}^k \ell_i= \lambda, \sum_{i=1}^k j_i  = \mu, \sum_{i=1}^k i (\ell_i+j_i)=k\}. \notag
	\end{split}
	\end{align}
	Since $\operatorname{id}^{(i)}(0)=0$ for all $i \geq 2$, the summands in (\ref{eq:faadibruno1}) with $\ell_i>0$ for some $i \geq 2$ vanish. For all other summands we have $\ell_2=\ldots=\ell_k=0, \ell_1=\lambda-\sum_{i=2}^k \ell_i=\lambda$ and $k=\sum_{i=1}^k i(\ell_i+j_i) = \lambda + \sum_{i=1}^k i j_i$. We now use that $g(x_0)=a_0$ and $g^{(i)}(x_0) =i! \, a_i$ to obtain
	\begin{align*}
	\begin{split}
	\frac{1}{k!} \frac{\mathrm{d}^k}{\mathrm{d}x^k} f(x,g(x)) \big |_{x=x_0} 
	=&\sum_{\substack{\lambda, \mu \in \mathbb N_0 : \\ 1 \leq \lambda + \mu \leq k}} \sum_{\substack{j_1,\ldots,j_k \geq 0 : \\\sum_{i=1}^k j_i = \mu,\\\lambda + \sum_{i=1}^k i j_i = k}}  D_1^\lambda D_2^\mu f(x_0,a_0) \; \frac{1}{\lambda!}  \prod_{i=1}^k \frac{a_i^{j_i}}{j_i!} .
	\end{split}
	\label{eq:faadibruno2}
	\end{align*}
	Let us investigate those summands within the right hand side of this equation with $j_k \geq 1$. Then $k \geq k-\lambda =\sum_{i=1}^k i j_i \geq k j_k \geq k$. In particular, all these inequalities are equalities, actually. Therefore, $j_k \geq 1$ implies 
	\begin{align*}
	\lambda=0=j_1,\ldots,j_{k-1} \quad \text{and} \quad \mu=j_k=1.
	\end{align*}
	Thus, there is only one summand with $j_k \neq 0$, namely the one with these parameters and it is given by
	$
	D_2 f(x_0,a_0) \, a_k
	$.
	For all other summands we have $j_k = 0$ and, in particular, $\frac{a_k^{j_k}}{j_k!}=1$. Moreover, these other summands fulfill $\mu = \sum_{i=1}^{k-1} j_i$. Thus, by writing $j_0:=\lambda$ we find
	\begin{align*}
	\begin{split}
	\frac{1}{k!} \frac{\mathrm{d}^k}{\mathrm{d}x^k} f(x,g(x)) \big |_{x=x_0} =& D_2 f(x_0,a_0) \,a_k \\
	+& \sum_{\substack{j_0,\ldots,j_{k-1} \geq 0 : \\1 \leq j_0 + \sum_{i=1}^{k-1} j_i \leq k,\\ j_0 + \sum_{i=1}^{k-1} i j_i = k}} \frac{(D_1^{j_0} D_2^{j_1+\ldots+j_{k-1}}
		f )(x_0,a_0)}{\prod_{i=0}^{k-1} j_i!}  \prod_{i=1}^{k-1} a_i^{j_i}.
	\end{split}
	\end{align*}
	Together with (\ref{eq:faadibruno0}), this yields the assertion.
	\end{proof}
\end{prop}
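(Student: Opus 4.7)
The plan is to split the statement into two essentially independent parts: existence and analyticity of $g$, and the explicit recursion for its Taylor coefficients.

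For the first part, I would cite the real-analytic implicit function theorem (for instance, Krantz--Parks, \emph{A Primer of Real Analytic Functions}), which under the hypotheses $f(x_0,y_0)=0$ and $D_2 f(x_0,y_0)\neq 0$ produces a real-analytic function $g$ on some neighbourhood $V$ of $x_0$ with $g(x_0)=y_0$ and $f(x,g(x))\equiv 0$. Shrinking $V$ if necessary we may assume $D_2 f(x,g(x))\neq 0$ on $V$. In particular $a_0=g(x_0)=y_0$ and $g$ has the claimed power series representation on $V$.

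For the recursion, I would differentiate the identity $f(x,g(x))\equiv 0$ exactly $k$ times and evaluate at $x_0$. The left-hand side is $0$, and the right-hand side can be expanded via the multivariate Fa\`a di Bruno formula applied to the composition $f\circ \Phi$ with $\Phi(x)=(x,g(x))$. That formula produces a sum indexed by nonnegative integers $(\ell_1,\dots,\ell_k,j_1,\dots,j_k)$ subject to the constraint $\sum_i i(\ell_i+j_i)=k$, weighted by partial derivatives $D_1^\lambda D_2^\mu f(x_0,a_0)$ and by products of $\operatorname{id}^{(i)}(x_0)^{\ell_i}$ and $g^{(i)}(x_0)^{j_i}$. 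The first simplification is that $\operatorname{id}^{(i)}(x_0)=0$ for $i\geq 2$, which kills every term with $\ell_i>0$ for some $i\geq 2$; this forces $\ell_1=\lambda$ and collapses the $\ell$-part of the sum entirely, leaving only the $j_i$'s as free indices and the single ``pure $x$-derivative" exponent $\lambda=j_0$.

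The crucial algebraic step is to isolate the unique summand containing $g^{(k)}(x_0)=k!\,a_k$. Any term with $j_k\geq 1$ must satisfy $k\geq \sum_{i=1}^k i j_i\geq k j_k\geq k$, so all these inequalities are equalities: $j_k=1$, all other $j_i=0$ for $i<k$, and $\lambda=0$. That summand contributes exactly $D_2 f(x_0,a_0)\cdot a_k$. Every remaining summand has $j_k=0$, so it depends only on $a_1,\ldots,a_{k-1}$. Setting the total sum to zero and solving for $a_k$, using $D_2 f(x_0,a_0)\neq 0$, yields the stated formula after renaming $\lambda\mapsto j_0$.

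The main obstacle is bookkeeping: correctly reindexing the Fa\`a di Bruno sum, verifying that the constraints in the statement ($1\leq \sum_{i=0}^{k-1} j_i\leq k$ and $j_0+\sum_{i=1}^{k-1} i j_i=k$) are exactly what remains after removing the isolated $a_k$-term, and checking the combinatorial factor $1/\prod_i j_i!$ (the factor $(i!)^{j_i}$ from Fa\`a di Bruno cancels with the $i!$ coming from $g^{(i)}(x_0)=i!\,a_i$). This cancellation is the only place where an error is likely; everything else is a routine, albeit notationally heavy, application of the chain rule.
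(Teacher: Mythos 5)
Your proposal is correct and follows essentially the same route as the paper's proof: the analytic implicit function theorem for existence and analyticity of $g$, then $k$-fold differentiation of $f(x,g(x))\equiv 0$ expanded via the multivariate Fa\`a di Bruno formula, elimination of the $\ell_i$-terms for $i\geq 2$, isolation of the unique summand containing $a_k$ (which contributes $D_2 f(x_0,a_0)\,a_k$), and solving for $a_k$. The cancellation of $(i!)^{j_i}$ against $g^{(i)}(x_0)=i!\,a_i$ that you flag as the delicate point is exactly the bookkeeping carried out in the paper, and your isolation argument for the $j_k\geq 1$ term matches it verbatim.
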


\end{document}